\documentclass[12pt, reqno]{amsart}
\usepackage[a4paper,margin=1.15in]{geometry}
\usepackage[hidelinks]{hyperref}
\hypersetup{
pdfauthor={Xiang Fang, Feng Guo, Aman Mishra, and P. Muthukumar},
pdftitle={Critical Norm Profiles for Finite-Prime Composition Operators
on the Hardy Space of Dirichlet Series},
colorlinks=true,linkcolor=blue
}

\usepackage[backend=biber, style=ieee, sorting=nyt]{biblatex}
\usepackage{amsthm,amsmath,amssymb,mathtools}
\numberwithin{equation}{section}
\usepackage[shortlabels]{enumitem}
\usepackage{microtype}
\usepackage{caption}

\usepackage{romannum}
\usepackage{xcolor}
\usepackage{pgfplots}
\usetikzlibrary{pgfplots.fillbetween}
\pgfplotsset{compat=1.18}
\usepackage[most]{tcolorbox}
\tcbset{
  enhanced,
  breakable,
  boxrule=0.25mm,
  left=6pt,right=6pt,top=1pt,bottom=1pt,
  arc=0mm
}

\newtheorem{theorem}{Theorem}[section]
\newtheorem{lemma}[theorem]{Lemma}
\newtheorem{proposition}[theorem]{Proposition}
\newtheorem{corollary}[theorem]{Corollary}

\begin{document}
\pagenumbering{arabic}
\setcounter{page}{1}
\title[Critical norm profiles for finite-prime composition operators]{Critical Norm Profiles for Finite-Prime Composition Operators on the Hardy Space of Dirichlet Series}

\author[X. Fang]{Xiang Fang}
\address{Xiang Fang, Department of Applied Mathematics, National Yang Ming Chiao Tung University, Hsinchu, Taiwan (R.O.C.).}
\email{\textcolor[rgb]{0.00,0.00,0.84}{xfang@nycu.edu.tw}}

\author[F. Guo]{Feng Guo}
\address{Feng Guo, School of Mathematics, Nanjing University of Aeronautics and Astronautics, Nanjing 210016, P. R. China.}
\email{\textcolor[rgb]{0.00,0.00,0.84}{70207994@nuaa.edu.cn}}

\author[A. Mishra]{Aman Mishra}
\address{Aman Mishra, Department of Mathematics, Indian Institute of Technology, Kanpur - 208016, India.}
\email{\textcolor[rgb]{0.00,0.00,0.84}{aamanmishra121@gmail.com, amanr24@iitk.ac.in}}

\author[P. Muthukumar]{P. Muthukumar}
\address{P. Muthukumar, Department of Mathematics, Indian Institute of Technology, Kanpur - 208016, India.}
\email{\textcolor[rgb]{0.00,0.00,0.84}{pmuthumaths@gmail.com, muthu@iitk.ac.in}}

\subjclass{Primary 47B33, 47B35, 47B38; Secondary  11M36, 05A10.}

\keywords{Composition operator, Hardy-Dirichlet space, Zeta function, Hankel operator.}

\begin{abstract}
We identify the critical boundary operator-norm profile of finite-prime composition operators on the Hardy--Hilbert space \(\mathcal H^2\) of Dirichlet series. For
\[
\varphi_{\delta,\boldsymbol\rho}(s)
=
\frac12+\delta
+
\delta\sum_{j=1}^d\rho_jp_j^{-s},
\qquad
\boldsymbol\rho\in B_d,
\]
the renormalized positive coefficient operators converge uniformly in operator norm, with \(O(\delta)\) error, to an explicit multivariate weighted Hankel operator \(\mathcal H_{\boldsymbol\rho}\); consequently,
\[
2\delta\|C_{\varphi_{\delta,\boldsymbol\rho}}\|^2
=
\|\mathcal H_{\boldsymbol\rho}\|
+
O(\delta)
\]
uniformly over \(B_d\). We show that the limiting operator admits the total-degree reduction
\[
\mathcal H_{\boldsymbol\rho}
\simeq
D_{\boldsymbol\rho}
H_{R_{\boldsymbol\rho}/2}
D_{\boldsymbol\rho}\oplus\mathbf{0},
\]
where the diagonal factors are convolution-collision norms of the normalized prime weights. This structure, together with the affine comparison principle of Brevig and Perfekt, yields an explicit concentration inequality for \(\|\mathcal H_{\boldsymbol\rho}\|\), identifies the one-prime configurations as the exact equality cases in the limiting norm estimate, and gives a quantitative deficit away from them. For fixed \(\sigma>\frac12\),  we also obtain a second-order expansion of the squared
norm and fully finite-dimensional approximations with explicit total-degree and Dirichlet-sum truncation errors. Together, these results show that a single coefficient-operator structure governs the singular boundary profile, the fixed-\(\sigma\) perturbative regime, and certified finite-dimensional approximation.
\end{abstract}

\maketitle
\vspace{-1cm}
\tableofcontents

\section{Introduction}

\subsection{The norm problem and the finite-prime model}

The Hardy--Hilbert space of Dirichlet series is
\[
\mathcal H^2
=
\left\{
f(s)=\sum_{n\geq1}a_n n^{-s}:
\sum_{n\geq1}|a_n|^2<\infty
\right\},
\]
equipped with its coefficient norm. It is a reproducing-kernel Hilbert
space on the half-plane
\[
\mathbb C_{1/2}
=
\{s\in\mathbb C:\operatorname{Re}s>1/2\},
\]
with reproducing kernel determined by the Riemann zeta function. The interaction between the Hilbert-space structure of $\mathcal H^2$
and unique prime factorization of natural numbers makes composition on $\mathcal H^2$
markedly different from composition on the classical one-variable
Hardy spaces: the symbol controls both analytic mapping and the arithmetic
organization of the coefficients.

The foundational work of Hedenmalm, Lindqvist, and Seip established the
basic function-space theory of $\mathcal H^2$, while Gordon and
Hedenmalm characterized the analytic symbols that induce bounded
composition operators on $\mathcal H^2$; see \cite{hls,GH,quef}. Given
such a symbol $\varphi:\mathbb C_{1/2} \to \mathbb C_{1/2}$, the associated composition operator $C_\varphi$ is defined by
\[
C_\varphi f=f\circ\varphi,
\qquad f\in\mathcal H^2.
\]
Once boundedness is understood, a finer
operator-theoretic problem remains: determine, or at least describe
sharply, the norm of $C_\varphi$.  In the zero-characteristic case this norm is sensitive not only to the range of
the symbol but also to the distribution of its Dirichlet coefficients.
The boundedness classification therefore provides the admissible class,
but it does not determine the norm geometry within that class.

In this paper, we consider finite-prime  symbols
\[
\varphi(s)
=
\sigma+\sum_{j=1}^d r_jp_j^{-s},
\]
where $p_1,\ldots,p_d$ are distinct primes. Although the symbol involves only finitely many primes,
the associated composition operator acts on the infinite-dimensional
space $\mathcal H^2$. The distinct primes lead naturally to independent
multi-index coordinates, and the image of a Dirichlet polynomial gives
rise to a coefficient operator from
$\ell^2(\mathbb N)$ to $\ell^2(\mathbb N_0^d)$. This finite-prime model
provides a convenient setting in which the distribution of the
coefficients can be studied explicitly.

We study three regimes of the same norm problem.  In the critical regime,
\(\sigma=\frac12+\delta\) and \(r_j=\delta\rho_j\), with the normalized
direction \(\boldsymbol\rho\) fixed as \(\delta\downarrow0\).  In the
perturbative regime, \(\sigma>\frac12\) is fixed and
\(R=\sum_jr_j\downarrow0\).  Finally, we develop finite-dimensional
approximations of the norm from the same  coefficient operator.  The central
question is whether the singularly normalized critical family has an
operator-norm limit and, if so, how that limit records the distribution of
mass among the prime coordinates.  This asks for more than the leading
growth rate of a scalar norm: it asks for the positive operator that
survives after renormalization.  The fixed-\(\sigma\) expansion and
finite-section theorem arise from the same coefficient realization, so the
three regimes test the same operator model at a boundary scale, at an
interior perturbative scale, and through effective finite-dimensional
truncation.
\subsection{Earlier work and the precise gap}

Composition operators on $\mathcal H^2$ have been studied from several
complementary directions. Bayart initiated a systematic study of the
zero-characteristic case, and subsequent work has addressed compactness,
approximation numbers, mean counting functions, Schatten classes, and
related questions; see, for example,
\cite{Ba,Ba2,CQV,BP2,AP,AK,BA}. For operator norms, Brevig obtained
sharp estimates in a substantial range, with a subsequent corrigendum,
while Muthukumar, Ponnusamy, and Queff\'elec established upper and lower
bounds for the one-prime family; see
\cite{Brevig,Brevig2023,MPQ}. These results show that the
norm problem has a developed quantitative theory, even though exact
formulas remain unavailable for broad zero-characteristic families.  They
also indicate why asymptotic and comparison questions are natural: the
reproducing-kernel bounds detect the boundary singularity, while the
coefficient distribution is not visible from the center and total radius
alone.  The present paper is therefore positioned within an existing norm
theory, rather than as the first investigation of these operators.

The closest predecessor to the present work is the affine
subordination principle of Brevig and Perfekt \cite{BP1}. For the
finite-prime symbol
\[
\varphi(s)
=
\sigma+\sum_{j=1}^d r_jp_j^{-s},
\]
they proved
\[
\|C_\varphi\|
\leq
\|C_\psi\|,
\]
where
\[
\psi(s)
=
\sigma+
\left(\sum_{j=1}^d r_j\right)2^{-s}.
\]
Thus, at fixed center $\sigma$ and fixed total coefficient mass
$R=\sum_{j=1}^d r_j,$
concentration on a single prime is extremal for the operator norm. In particular,
one-prime coefficient vectors, up to the choice of prime, are the
maximizing configurations in that fixed-parameter problem. A central question in the present paper is how this concentration phenomenon is reflected in the asymptotic operator structure near the critical boundary.

What is missing from the fixed-parameter theory is the singular operator
produced when the center approaches the boundary and all coefficients
vanish on the same scale.  Scalar subordination does not identify that
operator, establish uniform operator-norm convergence to it, or reveal how
its multivariate coefficient matrix decomposes.  A related orthogonal
decomposition was developed by Brevig and Perfekt \cite{BP2022} for
positive-characteristic symbols \(c_0s+\varphi_0(s)\), organized by
multiplicative prime support and used for approximation numbers and
compactness.  Our setting is different: the characteristic is zero, the
boundary limit is singular, and the relevant decomposition is by total
degree in the limiting operator.  The purpose of the paper is to
identify this critical operator profile and analyze the geometry it
retains.  In particular, we seek an operator-level limit that is uniform
over the normalized coefficient simplex, a structural reduction that makes
its norm accessible, and a precise account of how the fixed-parameter
concentration principle appears in the limit.

\subsection{Main results and their interpretation}

We now describe the main results. In the critical regime, set
\[
\sigma=\frac12+\delta,
\qquad
r_j=\delta\rho_j,
\]
and let
\[
B_d
=
\left\{
\boldsymbol\rho=(\rho_1,\ldots,\rho_d)\in[0,1]^d:
\rho_1+\cdots+\rho_d\leq1
\right\}.
\]
Thus
\[
\varphi_{\delta,\boldsymbol\rho}(s)
=
\frac12+\delta
+
\delta\sum_{j=1}^d\rho_jp_j^{-s}.
\]
Let $T_{\delta,\boldsymbol\rho}$ denote the corresponding coefficient
operator and put
\[
A_{\delta,\boldsymbol\rho}
=
T_{\delta,\boldsymbol\rho}
T_{\delta,\boldsymbol\rho}^*.
\]
The scaling by $\delta=\sigma-\frac12$ isolates the parameter governing
the approach to the critical boundary while keeping the normalized
coefficient vector fixed.

Our first main result, Theorem~\ref{main}, establishes a uniform operator-norm approximation to the limiting positive operator $\mathcal H_{\boldsymbol\rho}$. More precisely, as $\delta\downarrow0$,
\[
\sup_{\boldsymbol\rho\in B_d}
\left\|
2\delta A_{\delta,\boldsymbol\rho}
-
\mathcal H_{\boldsymbol\rho}
\right\|
=
O(\delta),
\]
and consequently
\[
\sup_{\boldsymbol\rho\in B_d}
\left|
2\delta
\|C_{\varphi_{\delta,\boldsymbol\rho}}\|^2
-
\|\mathcal H_{\boldsymbol\rho}\|
\right|
=
O(\delta).
\]
Thus the critical regime admits a uniform operator-level limit, and the
renormalized norm is governed by the norm of a concrete limiting
 operator $\mathcal H_{\boldsymbol\rho}$.  This is the principal asymptotic statement of the paper: it identifies the entire critical profile, rather than only the value of its norm along selected directions.
 The limiting norm satisfies
\[
\|\mathcal H_{\boldsymbol\rho}\|
\le
\frac{2}{1+\sqrt{1-R_{\boldsymbol\rho}^2}}.
\]
Using the affine subordination principle of Brevig and Perfekt together
with the critical asymptotic above, we obtain a quantitative lower bound
for the gap in this estimate and characterize its equality cases: equality
holds if and only if at most one component of
$\boldsymbol\rho$ is nonzero.  In this way, the critical limit retains a quantitative form of the concentration
phenomenon underlying affine subordination.

We then turn to the structure of the limiting operator $\mathcal H_{\boldsymbol\rho}$ which is described in
Theorem~\ref{thm:boundary-Hankel-reduction}. Writing
$R_{\boldsymbol\rho}
= \rho_1+\cdots+\rho_d,$
we show that, when $R_{\boldsymbol\rho}>0$, the multivariate operator
$\mathcal H_{\boldsymbol\rho}$ admits a decomposition, up to unitary
equivalence and a diagonal transformation, of the form
\[
\mathcal H_{\boldsymbol\rho}
\simeq
D_{\boldsymbol\rho}
H_{R_{\boldsymbol\rho}/2}
D_{\boldsymbol\rho}
\oplus\mathbf 0,
\]
where the weighted Hankel operator
$H_{R_{\boldsymbol\rho}/2}$ is studied in Section~\ref{Sec:2}. The
reduction is organized by total degree and converts the multivariate
critical problem into a one-variable weighted Hankel model while
retaining information about the distribution of
$\boldsymbol\rho$ among the prime coordinates. The diagonal coefficients of
$D_{\boldsymbol\rho}$ admit a convolution-collision representation in
terms of the normalized prime weights. In the genuinely multivariate
case these coefficients tend to zero, and hence
$\mathcal H_{\boldsymbol\rho}$ is compact.

The same coefficient realization also yields a regular perturbative
result away from the boundary. For fixed $\sigma>\frac12$ and
\[
R=\sum_{j=1}^d r_j\downarrow0,
\]
Theorem~\ref{T:finite-prime-fixed-sigma-expansion} gives
\[
\left\|
C_{\sigma+\sum_{j=1}^d r_jp_j^{-s}}
\right\|^2
=
\zeta(2\sigma)
+
\frac{\zeta'(2\sigma)^2}{\zeta(2\sigma)}
\sum_{j=1}^d r_j^2
+
O(R^4).
\]
Thus the first nonconstant term depends on the quadratic mass
\[
\sum_{j=1}^d r_j^2,
\]
rather than only on the total mass $R$. The norm therefore detects the
distribution of the prime coefficients already at second order. This
regular perturbative expansion complements the singular critical
asymptotics by describing the small-coefficient behavior at a fixed
interior point of the half-plane.

Finally, Section~\ref{sec:5} develops finite-dimensional approximations
of the norm with explicit total error bounds.  Let
$\lambda_{N,M}(\sigma,\boldsymbol r)$ denote the resulting
finite-dimensional approximation. Theorem~\ref{thm:full-error-estimate}
shows that, whenever
\[
2R<\varepsilon<2\sigma-1,
\]
\[
0
\leq
\|C_{\varphi}\|^2
-
\lambda_{N,M}(\sigma,\boldsymbol r)
\leq
E_N(\sigma,\boldsymbol r,\varepsilon)
+
\eta_{N,M}(\sigma,\boldsymbol r,\varepsilon),
\]
where
\[
E_N(\sigma,\boldsymbol r,\varepsilon)
=
\zeta(2\sigma-\varepsilon)
\frac{(4R^2/\varepsilon^2)^N}
{1-4R^2/\varepsilon^2},
\]
and
\[
\eta_{N,M}(\sigma,\boldsymbol r,\varepsilon)
=
\left(
\sum_{j=0}^{N-1}
\frac{(2j)!}{\varepsilon^{2j}}
a_j(\boldsymbol r)^2
\right)
\frac{M^{-(2\sigma-\varepsilon-1)}}
{2\sigma-\varepsilon-1}.
\]
Thus, the norm of $C_{\varphi}$ can be
approximated by finite-dimensional matrices with a fully explicit
and rigorous error bound.

\subsection{Proof strategy, scope, and organization}

The proof begins with the coefficient realization of $C_\varphi$ and the
associated positive operator
$T_{\sigma,\boldsymbol r}T_{\sigma,\boldsymbol r}^*$, whose matrix
entries are expressed in terms of derivatives of the zeta function.
Under the critical scaling, the pole of the zeta function determines the
leading term $\mathcal H_{\boldsymbol\rho}$, while the remainder is
controlled uniformly by Cauchy estimates and Hilbert--Schmidt norm
bounds. This yields the uniform operator-norm approximation in
Theorem~\ref{main}. The Brevig--Perfekt affine subordination principle,
combined with this critical asymptotic and the explicit one-prime
limit, then gives the quantitative gap estimate and the complete
characterization of the equality cases. We subsequently analyze the
structure of $\mathcal H_{\boldsymbol\rho}$ by decomposing it according
to total degree, which leads to the weighted Hankel model and the
convolution representation of the diagonal coefficients.

For the fixed-$\sigma$ regime, the same coefficient realization gives an
absolutely convergent rank-one expansion of
$T_{\sigma,\boldsymbol r}^*T_{\sigma,\boldsymbol r}$. Separating the
degree-zero and degree-one contributions yields the second-order
perturbative expansion, while the remaining degrees are controlled by a
geometric majorant, giving the $O(R^4)$ remainder. The same majorant,
applied to the tail beyond a prescribed total degree, yields the
certified finite-section estimate. A further truncation of the
Dirichlet-series sums gives the fully finite approximation and its
explicit total error bound.

Our results concern finite-prime, zero-characteristic symbols with fixed
finite $d$. In the critical regime, the normalized direction
$\boldsymbol\rho$ is fixed along each family, although the estimates are
uniform over $B_d$; in the perturbative regime, $\sigma>\frac12$ is
fixed. We do not obtain a closed formula for the norm of a general
finite-prime symbol, nor do we treat arbitrary symbols in the
Gordon--Hedenmalm class. Regimes in which the number of prime variables
grows, or in which the coefficients approach the critical boundary on
different asymptotic scales, are outside the scope of the present work.

This paper is organized as follows. Section~\ref{Sec:2} establishes the
weighted Hankel benchmark. Section~\ref{sec:3} develops the coefficient
realization, the uniform critical operator limit, the limiting
concentration inequality, the total-degree reduction, and the collision
profile. Section~\ref{sec:4} proves the fixed-\(\sigma\) squared-norm
expansion. Finally, Section~\ref{sec:5} constructs fully finite-dimensional
approximations with explicit total-degree and Dirichlet-sum truncation
errors.

\section{Norm of weighted Hankel operators} \label{Sec:2}
The theory of Hankel operators is well studied, and much literature is available on this topic; see \cite{vv}. Here we consider a weighted variant of the Hankel operator.

For $0 \leq t \leq \tfrac{1}{2}$, define an operator $H_t : \ell^2(\mathbb{N}_0) \to \ell^2(\mathbb{N}_0)$ by
\[
(H_t x)_j = \sum_{k=0}^{\infty} \binom{j+k}{j} t^{j+k} x_k, \quad j \geq 0.
\]
The matrix entries of $H_t$ depend only on $j+k$, up to the binomial weight, and thus $H_t$ can be viewed as a weighted Hankel operator.

Our next objective is to investigate the boundedness of $H_t$. To this end, we first recall some useful results concerning weighted composition operators on the Hardy space $H^2$ of the open unit disk $\mathbb{D}$, consisting  of power series with square summable coefficients, which will play a crucial role in establishing the boundedness of $H_t$.

Given analytic functions $\phi : \mathbb{D} \to \mathbb{D}$ and $\psi : \mathbb{D} \to \mathbb{C}$, the weighted composition operator $W_{\psi,\phi}$ is defined by
\[
(W_{\psi,\phi}f)(z) = M_{\psi}(C_{\phi}f)(z)=\psi(z) f(\phi(z)), \qquad f \in H^2.
\]
If $\phi$ is the identity map, the weighted composition operator becomes a multiplication operator $M_{\psi}.$
In the classical setting, it is well known that the composition operator $C_\phi$ is bounded on $H^2$ if  $\phi$ is an analytic self-map of $\mathbb{D}$ (see Section 1.3 in \cite{shap}). Moreover, the multiplication operator $M_\psi$ is bounded on $H^2$ for every bounded analytic function $\psi$ on $\mathbb{D}$ (see Page~11 in \cite{shap}).

 The boundedness and norm of weighted composition operators are not well understood. Although various sufficient conditions and partial characterizations for boundedness are known, a simple or explicit necessary and sufficient condition  analogous to the case of composition operators is not known in general. In particular, no explicit formula for the operator norm of $W_{\psi,\phi}$ on $H^2$ is available in full generality, and obtaining sharp norm estimates remains a challenging problem. In  Corollary \ref{cor:NWCO},  we obtain explicit formulas for the norm of $W_{\psi,\phi}$ for certain classes of symbols $\phi$ and weights $\psi$.
 One may refer to \cite{shap,cow} for basic information about the composition operators on the Hardy space over $\mathbb{D}$.

\begin{proposition}\label{prop: hbd}
    For $0\leq t\leq \frac{1}{2},$ the operator $H_t$ is positive and bounded  on $\ell^2(\mathbb{N}_0),$ and hence self-adjoint.
\end{proposition}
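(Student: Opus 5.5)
The plan is to factor $H_t$ as $C_\phi^*C_\phi$ for an explicit composition operator $C_\phi$ on $H^2$, identifying $\ell^2(\mathbb N_0)$ with $H^2$ through Taylor coefficients. Positivity, boundedness and self-adjointness then all follow at once from the boundedness of composition operators with analytic self-maps of $\mathbb D$ recalled above.

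\emph{Step 1 (Vandermonde factorization of the entries).} By Vandermonde's identity,
\[
\binom{j+k}{j}=\sum_{m=0}^{\min(j,k)}\binom{j}{m}\binom{k}{k-m}=\sum_{m\ge0}\binom{j}{m}\binom{k}{m}.
\]
Hence
\[
\binom{j+k}{j}t^{j+k}=\sum_{m\ge0}B_{jm}B_{km},
\qquad B_{jm}:=\binom{j}{m}t^j\ \ (B_{jm}=0 \text{ for } m>j).
\]
Formally this gives $H_t=BB^{T}$, where $B$ is a real lower-triangular matrix.

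\emph{Step 2 (identifying $B^T$ as a composition operator).} Let $f(z)=\sum_j x_jz^j\in H^2$ and set $\phi(z)=t(1+z)$. Expanding,
\[
f(\phi(z))=\sum_j x_j t^j(1+z)^j=\sum_m z^m\sum_{j\ge m}\binom{j}{m}t^jx_j .
\]
So, in coefficients, $B^T$ is exactly $C_\phi$. For $0\le t\le\frac12$ we have $|\phi(z)|\le t|1+z|<2t\le1$ on $\mathbb D$. Thus $\phi$ is an analytic self-map of $\mathbb D$, and $C_\phi$ is bounded on $H^2$ by the classical result cited above. The case $t=\frac12$, where $\phi(z)=\frac{1+z}{2}$ touches the boundary at $1$, is still covered. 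The case $t=0$ is trivial: $H_0$ is the rank-one projection onto $e_0$.

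\emph{Step 3 (operator identity and conclusion).} Because all entries are real, the adjoint $C_\phi^*$ has matrix $B$. We therefore expect $H_t=C_\phi^*C_\phi$ as operators. To justify this, compute for finitely supported $x,y$:
\[
\langle C_\phi x,C_\phi y\rangle=\sum_m\Big(\sum_j B_{jm}x_j\Big)\overline{\Big(\sum_k B_{km}y_k\Big)}=\sum_{j,k}\binom{j+k}{j}t^{j+k}x_j\overline{y_k}.
\]
The interchange of summations is legitimate by Tonelli/Fubini, since all $B_{jm}\ge0$ and, for finitely supported vectors, each inner sum is finite. This shows that the matrix $H_t$ defines, on finitely supported vectors, the bounded positive form $\langle C_\phi x,C_\phi y\rangle$. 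Consequently $H_t$ extends to the bounded positive operator $C_\phi^*C_\phi$, with $\|H_t\|=\|C_\phi\|^2$.

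It remains to check that the series defining $(H_tx)_j$ converges and agrees with this extension for every $x\in\ell^2$. Since $(H_tx)_j=\langle C_\phi x, C_\phi e_j\rangle$, this follows by continuity from the finitely supported case. Self-adjointness is then automatic for a bounded positive operator.

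\emph{Main obstacle.} The only delicate point is this last rigor step: passing from the formal matrix identity $H_t=BB^T$ to a genuine operator identity on all of $\ell^2$, especially at the endpoint $t=\frac12$ where the matrix entries decay slowly. The nonnegativity of the entries is what makes the interchange arguments clean.
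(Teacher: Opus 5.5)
Your proof is correct. It differs from the paper's in how boundedness is obtained.

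The paper proceeds in two separate steps. First it sums the binomial generating function to get $H_tf=w_t\,(f\circ\phi_t)$, with $w_t(z)=\frac{1}{1-tz}$ and $\phi_t(z)=\frac{t}{1-tz}$; this needs an absolute-convergence check to justify the interchange, and boundedness then follows from $H_t=M_{w_t}C_{\phi_t}$. Only afterwards does it use Vandermonde's identity to write $\langle H_tx,x\rangle=\sum_{\ell}\bigl|\sum_{j\ge\ell}\binom{j}{\ell}t^jx_j\bigr|^2$ for finitely supported $x$, and it extends positivity by density.

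That sum of squares is exactly $\|C_\phi x\|^2$ for your $\phi(z)=t(1+z)$. You recognize it as a composition operator, so the single identity $H_t=C_\phi^*C_\phi$ gives boundedness, positivity, self-adjointness and $\|H_t\|=\|C_\phi\|^2$ at once. The only things to justify are finite sums and the continuity argument for the row series. You handle the latter correctly: the $j$th row of $H_t$ is the coefficient sequence of $C_\phi^*C_\phi e_j\in\ell^2$. The endpoint $t=\frac12$ needs no special care in your argument.

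The two factorizations are the same operator. Checking on reproducing kernels gives $C_\phi^*f(z)=\frac{1}{1-tz}f\bigl(\frac{tz}{1-tz}\bigr)$, hence $C_\phi^*C_\phi=M_{w_t}C_{\phi_t}$.

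The paper's form buys the explicit action $H_tf=w_t\,(f\circ\phi_t)$. The paper uses this immediately to find the eigenvector $(a^k)$ in Theorem~\ref{Thm:HO} and to state Corollary~\ref{cor:NWCO}. Your form buys economy and a direct link to affine composition operators. Combined with Cowen's norm formula for affine symbols, $\|C_{sz+t}\|^2=\frac{2}{1+|s|^2-|t|^2+\sqrt{(1-|s|^2+|t|^2)^2-4|t|^2}}$, taken at $s=t$, it would give $\|H_t\|=\frac{2}{1+\sqrt{1-4t^2}}$ (Theorem~\ref{Thm:HO}) directly.
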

\begin{proof}
  For each  $x=(x_k)\in \ell^2(\mathbb{N}_0)$, we identify with
$f(z)=\sum_{k=0}^{\infty} x_k z^k \in H^2(\mathbb{D}).$
Then, under this identification, $H_t$ can be viewed as an operator on $H^2(\mathbb{D})$ as follows:
\[
({H}_t f)(z)
=\sum_{j=0}^{\infty} ({H}_t x)_j z^j
= \sum_{j=0}^{\infty} \sum_{k=0}^{\infty} \binom{j+k}{j} t^{j+k} x_k z^j.
\]
We first justify the interchange of summation. For $|z|<1$,
\[
\sum_{j,k\ge0} \left|\binom{j+k}{j} t^{j+k} x_k z^j\right|
= \sum_{k=0}^{\infty} |x_k| t^k \sum_{j=0}^{\infty} \binom{j+k}{j} (t|z|)^j.
\]
Using
\[
\sum_{j=0}^{\infty} \binom{j+k}{j} r^j = \frac{1}{(1-r)^{k+1}}, \quad |r|<1,
\]
we obtain
\[
\sum_{j,k} \left|\binom{j+k}{j} t^{j+k} x_k z^j\right|
= \frac{1}{1-t|z|} \sum_{k=0}^{\infty} |x_k|
\left(\frac{t}{1-t|z|}\right)^k.
\]
Since $t\leq \frac{1}{2}$, one has $\frac{t}{1-t|z|}<1$. As $x\in \ell^2$, it follows from the Cauchy--Schwarz inequality that $\sum_{j,k\ge0} \left|\binom{j+k}{j} t^{j+k} x_k z^j\right|< \infty$. Hence the series is absolutely convergent, and we may interchange the sums:
\[
({H}_t f)(z)
= \sum_{k=0}^{\infty} x_k  t^k
\sum_{j=0}^{\infty} \binom{j+k}{j} (tz)^j.
\]
Using the same identity with $r=tz$, we obtain
\[
\sum_{j=0}^{\infty} \binom{j+k}{j} (tz)^j
= \frac{1}{(1-tz)^{k+1}},
\]
hence
\[
({H}_t f)(z)
= \sum_{k=0}^{\infty} x_k t^k \frac{1}{(1-tz)^{k+1}}
= \frac{1}{1-tz} \sum_{k=0}^{\infty} x_k \left(\frac{t}{1-tz}\right)^k.
\]
Therefore
\[
({H}_t f)(z)
= \frac{1}{1-tz} f\!\left(\frac{t}{1-tz}\right).
\]
Define
\[
{\phi}_t(z)=\frac{t}{1-tz}, \qquad
{w}_t(z)=\frac{1}{1-tz}.
\]
For $|z|<1$,
\[
|{\phi}_t(z)|=\frac{t}{|1-tz|}
\le \frac{t}{1-t|z|}<\frac{t}{1-t}\le 1,
\]
so ${\phi}_t(\mathbb{D})\subset \mathbb{D}$ and also analytic. Moreover,
\[
|{w}_t(z)|\le \frac{1}{1-t|z|}<\frac{1}{1-t}\le 2,
\]
hence ${w}_t$ is a bounded and also analytic function.\\
Since
\[
({H}_t f)(z)={w}_t(z) f({\phi}_t(z)),
\]
where ${\phi}_t$ is an analytic self-map of $\mathbb{D}$ and ${w}_t$ is a bounded analytic function on $\mathbb{D}$, it follows that ${H}_t = M_{{w}_t} C_{{\phi}_t}$ is bounded on $H^2(\mathbb{D})$.\\
For finitely supported $x$, Vandermonde's identity gives
\[
\binom{j+k}{j}
=
\sum_{\ell\geq0}
\binom{j}{\ell}\binom{k}{\ell}.
\]
Consequently,
\[
\begin{aligned}
\langle H_tx,x\rangle
&=
\sum_{j,k\geq0}
\binom{j+k}{j}t^{j+k}x_k\overline{x_j}\\
&=
\sum_{\ell\geq0}
\left|
\sum_{j\geq\ell}
\binom{j}{\ell}t^jx_j
\right|^2
\geq0.
\end{aligned}
\]
Since finitely supported sequences are dense in
$\ell^2(\mathbb N_0)$ and $H_t$ is bounded, it follows that
\[
\langle H_tx,x\rangle\geq0,
\qquad x\in\ell^2(\mathbb N_0).
\]
Thus, $H_t$ is positive.
\end{proof}
Since $H_t$ is symmetric, we will use the following symmetric form of
Schur's test to estimate its norm; see \cite[p.~24]{hal}.

\begin{lemma}(\textbf{Schur's test})
  \label{lem:Schur}
Let \(A=(a_{jk})_{j,k\geq0}\) be a symmetric matrix with nonnegative
entries.  Suppose that there exist positive numbers \(p_k\) and a
constant \(M>0\) such that
\[
\sum_{k=0}^{\infty}a_{jk}p_k
\leq
Mp_j
\qquad(j\geq0).
\]
Then \(A\) defines a bounded self-adjoint operator on
\(\ell^2(\mathbb N_0)\) and
\[
\|A\|\leq M.
\]
\end{lemma}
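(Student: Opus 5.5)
We prove Schur's test by the standard weighted Cauchy--Schwarz argument, working first on finitely supported vectors and then extending by density.

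\textbf{Step 1: the pointwise bound.} Fix a finitely supported \(x\in\ell^2(\mathbb N_0)\). For each \(j\) the sum \((Ax)_j=\sum_k a_{jk}x_k\) is finite. Split \(a_{jk}=\sqrt{a_{jk}p_k}\cdot\sqrt{a_{jk}/p_k}\), which is legitimate since \(a_{jk}\ge0\) and \(p_k>0\). Cauchy--Schwarz then gives
\[
|(Ax)_j|^2\le\Big(\sum_k a_{jk}p_k\Big)\Big(\sum_k a_{jk}\frac{|x_k|^2}{p_k}\Big)\le Mp_j\sum_k a_{jk}\frac{|x_k|^2}{p_k}.
\]

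\textbf{Step 2: summing over \(j\).} All terms are nonnegative, so Tonelli lets us exchange the sums. Symmetry gives \(a_{jk}=a_{kj}\), and applying the hypothesis with index \(k\) gives \(\sum_j a_{kj}p_j\le Mp_k\). Hence
\[
\sum_j|(Ax)_j|^2\le M\sum_k\frac{|x_k|^2}{p_k}\sum_j a_{kj}p_j\le M^2\|x\|^2 .
\]
So \(\|Ax\|\le M\|x\|\) for every finitely supported \(x\).

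\textbf{Step 3: extension to all of \(\ell^2\).} Steps 1 and 2 show that \(A\) extends uniquely by continuity to a bounded operator with \(\|A\|\le M\).

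It remains to check that this extension is given by the matrix itself, i.e.\ that \(\sum_k a_{jk}x_k\) converges absolutely for every \(x\in\ell^2\). The Cauchy--Schwarz bound of Step 1, applied to \(|x|\), shows
\[
\Big(\sum_k a_{jk}|x_k|\Big)^2\le Mp_j\sum_k a_{jk}\frac{|x_k|^2}{p_k}.
\]
The right-hand side is finite for each \(j\), because Step 2 shows that its sum over \(j\) is at most \(M^2\|x\|^2\). So the row sums converge absolutely, and dominated convergence identifies the continuous extension with the matrix action.

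Finally, self-adjointness: for finitely supported \(x,y\), symmetry and realness of the entries give \(\langle Ax,y\rangle=\langle x,Ay\rangle\). This identity passes to all of \(\ell^2\) by density and boundedness.

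The only point needing care is this extension step. It is the justification of interchanging sums and of convergence of the matrix action on all of \(\ell^2\), and nonnegativity of the entries handles it.
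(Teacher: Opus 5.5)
Your proof is correct. It is the standard weighted Cauchy--Schwarz proof of Schur's test, and symmetry of $A$ lets the single row condition also serve as the column condition when you sum over $j$; the paper gives no argument of its own and simply refers to \cite[p.~24]{hal}, where the test is established by essentially this same computation.
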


\begin{theorem} \label{Thm:HO}
For $0 \le t \leq \tfrac{1}{2}$, we have

\[
\|{H}_t\| = \frac{2}{1+\sqrt{1-4t^2}}.
\]
\end{theorem}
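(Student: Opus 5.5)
The plan is to prove the upper bound with Schur's test (Lemma~\ref{lem:Schur}) using geometric weights, and the matching lower bound with the weighted-composition representation $H_t=M_{w_t}C_{\phi_t}$ from the proof of Proposition~\ref{prop: hbd}. Both bounds are governed by the fixed point of $\phi_t(z)=t/(1-tz)$ in $[0,1]$. The case $t=0$ is trivial, since $H_0$ is the orthogonal projection onto the first basis vector, so its norm is $1=2/(1+1)$. Assume $0<t\le\tfrac12$ from now on.

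\emph{Upper bound.} The matrix $a_{jk}=\binom{j+k}{j}t^{j+k}$ is symmetric with nonnegative entries. Take the weights $p_k=a^k$ with $0<a\le1$. The identity $\sum_{k\ge0}\binom{j+k}{k}r^k=(1-r)^{-(j+1)}$ gives
\[
\sum_{k\ge0}a_{jk}a^k=\frac{t^j}{(1-ta)^{j+1}}=\frac{1}{1-ta}\Bigl(\frac{t}{1-ta}\Bigr)^j .
\]
Choose $a$ so that $t/(1-ta)=a$, that is, $ta^2-a+t=0$. Take the smaller root
\[
a=\frac{1-\sqrt{1-4t^2}}{2t}\in(0,1].
\]
For this $a$, Schur's condition holds with
\[
M=\frac{1}{1-ta}=\frac{2}{1+\sqrt{1-4t^2}},
\]
because $1-ta=(1+\sqrt{1-4t^2})/2$. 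Lemma~\ref{lem:Schur} then gives $\|H_t\|\le M$.

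\emph{Lower bound for $t<\tfrac12$.} Here the fixed point $a=\phi_t(a)$ satisfies $a<1$. Let $k_a(z)=(1-az)^{-1}$ be the Szeg\H{o} kernel at $a$, which lies in $H^2(\mathbb D)$. The standard adjoint formula $W_{\psi,\phi}^*k_\lambda=\overline{\psi(\lambda)}\,k_{\phi(\lambda)}$ then gives
\[
H_t^*k_a=w_t(a)\,k_a=\frac{1}{1-ta}\,k_a=M k_a .
\]
Hence $\|H_t\|=\|H_t^*\|\ge M$, which proves the formula for $t<\tfrac12$. As a check, one can verify directly that the vector $x_k=a^k$ is an eigenvector of the matrix; this is exactly the Schur computation above, holding with equality.

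\emph{The endpoint $t=\tfrac12$.} This is the only delicate point, because the fixed point is $a=1$ and $k_1\notin H^2$. I would argue by monotonicity. The entries of $H_t$ are nonnegative and increase with $t$, so for every $x$,
\[
\|H_{1/2}x\|\ge\|H_t|x|\|,
\]
where $|x|$ denotes the entrywise absolute value. It follows that $\|H_{1/2}\|\ge\sup_{t<1/2}\|H_t\|=\lim_{t\uparrow1/2}2/(1+\sqrt{1-4t^2})=2$. The Schur bound gives $\|H_{1/2}\|\le2$, so $\|H_{1/2}\|=2$. As an alternative, one could use the test vectors $x_k=r^k$ with $r\uparrow1$ and compute the Rayleigh quotient.
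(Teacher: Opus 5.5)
Your argument is essentially the paper's: Schur's test with geometric weights $p_k=a^k$ for the upper bound (your smaller root covers $t=\tfrac12$ through $a=1$, which is the paper's constant weight), and the eigenvector $(a^k)$ for the lower bound when $t<\tfrac12$. That eigenvector is exactly your Szeg\H{o} kernel $k_a$, since $H_t$ is self-adjoint. At the endpoint you use monotonicity of the nonnegative entries in $t$, as the paper does.

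One slip needs fixing. The display $\|H_{1/2}x\|\ge\|H_t|x|\|$ is false for signed $x$: for $x=e_0-e_1$ one has $\|H_{1/2}x\|^2=8/27$, while $(H_t|x|)_0=1+t$. The inequality you need is $\|H_t x\|\le\|H_t|x|\|\le\|H_{1/2}|x|\|\le\|H_{1/2}\|\,\|x\|$, which gives $\|H_t\|\le\|H_{1/2}\|$ as you claim.
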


\begin{proof}
We split the proof into two cases: for $0\leq t<1/2$ and $t=\frac{1}{2}$.\\
\textbf{Case} \Romannum{1}:  For $0\leq t<1/2$.

If $t=0,$ then \(H_0\) has only one nonzero matrix entry, that is, $(H_0)_{0,0}=1.$ Thus
\[
\|H_0\|=1.
\]
Assume now that $0<t<\frac{1}{2}.$
The boundedness of $H_t$ follows from Proposition \ref{prop: hbd}. Now we use Schur's test to compute the norm.
Consider $x_k=a^k$ with $0< a<1$. Then
\[
f(z)=\frac{1}{1-az}.
\]
Substituting into the operator formula gives
\[
(H_t f)(z)
= \frac{1}{1-tz}\cdot \frac{1}{1-a\frac{t}{1-tz}}
= \frac{1}{(1-at)}\frac{1}{1-\frac{t}{1-at}z}.
\]
Thus $f$ is an eigenvector if
\[
\frac{t}{1-at}=a,
\]
which is equivalent to $t a^2 - a + t = 0.$
After solving this quadratic equation, we obtain
\[
a=\frac{1-\sqrt{1-4t^2}}{2t},
\]
where the root is chosen so that $0< a<1$. The corresponding eigenvalue is
\[
\lambda=\frac{1}{1-at}
= \frac{2}{1+\sqrt{1-4t^2}}.
\]
Hence $\|H_t\|\ge \lambda$.

For the reverse inequality, set $p_k=a^k$. Then
\[
\sum_{k=0}^{\infty} \binom{j+k}{j} t^{j+k} p_k
= t^j \sum_{k=0}^{\infty} \binom{j+k}{j} (at)^k
= t^j \frac{1}{(1-at)^{j+1}}
= \frac{1}{1-at} \left(\frac{t}{1-at}\right)^j.
\]
Using the relation $a=\frac{t}{1-at}$, this becomes
\[
= \frac{1}{1-at} a^j = \lambda p_j.
\]
By Lemma \ref{lem:Schur}, $\|H_t\|\le \lambda$. Therefore
\[
\|H_t\| = \frac{2}{1+\sqrt{1-4t^2}}, \quad 0 \le t < \tfrac12.
\]
\textbf{Case} \Romannum{2}: For $t=\frac{1}{2}.$
Recall \[
(H_t)_{j,k}=\binom{j+k}{j}t^{j+k}.
\]
Apply Schur's test with constant weight $1$,
\[
\sum_{k=0}^{\infty}\binom{j+k}{j}2^{-j-k}=2.
\]
Therefore  $\|H_t\|\leq2.$

For the reverse inequality, fix \(0<t<1/2\). Since \(H_{1/2}-H_t\) has nonnegative entries and the vector $(a^j)_{j\geq0}$ has nonnegative entries, we have
\[
\left\|H_{1/2}\right\|
\geq
\frac{\left\langle H_{1/2}(a^j)_{j\geq 0},(a^j)_{j\geq 0}\right\rangle}
{\left\|(a^j)_{j\geq 0}\right\|^2}
\geq
\frac{\left\langle H_t(a^j)_{j\geq 0},(a^j)_{j\geq 0}\right\rangle}
{\left\|(a^j)_{j\geq 0}\right\|^2}.
\]
From the first case, the last expression is
\[
\frac{2}{1+\sqrt{1-4t^2}}.
\]
Letting $t\to \frac{1}{2}$, gives $\|H_{1/2}\|\geq2.$ Thus $\|H_{1/2}\|=2.$
\end{proof}

\begin{corollary}[Signed Hankel operator]\label{cor:SHO}
Let $0 \le t \le \tfrac{1}{2}$ and define $\widetilde{H}_t : \ell^2(\mathbb{N}_0) \to \ell^2(\mathbb{N}_0)$ by
\[
(\widetilde{H}_t x)_j = \sum_{k=0}^{\infty} (-1)^{j+k} \binom{j+k}{j} t^{j+k} x_k, \quad j \ge 0.
\]
Then $\widetilde{H}_t$ is a bounded operator with
\[
\|\widetilde{H}_t\| = \frac{2}{1+\sqrt{1-4t^2}}.
\]
\end{corollary}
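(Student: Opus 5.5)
The plan is to show that $\widetilde H_t$ is unitarily equivalent to $H_t$ and then quote Theorem~\ref{Thm:HO}. Let $U$ be the diagonal operator on $\ell^2(\mathbb N_0)$ defined by $(Ux)_j=(-1)^jx_j$. It is unitary and self-adjoint, with $U^{-1}=U^*=U$.

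Step 1: conjugate $H_t$ by $U$. For finitely supported $x$ we have
\[
(UH_tUx)_j=(-1)^j\sum_{k\ge0}\binom{j+k}{j}t^{j+k}(-1)^kx_k=(\widetilde H_tx)_j .
\]
By Proposition~\ref{prop: hbd}, $H_t$ is bounded, so $UH_tU$ is a bounded operator. Its matrix entries are exactly those of $\widetilde H_t$.

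Step 2: check that $\widetilde H_t$ is well defined on all of $\ell^2$, so that $\widetilde H_t=UH_tU$ everywhere and not only on finitely supported vectors. For each fixed $j$, the row $\bigl(\binom{j+k}{j}t^{j+k}\bigr)_{k\ge0}$ is square-summable. This holds because it is the row of the bounded operator $H_t$, namely $H_t^*e_j=H_te_j\in\ell^2$. Therefore the series defining $(\widetilde H_tx)_j$ converges absolutely for every $x\in\ell^2$. Its sum equals $(UH_tUx)_j$, since $x\mapsto (UH_tUx)_j$ is continuous and the two expressions agree on the dense set of finitely supported vectors.

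Step 3: compute the norm. Unitary invariance gives $\|\widetilde H_t\|=\|UH_tU\|=\|H_t\|=\frac{2}{1+\sqrt{1-4t^2}}$ by Theorem~\ref{Thm:HO}. This covers the endpoint $t=\tfrac12$ as well, where the value is $2$.

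There is no real obstacle here. The only point that needs care is the justification in Step 2 that the entrywise series defines the same operator as $UH_tU$ on all of $\ell^2$, and the boundedness of $H_t$ from Proposition~\ref{prop: hbd} handles it. As an aside, $\widetilde H_t$ is also positive, since it is unitarily equivalent to the positive operator $H_t$.
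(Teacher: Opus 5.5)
Your proof is correct and follows the paper's argument: conjugating by the diagonal unitary $(Ux)_j=(-1)^jx_j$ gives $\widetilde H_t=UH_tU$, and unitary invariance together with Theorem~\ref{Thm:HO} gives the norm, including the endpoint $t=\tfrac12$. Your Step 2 adds one point of rigor the paper leaves implicit, namely that the entrywise series agrees with $UH_tU$ on all of $\ell^2$.
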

\begin{proof}
Define $U : \ell^2(\mathbb{N}_0) \to \ell^2(\mathbb{N}_0)$ by $(Ux)_k = (-1)^k x_k$. Then $U$ is unitary, since
\[
\|Ux\|^2 = \sum_{k\ge 0} |(-1)^k x_k|^2 = \sum_{k\ge 0} |x_k|^2 = \|x\|^2,
\]
and $U^{-1} = U=U^*$.
A direct computation shows that for $x \in \ell^2(\mathbb{N}_0)$,
\[
(U H_t U x)_j
= (-1)^j \sum_{k\ge 0} \binom{j+k}{j} t^{j+k} (-1)^k x_k
= (\widetilde{H}_t x)_j.
\]
Thus $\widetilde{H}_t = U H_t U$, and hence $H_t$ and $\widetilde{H}_t$ are unitarily equivalent. In particular, they have the same norm.
\end{proof}

 \begin{corollary} \label{cor:NWCO}
Let $0 \le t \leq \tfrac{1}{2}$, and define the analytic functions on the unit disk $\mathbb{D}$ by
\[
\varphi(z) = \frac{t}{1-tz}, \qquad \psi(z) = \frac{1}{1-tz}.
\]
Then the weighted composition operator $W_{\psi,\varphi}$ defined on $H^2(\mathbb{D})$ by
\[
(W_{\psi,\varphi} f)(z) = \psi(z)\, f(\varphi(z)), \qquad f \in H^2(\mathbb{D}),
\]
is bounded on $H^2(\mathbb{D})$, and its operator norm is given by
\[
\|W_{\psi,\varphi}\|
=
\frac{2}{1+\sqrt{1-4t^2}}.
\]
\end{corollary}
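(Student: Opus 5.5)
The plan is to identify $W_{\psi,\varphi}$ with the weighted Hankel operator $H_t$ of Theorem~\ref{Thm:HO} under the canonical unitary $\ell^2(\mathbb N_0)\cong H^2(\mathbb D)$, $x\mapsto f(z)=\sum_k x_kz^k$. The symbols $\varphi,\psi$ in the statement are exactly the functions $\phi_t, w_t$ appearing in the proof of Proposition~\ref{prop: hbd}.

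\textbf{Step 1: the identity $W_{\psi,\varphi}=H_t$.} That proof already shows, for every $x\in\ell^2(\mathbb N_0)$ and $|z|<1$, that
\[
\sum_{j\ge0}(H_tx)_jz^j=\frac{1}{1-tz}\,f\!\left(\frac{t}{1-tz}\right)=\psi(z)f(\varphi(z)).
\]
The justification is the absolute convergence of the double series, which permits the interchange of summation. Hence $W_{\psi,\varphi}$ coincides with $H_t$ transported to $H^2(\mathbb D)$ as operators.

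\textbf{Step 2: boundedness and the norm.} The same proof also shows that $\varphi$ is an analytic self-map of $\mathbb D$: indeed $|\varphi(z)|<t/(1-t)\le1$. It also shows that $\psi$ is bounded by $2$. So $W_{\psi,\varphi}=M_\psi C_\varphi$ is bounded on $H^2(\mathbb D)$ by Littlewood subordination together with the boundedness of multiplication by $H^\infty$ functions. Since the coefficient map is unitary, $\|W_{\psi,\varphi}\|=\|H_t\|$. Theorem~\ref{Thm:HO} then gives the value $2/(1+\sqrt{1-4t^2})$, including the endpoint $t=1/2$.

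There is no real obstacle here. The only point needing care is that the pointwise identity holds for all $f\in H^2$, not just for polynomials, and that is exactly what the absolute-convergence argument in Proposition~\ref{prop: hbd} provides. Note that at $t=1/2$ the bound $t/(1-t|z|)<1$ still holds for $|z|<1$, so the argument goes through at the endpoint as well.
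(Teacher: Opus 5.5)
Your proposal is correct and follows the paper's own route: the paper's proof is the same identification of $W_{\psi,\varphi}$ with $H_t$ under the coefficient unitary from the proof of Proposition~\ref{prop: hbd}, with Theorem~\ref{Thm:HO} then supplying the norm (endpoint $t=\tfrac12$ included). One cosmetic slip, shared with the paper: the strict inequality $|\varphi(z)|<t/(1-t)$ fails at $t=0$, but there $\varphi\equiv0$ and the case is trivial.
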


\begin{proof}
  The proof follows along the lines of the proof of Proposition \ref{prop: hbd} by identifying
  $W_{\psi,\varphi}$ as a Hankel operator.
\end{proof}

\section{Finite-prime symbols}
\label{sec:3}

Let $p_1,\ldots,p_d$ be distinct primes, and let
\[
\varphi(s)
=
\sigma+\sum_{j=1}^{d}r_jp_j^{-s}.
\]
By the Gordon$-$Hedenmalm theorem (see Theorem B in \cite{GH}), \(\varphi\) induces a bounded composition operator on \(\mathcal{H}^{2}\) if and only if $\operatorname{Re}\sigma>1/2$ and $\operatorname{Re}\sigma-1/2\geq \sum_{j=1}^{d}|r_j|$.
Note that  for $\operatorname{Re}s>0$, we have $|p_j^{-s}|<1,$ for $1\leq j\leq d.$ Hence
\[
\operatorname{Re}\varphi(s)
\geq
\operatorname{Re}\sigma-\sum_{j=1}^{d}|r_j||p_j^{-s}|
>
\operatorname{Re}\sigma-\sum_{j=1}^{d}|r_j|
\geq
\frac{1}{2},
\]
unless all \(r_j=0\), in which case
$\operatorname{Re}\varphi(s)=\operatorname{Re}\sigma>\frac{1}{2}.$
Thus $  \varphi(\mathbb{C}_{0})\subset \mathbb{C}_{1/2}.$

Recall from the Section~2 in \cite{BP1} that $\|C_{\varphi}\|=\|C_{\psi}\|$, where $\psi = \operatorname{Re}\sigma+ \sum_{j=1}^d|r_j|p_j^{-s}.$ Hence we may assume that
\begin{equation}\label{eq:finite-prime symbol}
    \varphi(s)=\sigma+\sum_{j=1}^{d}r_jp_j^{-s},
\text{ where } \sigma\in(1/2, \infty), \quad r_j\ge0, \quad \sigma -\sum_{j=1}^{d}r_j\geq\frac{1}{2}
\end{equation}

In this section, we analyze the
norm of composition operators induced by finite-prime symbols. Our approach is to replace the composition operator acting on $\mathcal{H}^2$ by an equivalent coefficient operator on an $\ell^{2}$-space. This realization transforms the norm problem into the study of an explicit positive operator.

For a multi-index $\alpha=(\alpha_1,\ldots,\alpha_d)\in\mathbb{N}_{0}^{d},$
we use the standard notation
\[
|\alpha|
=
\alpha_1+\cdots+\alpha_d,
\qquad
\alpha!
=
\alpha_1!\cdots\alpha_d!.
\]
If $\boldsymbol{r}=(r_1,\ldots,r_d)\in[0,\infty)^d$,
then
$\boldsymbol{r}^{\alpha}
=
r_1^{\alpha_1}\cdots r_d^{\alpha_d}$ and if $p_1, p_2, \cdots,p_d$ are distinct primes, then $p^{\alpha} = p_1^{\alpha_1}\cdots p_d^{\alpha_d}.$ Throughout, we use the convention $0^0=1.$

Our first step is a coefficient realization for finite-prime symbols $\varphi.$ Let
\[
f(s)=\sum_{n=1}^{N}a_n n^{-s}.
\] be a Dirichlet polynomial.
Then
\[
\begin{aligned}
(C_{\varphi}f)(s)
&=
f\left(\sigma+\sum_{j=1}^{d}r_jp_j^{-s}\right)  \\
&=
\sum_{n=1}^{N}
a_n n^{-\sigma}
\exp\left(
-(\log n)\sum_{j=1}^{d}r_jp_j^{-s}
\right).
\end{aligned}
\]
For each fixed \(n\), we expand
\[
\begin{aligned}
\exp\left(
-(\log n)\sum_{j=1}^{d}r_jp_j^{-s}
\right)
&=
\prod_{j=1}^{d}
\exp\left(-r_j(\log n)p_j^{-s}\right) \\
&=
\prod_{j=1}^{d}
\sum_{\alpha_j=0}^{\infty}
\frac{(-r_j\log n)^{\alpha_j}}{\alpha_j!}p_j^{-\alpha_js} \\
&=
\sum_{\alpha\in\mathbb{N}_{0}^{d}}
\frac{(-\log n)^{|\alpha|}\boldsymbol{r}^{\alpha}}{\alpha!}
(p^{\alpha})^{-s}.
\end{aligned}
\]
For each fixed $n$, the above series converges absolutely, since
\[
\begin{aligned}
\sum_{\alpha\in\mathbb N_0^d}
\left|
\frac{(-\log n)^{|\alpha|}r^\alpha}{\alpha!}
(p^\alpha)^{-s}
\right|
&=
\prod_{j=1}^d
\sum_{\alpha_j=0}^{\infty}
\frac{\big(r_jp_j^{-\operatorname{Re}s}\log n\big)^{\alpha_j}}
{\alpha_j!}\\
&=
\exp\left((\log n)\sum_{j=1}^d
r_jp_j^{-\operatorname{Re}s}\right)
<\infty.
\end{aligned}
\]
Since the sum over $n$ is finite, we may interchange the sums over
$n$ and $\alpha$. Thus,
\begin{equation}\label{eq: finite coefficient}
C_{\varphi}f
=
\sum_{\alpha\in\mathbb{N}_{0}^{d}}
\left(
\sum_{n=1}^{N}
a_n n^{-\sigma}
\frac{(-\log n)^{|\alpha|}\boldsymbol{r}^{\alpha}}{\alpha!}
\right)
(p^{\alpha})^{-s}.
\end{equation}
Define the coefficient operator
\[
T_{\sigma,\boldsymbol{r}}:\ell^2(\mathbb{N})\longrightarrow\ell^2(\mathbb{N}_{0}^{d})
\]
 by \[ a=( a_n) \mapsto \bigl((T_{\sigma,\boldsymbol r}a)_\alpha\bigr)_{
\alpha\in\mathbb N_0^d},\]  where
\[
(T_{\sigma,\boldsymbol{r}}a)_{\alpha}
=
\sum_{n=1}^{\infty}
a_n n^{-\sigma}
\frac{(-\log n)^{|\alpha|}\boldsymbol{r}^{\alpha}}{\alpha!}.
\]

\begin{proposition}
\label{prop:fp-coefficient-realization}
Let $\varphi,\sigma,\boldsymbol{r}$ be as in \eqref{eq:finite-prime symbol}. Then the operator \(T_{\sigma,\boldsymbol{r}}\) is  bounded on $\ell^2(\mathbb{N})$ with
\[
\left\|C_{\varphi}\right\|
=
\left\|T_{\sigma,\boldsymbol{r}}\right\|.
\]
\end{proposition}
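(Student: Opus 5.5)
The plan is to pass through the canonical unitary identifications. Let $U:\ell^2(\mathbb N)\to\mathcal H^2$ be $U(a_n)=\sum_n a_nn^{-s}$, which is unitary by definition of the norm on $\mathcal H^2$. Let $V:\ell^2(\mathbb N_0^d)\to\mathcal H^2$ be $V(c_\alpha)=\sum_\alpha c_\alpha (p^\alpha)^{-s}$. Since the map $\alpha\mapsto p^\alpha$ is injective by unique factorization, $V$ is an isometry. Its range is the closed subspace $\mathcal H^2_{p}$ of Dirichlet series supported on integers whose prime factors all lie in $\{p_1,\dots,p_d\}$.

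Identity \eqref{eq: finite coefficient} says exactly that
\[
C_\varphi U a = V T_{\sigma,\boldsymbol r}a
\]
for every finitely supported $a$. Here the right side is an $\ell^2$ sequence, because $C_\varphi Ua\in\mathcal H^2$ by the Gordon--Hedenmalm theorem. The identification of coefficients is legitimate because a Dirichlet series converging absolutely on $\mathbb C_0$ determines its coefficients, and $\mathcal H^2$-membership pins down the same coefficients.

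From this, for finitely supported $a$ we get
\[
\|T_{\sigma,\boldsymbol r}a\|=\|VT_{\sigma,\boldsymbol r}a\|=\|C_\varphi Ua\|\le \|C_\varphi\|\,\|a\|,
\]
using that $V$ is an isometry. Hence $T_{\sigma,\boldsymbol r}$, restricted to finitely supported sequences, extends by density to a bounded operator $\widetilde T$ with $\|\widetilde T\|\le\|C_\varphi\|$. Conversely, density of Dirichlet polynomials in $\mathcal H^2$ and boundedness of $C_\varphi$ give
\[
\|C_\varphi\|=\sup_{a\text{ fin. supp.},\,\|a\|=1}\|C_\varphi Ua\|=\sup\|T_{\sigma,\boldsymbol r}a\|=\|\widetilde T\|.
\]

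The one genuine point, and where I expect the main care is needed, is checking that the extension $\widetilde T$ coincides with the series formula defining $T_{\sigma,\boldsymbol r}$ on all of $\ell^2(\mathbb N)$. In other words, each coordinate series $\sum_n a_nn^{-\sigma}(\log n)^{|\alpha|}\boldsymbol r^\alpha/\alpha!$ must converge for every $a\in\ell^2$, and it must equal the limit of the truncations.

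This holds since
\[
\sum_n n^{-2\sigma}(\log n)^{2|\alpha|}<\infty \quad\text{for }\sigma>1/2,
\]
so by Cauchy--Schwarz each coordinate functional $a\mapsto (T_{\sigma,\boldsymbol r}a)_\alpha$ is bounded on $\ell^2(\mathbb N)$. It is therefore continuous, and it agrees with the $\alpha$-coordinate of $\widetilde T a$: coordinates of the $\ell^2$-limit of $\widetilde T a^{(N)}$ are limits of coordinates, where $a^{(N)}$ are truncations of $a$.

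Thus $T_{\sigma,\boldsymbol r}=\widetilde T$ is bounded (from $\ell^2(\mathbb N)$ into $\ell^2(\mathbb N_0^d)$) and $\|C_\varphi\|=\|T_{\sigma,\boldsymbol r}\|$. As a byproduct, $C_\varphi=VT_{\sigma,\boldsymbol r}U^*$ on all of $\mathcal H^2$.
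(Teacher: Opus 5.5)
Your proposal is correct and follows the paper's proof essentially step for step. It uses the isometric identification of $\ell^2(\mathbb N_0^d)$ with the finite-prime subspace, the identity $C_\varphi f=VT_{\sigma,\boldsymbol r}a$ on Dirichlet polynomials, density arguments in both directions for the norm equality, and the Cauchy--Schwarz check (via $\sum_n n^{-2\sigma}(\log n)^{2|\alpha|}<\infty$) that the bounded extension is given by the same coefficient formula; your remark that uniqueness of Dirichlet coefficients is what justifies reading off $C_\varphi f=VT_{\sigma,\boldsymbol r}a$ from the pointwise expansion is a point the paper leaves implicit.
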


\begin{proof}
For distinct primes $p_1,\ldots,p_d$, define the finite-prime subspace
\[
\mathcal{H}^{2}(p_1,\ldots,p_d)
=
\overline{\operatorname{span}}
\left\{
(p^{\alpha})^{-s}:\alpha\in\mathbb{N}_{0}^{d}
\right\}
\subset \mathcal{H}^{2}.
\]
Define
\[
U_d:\ell^2(\mathbb{N}_{0}^{d})\to \mathcal{H}^{2}(p_1,\ldots,p_d)
\]
by
\[
U_d b
=
\sum_{\alpha\in\mathbb{N}_{0}^{d}}b_{\alpha}(p^{\alpha})^{-s}.
\]
Since $\alpha\mapsto p^{\alpha}$ is injective, \(U_d\) is unitary.
Then equation  \eqref{eq: finite coefficient} gives
\[
C_{\varphi}f
=
U_dT_{\sigma,\boldsymbol{r}}a, \qquad \text{ for } a\in C_{00}.
\]
Thus \[
\left\|C_{\varphi}f\right\|_{\mathcal{H}^{2}}
=
\left\|T_{\sigma,\boldsymbol{r}}a\right\|_{\ell^2(\mathbb{N}_{0}^{d})}.
\]
Because \(C_{\varphi}\) is bounded on \(\mathcal{H}^{2}\), we get, for every finitely supported \(a\),
\[
\left\|T_{\sigma,\boldsymbol{r}}a\right\|_{\ell^2(\mathbb{N}_{0}^{d})}
=
\left\|C_{\varphi}f\right\|_{\mathcal{H}^{2}}
\leq
\left\|C_{\varphi}\right\|\left\|f\right\|_{\mathcal{H}^{2}}
=
\left\|C_{\varphi}\right\|\left\|a\right\|_{\ell^2(\mathbb{N})}.
\]
Hence \(T_{\sigma,\boldsymbol{r}}\) extends uniquely to a bounded operator (see Theorem 1.9.1 in \cite{RM})
\[
\widetilde{T}_{\sigma,\boldsymbol{r}}:\ell^2(\mathbb{N})\to\ell^2(\mathbb{N}_{0}^{d}),
\]
whose restriction to $c_{00}$ coincides with $T_{\sigma,\boldsymbol{r}}$.
Explicitly, if
\[
a^{(m)}\in c_{00},
\qquad
a^{(m)}\rightarrow a
\quad\text{in }\ell^2(\mathbb N),
\]
then
\[
\widetilde T_{\sigma,\boldsymbol{r}}a
=
\lim_{m\rightarrow\infty}
T_{\sigma,\boldsymbol{r}}a^{(m)}.
\]
By uniqueness, we identify $\widetilde T_{\sigma,\boldsymbol{r}}$ with
$T_{\sigma,\boldsymbol{r}}$.
Thus
$\left\|T_{\sigma,\boldsymbol{r}}\right\|
\leq
\left\|C_{\varphi}\right\|.$

Conversely, for every Dirichlet polynomial \(f\) with coefficient vector \(a\), the identity
\[
C_{\varphi}f
=
U_dT_{\sigma,\boldsymbol{r}}a
\]
gives
\[
\left\|C_{\varphi}f\right\|_{\mathcal{H}^{2}}
=
\left\|T_{\sigma,\boldsymbol{r}}a\right\|_{\ell^2(\mathbb{N}_{0}^{d})}
\leq
\left\|T_{\sigma,\boldsymbol{r}}\right\|
\left\|a\right\|_{\ell^2(\mathbb{N})}
=
\left\|T_{\sigma,\boldsymbol{r}}\right\|
\left\|f\right\|_{\mathcal{H}^{2}}.
\]
Since Dirichlet polynomials are dense in \(\mathcal{H}^{2}\), and  \(C_{\varphi}\) is bounded, it follows that
\[
\left\|C_{\varphi}\right\|
\leq
\left\|T_{\sigma,\boldsymbol{r}}\right\|.
\]
Therefore
\[
\left\|C_{\varphi}\right\|
=
\left\|T_{\sigma,\boldsymbol{r}}\right\|.
\]
It remains to show that the extended operator is still given by the same coefficient formula. We have
\[
(T_{\sigma,\boldsymbol{r}}a^{(m)})_\alpha
=
\sum_{n=1}^{\infty}
a_n^{(m)}
n^{-\sigma}
\frac{(-\log n)^{|\alpha|}}{\alpha!}
\boldsymbol{r}^\alpha.
\]

Therefore,
\[
\begin{aligned}
&
\left|
(T_{\sigma,\boldsymbol{r}}a^{(m)})_\alpha
-
\sum_{n=1}^{\infty}
a_n
n^{-\sigma}
\frac{(-\log n)^{|\alpha|}}{\alpha!}
\boldsymbol{r}^\alpha
\right|
\\
&\le
\sum_{n=1}^{\infty}
|a_n^{(m)}-a_n|
n^{-\sigma}
\frac{(\log n)^{|\alpha|}}{\alpha!}
|\boldsymbol{r}^\alpha|.
\end{aligned}
\]

Applying the Cauchy--Schwarz inequality gives
\[
\begin{aligned}
&
\left|
(T_{\sigma,\boldsymbol{r}}a^{(m)})_\alpha
-
\sum_{n=1}^{\infty}
a_n
n^{-\sigma}
\frac{(-\log n)^{|\alpha|}}{\alpha!}
\boldsymbol{r}^\alpha
\right|
\\
&\le
\|a^{(m)}-a\|_{\ell^2}
\left(
\sum_{n=1}^{\infty}
n^{-2\sigma}
(\log n)^{2|\alpha|}
\right)^{1/2}
\frac{|\boldsymbol{r}^\alpha|}{\alpha!}.
\end{aligned}
\]

Since $\sigma>\frac12$, the series
\[
\sum_{n=1}^{\infty}
n^{-2\sigma}
(\log n)^{2|\alpha|}
\]
converges, while
\[
a^{(m)}\rightarrow a
\quad\text{in }\ell^2(\mathbb N).
\]
Hence the right-hand side tends to zero as $m\rightarrow\infty$, and therefore
\[
(T_{\sigma,\boldsymbol{r}}a)_\alpha
=
\sum_{n=1}^{\infty}
a_n
n^{-\sigma}
\frac{(-\log n)^{|\alpha|}}{\alpha!}
\boldsymbol{r}^\alpha,
\]
which proves that the bounded extension is represented by the same
coefficient formula.
\end{proof}

Proposition \ref{prop:fp-coefficient-realization} identifies the composition operator with its coefficient realization. Consequently, the norm problem can be formulated entirely in terms of the coefficient operator $T_{\sigma,r}.$
Since $\|T_{\sigma,r}\|^2=\|T_{\sigma,r}T_{\sigma,r}^*\|$, (see Section $12.9$ in \cite{Ru91})
we may study the norm problem through the positive operator
$A_{\sigma,\boldsymbol r}
:=
T_{\sigma,\boldsymbol r}T_{\sigma,\boldsymbol r}^*$. We first derive an explicit expression for the matrix of $A_{\sigma,\boldsymbol r}$, which will play a central role in the subsequent  analysis. For a given operator $A$ on $\ell^2(\mathbb{N}_{0}^{d}),$ $(A)_{\alpha,\beta}$ denotes the $(\alpha,\beta)$-th entry of the matrix representation of $A.$

\begin{proposition}
\label{prop:fp-output-kernel}
Let $\sigma, \boldsymbol{r}$ be as in \eqref{eq:finite-prime symbol}. Then
for $\alpha,\beta\in\mathbb{N}_{0}^{d}$,
we have
\[
(T_{\sigma,\boldsymbol{r}}T_{\sigma,\boldsymbol{r}}^{*})_{\alpha,\beta}
=
\frac{\boldsymbol{r}^{\alpha+\beta}}{\alpha!\beta!}
\zeta^{(|\alpha|+|\beta|)}(2\sigma).
\]
\end{proposition}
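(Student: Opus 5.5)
The plan is to compute $T^*$ explicitly on standard basis vectors and then evaluate $\langle T^*e_\beta, T^*e_\alpha\rangle$ as a Dirichlet-type sum, which we recognize as a derivative of $\zeta$.

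\textbf{Step 1: the rows of $T_{\sigma,\boldsymbol r}$.} By Proposition~\ref{prop:fp-coefficient-realization}, $T_{\sigma,\boldsymbol r}$ is bounded, and for every $a\in\ell^2(\mathbb N)$ its $\alpha$-th coordinate is
\[
(T_{\sigma,\boldsymbol r}a)_\alpha=\langle a,v_\alpha\rangle,
\qquad
(v_\alpha)_n=n^{-\sigma}\frac{(-\log n)^{|\alpha|}\boldsymbol r^\alpha}{\alpha!},
\]
since $\boldsymbol r$ and $\sigma$ are real. The vector $v_\alpha$ lies in $\ell^2(\mathbb N)$, because $\sum_n n^{-2\sigma}(\log n)^{2|\alpha|}<\infty$ for $\sigma>\tfrac12$. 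This is the same Cauchy--Schwarz bound used at the end of the proof of Proposition~\ref{prop:fp-coefficient-realization}.

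\textbf{Step 2: the adjoint.} For the standard basis vector $e_\alpha$ of $\ell^2(\mathbb N_0^d)$ we have
\[
\langle a,T^*_{\sigma,\boldsymbol r}e_\alpha\rangle=\langle T_{\sigma,\boldsymbol r}a,e_\alpha\rangle=\langle a,v_\alpha\rangle
\]
for all $a$, so $T^*_{\sigma,\boldsymbol r}e_\alpha=v_\alpha$.

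\textbf{Step 3: the matrix entries.} Using Step 2,
\[
(T_{\sigma,\boldsymbol r}T^*_{\sigma,\boldsymbol r})_{\alpha,\beta}
=\langle T^*_{\sigma,\boldsymbol r}e_\beta,T^*_{\sigma,\boldsymbol r}e_\alpha\rangle
=\langle v_\beta,v_\alpha\rangle
=\frac{\boldsymbol r^{\alpha+\beta}}{\alpha!\beta!}\sum_{n\ge1}n^{-2\sigma}(\log n)^{|\alpha|+|\beta|}.
\]
Here the sign is $(-1)^{|\alpha|+|\beta|}$, and it is absorbed in the next step.

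\textbf{Step 4: identification with $\zeta$.} For $\operatorname{Re}s>1$, the series $\zeta(s)=\sum_n n^{-s}$ converges locally uniformly, so it may be differentiated termwise. This gives
\[
\zeta^{(k)}(s)=\sum_n(-\log n)^k n^{-s}.
\]
Take $s=2\sigma>1$ and $k=|\alpha|+|\beta|$. Then $\sum_n n^{-2\sigma}(-\log n)^{k}=\zeta^{(k)}(2\sigma)$. Putting the sign $(-1)^k$ back into the sum yields exactly the stated formula.

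The only point requiring care is the justification of the adjoint on the whole of $\ell^2$. This is handled by the fact, already established in Proposition~\ref{prop:fp-coefficient-realization}, that the coefficient formula holds for every $a\in\ell^2(\mathbb N)$ and not only for finitely supported $a$. Apart from that, the argument is a routine computation, with the termwise differentiation of $\zeta$ being standard.
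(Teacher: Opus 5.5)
Your proposal is correct and matches the paper's proof: identify $T_{\sigma,\boldsymbol r}^{*}e_\alpha$ with $\bigl(n^{-\sigma}(-\log n)^{|\alpha|}\boldsymbol r^{\alpha}/\alpha!\bigr)_{n\ge1}$, evaluate $\langle T_{\sigma,\boldsymbol r}^{*}e_\beta,T_{\sigma,\boldsymbol r}^{*}e_\alpha\rangle$, and recognize the resulting sum as $\zeta^{(|\alpha|+|\beta|)}(2\sigma)$. The only blemish is cosmetic: the display in your Step 3 should read $(-\log n)^{|\alpha|+|\beta|}$ rather than $(\log n)^{|\alpha|+|\beta|}$, as you already note in words.
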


\begin{proof}
Let $(\varepsilon_n)_{n\geq 1}$ be the standard orthonormal basis of $\ell^2(\mathbb{N}),$ and let $(e_{\alpha})_{\alpha\in\mathbb{N}_{0}^{d}}$ be the standard orthonormal basis of $\ell^2(\mathbb{N}_{0}^{d}).$

The matrix entries of \(T_{\sigma,\boldsymbol{r}}\) are
\[
\left\langle T_{\sigma,\boldsymbol{r}}\varepsilon_n,e_{\alpha}\right\rangle
=
n^{-\sigma}
\frac{(-\log n)^{|\alpha|}\boldsymbol{r}^{\alpha}}{\alpha!}.
\]
Then the adjoint of $T_{\sigma,r} $ is given by
\[
T_{\sigma,\boldsymbol{r}}^{*}e_{\alpha}
=
\left(
n^{-\sigma}
\frac{(-\log n)^{|\alpha|}\boldsymbol{r}^{\alpha}}{\alpha!}
\right)_{n\geq 1}.
\]
Therefore
\[
\begin{aligned}
(T_{\sigma,\boldsymbol{r}}T_{\sigma,\boldsymbol{r}}^{*})_{\alpha,\beta}
&=
\left\langle
T_{\sigma,\boldsymbol{r}}T_{\sigma,\boldsymbol{r}}^{*}e_{\beta},
e_{\alpha}
\right\rangle                                                      \\
&=
\left\langle
T_{\sigma,\boldsymbol{r}}^{*}e_{\beta},
T_{\sigma,\boldsymbol{r}}^{*}e_{\alpha}
\right\rangle_{\ell^2(\mathbb{N})}                                  \\
&=
\sum_{n=1}^{\infty}
n^{-2\sigma}
\frac{(-\log n)^{|\alpha|+|\beta|}
\boldsymbol{r}^{\alpha+\beta}}
{\alpha!\beta!}.
\end{aligned}
\]
Recall that for $\operatorname{Re}s>1$ and $m\in\mathbb{N}_{0},$ we have
\[
\zeta^{(m)}(s)
=
\sum_{n=1}^{\infty}
(-\log n)^m n^{-s}.
\]
Applying this with
$m=|\alpha|+|\beta|,$ and $s=2\sigma,$ gives
\[
\sum_{n=1}^{\infty}
n^{-2\sigma}
(-\log n)^{|\alpha|+|\beta|}
=
\zeta^{(|\alpha|+|\beta|)}(2\sigma).
\]
Hence
\[
(T_{\sigma,\boldsymbol{r}}T_{\sigma,\boldsymbol{r}}^{*})_{\alpha,\beta}
=
\frac{\boldsymbol{r}^{\alpha+\beta}}{\alpha!\beta!}
\zeta^{(|\alpha|+|\beta|)}(2\sigma).
\]
\end{proof}
The above coefficient representation reduces the norm problem to the positive
operator $A_{\sigma,\boldsymbol r}$. We now examine
this operator in the singular regime in which the constant term
approaches the boundary $\operatorname{Re}s=1/2$ and the prime
coefficients vanish on the same scale.

From \eqref{eq:finite-prime symbol}, we have $\sigma>1/2$ and $\sigma-1/2\geq(r_1+r_2+\cdots+r_d).$  Set
\[\delta:=\sigma-1/2>0, \qquad \rho_j:=\frac{r_j}{\delta}\]
Then $0\leq\rho_1 +\cdots+\rho_d\leq1$, and the  symbol $\varphi$ can be written as
\begin{equation}\label{eq:scaling symbol}
\varphi_{\delta,\boldsymbol{\rho}}(s)
=
\frac{1}{2}+\delta+\delta\sum_{j=1}^{d}\rho_jp_j^{-s},
\end{equation}
where $\boldsymbol{\rho}=(\rho_1, \rho_2,\ldots,\rho_d).$

This scaling isolates the boundary parameter $\delta$, allowing us to study the asymptotic behavior as $\delta\downarrow0$ while keeping the normalized parameters $\boldsymbol{\rho}$ fixed.
Recall that $B_d=\left\{\boldsymbol{\rho}=(\rho_1, \ldots, \rho_d)\in[0,1]^d : \rho_1+\cdots+\rho_d\leq1 \right\}.$ It is clear that, for any $\delta>0$ and $\boldsymbol{\rho}\in B_d,$ the composition operator induced by the symbol $\varphi_{\delta,\boldsymbol{\rho}}$ is always
bounded on $\mathcal{H}^2$.
For $\boldsymbol{\rho}\in B_d,$
we use the notation
$\boldsymbol{\rho}^{\alpha}
= \rho_1^{\alpha_1}\cdots \rho_d^{\alpha_d}$, and $R_{\boldsymbol{\rho}}:=\rho_1+\cdots+\rho_d.$

Since $\delta=\sigma-1/2$ and $\boldsymbol{r}=\delta \boldsymbol{\rho},$ the operator $T_{\sigma,\boldsymbol{r}}$ introduced above, under this scaling, will
be denoted by
\[
T_{\delta,\boldsymbol{\rho}}
=
T_{\frac{1}{2}+\delta,\delta\boldsymbol{\rho}},
\qquad
A_{\delta,\boldsymbol{\rho}}
=
T_{\delta,\boldsymbol{\rho}}T_{\delta,\boldsymbol{\rho}}^{*}.
\]
Thus
\[
(T_{\delta,\boldsymbol{\rho}}a)_{\alpha}
=
\sum_{n=1}^{\infty}
a_n n^{-1/2-\delta}
\frac{(-\delta\log n)^{|\alpha|}\boldsymbol{\rho}^{\alpha}}{\alpha!}.
\]
By Proposition \ref{prop:fp-output-kernel}, we get the following result
\begin{equation}\label{eq:positive operator A}
(A_{\delta,\boldsymbol{\rho}})_{\alpha,\beta}
=
\frac{
\delta^{|\alpha|+|\beta|}
\boldsymbol{\rho}^{\alpha+\beta}}
{\alpha!\beta!}
\zeta^{(|\alpha|+|\beta|)}(1+2\delta).
\end{equation}
For $\boldsymbol{\rho}=(\rho_1,\ldots,\rho_d)\in[0, 1]^d,$ define  a multivariate weighted Hankel matrix $\mathcal{H}_{\boldsymbol{\rho}}
= \left((\mathcal{H}_{\boldsymbol{\rho}})_{\alpha,\beta}
\right)_{\alpha,\beta\in\mathbb{N}_{0}^{d}},$ where
\begin{equation}\label{eq:multivariate HO}
(\mathcal{H}_{\boldsymbol{\rho}})_{\alpha,\beta}
=
(-1)^{|\alpha|+|\beta|}
\frac{(|\alpha|+|\beta|)!}{\alpha!\beta!}
\prod_{j=1}^{d}
\left(\frac{\rho_j}{2}\right)^{\alpha_j+\beta_j}
\end{equation}
which will serve as the principal tool in the proof of the main theorem.

When $R_{\boldsymbol{\rho}}=0$, the operator
$\mathcal H_{\boldsymbol0}$ is rank one with the norm $1$. Hence, unless otherwise stated, we assume $R_{\boldsymbol{\rho}}>0.$
The next lemma proves the boundedness of
$\mathcal H_{\boldsymbol{\rho}}$.

\begin{lemma}
\label{lem:fp-boundedness-H-rho}
For a given $\boldsymbol{\rho}\in B_d,$  the matrix $\mathcal{H}_{\boldsymbol{\rho}}$ defines a bounded  operator on $\ell^2(\mathbb{N}_{0}^{d}).$
Moreover,
\[
\left\|\mathcal{H}_{\boldsymbol{\rho}}\right\|
\leq
\frac{2}{1+\sqrt{1-R_{\boldsymbol{\rho}}^2}}.
\]
\end{lemma}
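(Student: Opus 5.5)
Our plan is to reduce the multivariate matrix to the one-variable weighted Hankel operator of Section~\ref{Sec:2} by Schur's test, with multivariate weights that mimic the eigenvector $(a^k)$ used in Theorem~\ref{Thm:HO}.

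First, remove the signs. Define the unitary diagonal operator $(Vx)_\alpha=(-1)^{|\alpha|}x_\alpha$ on $\ell^2(\mathbb N_0^d)$. Conjugating by $V$ turns $\mathcal H_{\boldsymbol\rho}$ into the matrix with nonnegative entries
\[
K_{\alpha,\beta}=\frac{(|\alpha|+|\beta|)!}{\alpha!\beta!}\Bigl(\frac{\boldsymbol\rho}{2}\Bigr)^{\alpha+\beta},
\]
exactly as in Corollary~\ref{cor:SHO}. This matrix is symmetric, so $\|\mathcal H_{\boldsymbol\rho}\|=\|K\|$, and Lemma~\ref{lem:Schur} applies.

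Next, choose the test weights $p_\beta=a^{|\beta|}$ with $t=R_{\boldsymbol\rho}/2\in(0,\tfrac12]$. When $t<\tfrac12$, take $a=\frac{1-\sqrt{1-4t^2}}{2t}$, the root of $ta^2-a+t=0$. The key computation groups the indices $\beta$ by total degree $m=|\beta|$ and uses the multinomial identity
\[
\sum_{|\beta|=m}\frac{m!}{\beta!}\Bigl(\frac{\boldsymbol\rho}{2}\Bigr)^{\beta}=t^m .
\]
Writing $\frac{(|\alpha|+m)!}{\alpha!\beta!}=\frac{(|\alpha|+m)!}{\alpha!\,m!}\cdot\frac{m!}{\beta!}$, we get
\[
\sum_\beta K_{\alpha,\beta}p_\beta=\frac{|\alpha|!}{\alpha!}\Bigl(\frac{\boldsymbol\rho}{2}\Bigr)^{\alpha}\cdot\frac{1}{|\alpha|!}\sum_{m\ge0}\binom{|\alpha|+m}{m}\frac{(|\alpha|+m)!}{(|\alpha|+m)!}\,m!\,\frac{t^m a^m}{m!}\cdots
\]
This is better written directly as
\[
\sum_\beta K_{\alpha,\beta}p_\beta=\frac{(\boldsymbol\rho/2)^{\alpha}}{\alpha!}\sum_{m\ge0}\frac{(|\alpha|+m)!}{m!}(at)^m=\frac{|\alpha|!\,(\boldsymbol\rho/2)^{\alpha}}{\alpha!}\cdot\frac{1}{(1-at)^{|\alpha|+1}} .
\]

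The obstacle is that the factor $\frac{|\alpha|!}{\alpha!}(\boldsymbol\rho/2)^\alpha$ is not $t^{|\alpha|}$ for each individual $\alpha$. By the multinomial theorem it is only bounded above by $t^{|\alpha|}$, since it is a single term of the expansion of $t^{|\alpha|}$ and all terms are nonnegative. This inequality is exactly what Schur's test needs. It gives
\[
\sum_\beta K_{\alpha,\beta}p_\beta\le\frac{1}{1-at}\Bigl(\frac{t}{1-at}\Bigr)^{|\alpha|}=\lambda\,a^{|\alpha|}=\lambda p_\alpha ,
\qquad \lambda=\frac1{1-at}=\frac{2}{1+\sqrt{1-4t^2}},
\]
using $a=t/(1-at)$. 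Lemma~\ref{lem:Schur} then yields boundedness and $\|K\|\le\lambda$, which is the claimed bound since $4t^2=R_{\boldsymbol\rho}^2$.

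In the case $R_{\boldsymbol\rho}=1$ we use constant weights $p_\beta=1$, as in Case~II of Theorem~\ref{Thm:HO}. The same computation gives the row sum bound
\[
\sum_\beta K_{\alpha,\beta}\le t^{|\alpha|}\sum_m\binom{|\alpha|+m}{m}t^m=2 .
\]
In the case $R_{\boldsymbol\rho}=0$ the operator is rank one with norm $1$, which is again the bound. Only the interchange of sums needs comment, and it is harmless because all terms are nonnegative.
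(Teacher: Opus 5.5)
Your proof is correct, but it follows a different route from the paper's. You remove the signs with the diagonal unitary $(-1)^{|\alpha|}$ and run Schur's test directly on the $d$-variable matrix, using the radial weights $p_\beta=a^{|\beta|}$. The only multivariate input is the pointwise multinomial bound $\frac{|\alpha|!}{\alpha!}(\boldsymbol\rho/2)^{\alpha}\le (R_{\boldsymbol\rho}/2)^{|\alpha|}$, which turns the exact row sum $\frac{|\alpha|!\,(\boldsymbol\rho/2)^\alpha}{\alpha!\,(1-at)^{|\alpha|+1}}$ into at most $\lambda a^{|\alpha|}$.

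The paper instead factors $\mathcal H_{\boldsymbol\rho}=V^*\widetilde H_{R_{\boldsymbol\rho}/2}V$, where $(Vx)_N=\sum_{|\alpha|=N}w_\alpha x_\alpha$ with $w_\alpha=\frac{N!}{\alpha!}\boldsymbol\theta^\alpha$ and $\theta_j=\rho_j/R_{\boldsymbol\rho}$. It shows $\|V\|\le 1$ from $\sum_{|\alpha|=N}w_\alpha^2\le\bigl(\sum_{|\alpha|=N}w_\alpha\bigr)^2=1$, and then imports the one-variable norm from Corollary~\ref{cor:SHO}.

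Your version is self-contained. It uses only Lemma~\ref{lem:Schur} (applied after enumerating $\mathbb N_0^d$) and the multinomial theorem, not the exact evaluation in Theorem~\ref{Thm:HO}. It also gets boundedness and the bound in one stroke. It handles $R_{\boldsymbol\rho}=1$ and $R_{\boldsymbol\rho}=0$ explicitly; at $t=\tfrac12$ one has $a=1$, so your constant weights belong to the same family.

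The paper's factorization buys structure. Since $VV^*=\operatorname{diag}\bigl(c_N(\boldsymbol\rho)^2\bigr)=D_{\boldsymbol\rho}^2$, the factorization already contains the total-degree reduction $\mathcal H_{\boldsymbol\rho}\simeq D_{\boldsymbol\rho}H_{R_{\boldsymbol\rho}/2}D_{\boldsymbol\rho}\oplus\mathbf 0$ of Theorem~\ref{thm:boundary-Hankel-reduction}. It also shows exactly where the loss in the inequality comes from, namely $V$ failing to be a co-isometry; your Schur weights do not reveal this.

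One cosmetic point: delete the abandoned first display of the row sum. As written it is off by a factor $|\alpha|!$; your second display is the correct one.
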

\begin{proof}
Assume that $R_{\boldsymbol{\rho}}>0,$ and consider
 $\theta_j = \frac{\rho_j}{R_{\boldsymbol{\rho}}},$  for $ 1\leq j\leq d.$

For $|\alpha|=N,$ define
\[
w_{\alpha}
=
\frac{N!}{\alpha!}
\theta_1^{\alpha_1}\cdots \theta_d^{\alpha_d}.
\]
Then, by using multinomial theorem
\[
w_{\alpha}\geq 0,
\qquad
\sum_{|\alpha|=N}w_{\alpha}=1.
\]
Define $V:\ell^2(\mathbb{N}_{0}^{d})\to \ell^2(\mathbb{N}_{0})$
by
\[
(Vx)_N
=
\sum_{|\alpha|=N}w_{\alpha}x_{\alpha}.
\]
For each \(N\), the Cauchy--Schwarz inequality gives
\[
|(Vx)_N|^2
\leq
\left(\sum_{|\alpha|=N}w_{\alpha}^2\right)
\left(\sum_{|\alpha|=N}|x_{\alpha}|^2\right).
\]
Since
\[
\sum_{|\alpha|=N}w_{\alpha}^2
\leq
\left(\sum_{|\alpha|=N}w_{\alpha}\right)^2
=
1,
\]
we obtain
\[
|(Vx)_N|^2
\leq
\sum_{|\alpha|=N}|x_{\alpha}|^2.
\]
Summing over \(N\) gives
\[
\left\|Vx\right\|_{\ell^2(\mathbb{N}_{0})}
\leq
\left\|x\right\|_{\ell^2(\mathbb{N}_{0}^{d})}.
\]
Thus \(V\) is a contraction.

Let $\widetilde{H}_{R_{\boldsymbol{\rho}}/2}$ be the signed Hankel operator from  Corollary \ref{cor:SHO}. We claim that
\[
\mathcal{H}_{\boldsymbol{\rho}}
=
V^{*}\widetilde{H}_{R_{\boldsymbol{\rho}}/2}V.
\]
Indeed, let $|\alpha|=N, |\beta|=M$. Then the \((\alpha,\beta)\)-entry of
$V^{*}\widetilde{H}_{R_{\boldsymbol{\rho}}/2}V$ is
\[
w_{\alpha}
(-1)^{N+M}
\binom{N+M}{N}
\left(\frac{R_{\boldsymbol{\rho}}}{2}\right)^{N+M}
w_{\beta}.
\]
Substituting the definitions of \(w_{\alpha}\) and \(w_{\beta}\), this becomes
\[
(-1)^{N+M}
\frac{(N+M)!}{\alpha!\beta!}
\prod_{j=1}^{d}
\left(\frac{\rho_j}{2}\right)^{\alpha_j+\beta_j}.
\]
This is exactly $(\mathcal{H}_{\boldsymbol{\rho}})_{\alpha,\beta}.$
Therefore
$\mathcal{H}_{\boldsymbol{\rho}}
=
V^{*}\widetilde{H}_{R_{\boldsymbol{\rho}}/2}V$
is a bounded operator.
Since \(V\) is a contraction, Corollary \ref{cor:SHO} gives
\[
\left\|\mathcal{H}_{\boldsymbol{\rho}}\right\|
\leq
\left\|\widetilde{H}_{R_{\boldsymbol{\rho}}/2}\right\|
=
\frac{2}{1+\sqrt{1-R_{\boldsymbol{\rho}}^2}}.
\]
\end{proof}

The following theorem establishes the asymptotic behavior of the norm of the composition operator.

\begin{theorem}\label{main}
Fix \(\eta>0\). Then there exists a constant \(C_\eta>0\) such that,
for every \(0<\delta\leq\eta\),
\[
\sup_{\boldsymbol\rho\in B_d}
\left\|
2\delta A_{\delta,\boldsymbol\rho}
-
\mathcal H_{\boldsymbol\rho}
\right\|
\leq C_\eta\delta.
\]
Consequently,
\[
\sup_{\boldsymbol\rho\in B_d}
\left|
2\delta\|C_{\varphi_{\delta,\boldsymbol\rho}}\|^2
-
\|\mathcal H_{\boldsymbol\rho}\|
\right|
\leq C_\eta\delta.
\]
\end{theorem}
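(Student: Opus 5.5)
Since $\zeta(s)=\frac{1}{s-1}+g(s)$ with $g$ entire, we have
\[
\zeta^{(m)}(1+2\delta)=\frac{(-1)^m m!}{(2\delta)^{m+1}}+g^{(m)}(1+2\delta).
\]
Substituting this into \eqref{eq:positive operator A} and multiplying by $2\delta$, the pole part gives exactly
\[
2\delta\,\frac{\delta^{m}\boldsymbol\rho^{\alpha+\beta}}{\alpha!\beta!}\cdot\frac{(-1)^m m!}{(2\delta)^{m+1}}
=(-1)^m\frac{m!}{\alpha!\beta!}\prod_j\Big(\frac{\rho_j}{2}\Big)^{\alpha_j+\beta_j}
=(\mathcal H_{\boldsymbol\rho})_{\alpha,\beta},
\]
where $m=|\alpha|+|\beta|$. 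Hence
\[
2\delta A_{\delta,\boldsymbol\rho}-\mathcal H_{\boldsymbol\rho}=2\delta E_{\delta,\boldsymbol\rho},
\qquad
(E_{\delta,\boldsymbol\rho})_{\alpha,\beta}=\frac{\delta^{m}\boldsymbol\rho^{\alpha+\beta}}{\alpha!\beta!}\,g^{(m)}(1+2\delta).
\]
So the whole theorem reduces to showing $\sup_{\boldsymbol\rho\in B_d}\|E_{\delta,\boldsymbol\rho}\|\le C_\eta/2$ for $0<\delta\le\eta$.

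To bound $E$, I will use Cauchy estimates for the entire function $g$. Let $K=\max\{|g(z)|: |z-1|\le 2\eta+r\}$ for a radius $r$ chosen below. Then $|g^{(m)}(1+2\delta)|\le K\,m!/r^m$. The entries are then dominated by
\[
K\,\frac{m!}{\alpha!\beta!}\Big(\frac{\delta}{r}\Big)^m\boldsymbol\rho^{\alpha+\beta}.
\]
Grouping by $|\alpha|=N$ and $|\beta|=M$, and writing $\frac{(N+M)!}{\alpha!\beta!}\boldsymbol\rho^{\alpha+\beta}=\binom{N+M}{N}R_{\boldsymbol\rho}^{N+M}w_\alpha w_\beta$ as in Lemma~\ref{lem:fp-boundedness-H-rho}, I get
\[
|E|\le K\,V^*\big(\binom{N+M}{N}t^{N+M}\big)V
\quad\text{entrywise},\qquad t=\frac{\delta R_{\boldsymbol\rho}}{r}\le\frac{\eta}{r}.
\]
The right-hand side is $K\,V^*H_tV$ with $H_t$ as in Section~\ref{Sec:2}. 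Choose $r=2\eta$, so that $t\le 1/2$. Because a matrix is bounded in norm by any entrywise nonnegative majorant (the standard fact $\|A\|\le\||A|\|$), Theorem~\ref{Thm:HO} and the contractivity of $V$ give
\[
\|E_{\delta,\boldsymbol\rho}\|\le K\|H_t\|\le 2K,
\]
uniformly in $\boldsymbol\rho$ and $\delta\le\eta$.

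One subtlety needs care: the majorant must be taken entrywise before $V$ is applied. Specifically, $|E_{\alpha\beta}|\le K w_\alpha w_\beta (H_t)_{NM}$, and the matrix $(w_\alpha w_\beta (H_t)_{NM})$ equals $V^*H_tV$ with nonnegative entries, so the Schur-type comparison applies. If $R_{\boldsymbol\rho}=0$, only the $(0,0)$ entry survives and the bound is trivial. The alternative would be a Hilbert–Schmidt estimate, but at the boundary value $t=1/2$ that route fails. That failure is why I use the exact norm of $H_{1/2}$ from Theorem~\ref{Thm:HO} (or, if preferred, take $r=4\eta$ so that $t\le1/4$ and the Hilbert–Schmidt route works as well). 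This uniform bound is the main step.

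For the consequence, Proposition~\ref{prop:fp-coefficient-realization} gives
\[
\|C_{\varphi_{\delta,\boldsymbol\rho}}\|^2=\|T_{\delta,\boldsymbol\rho}\|^2=\|A_{\delta,\boldsymbol\rho}\|.
\]
The reverse triangle inequality then yields
\[
\big|\,\|2\delta A_{\delta,\boldsymbol\rho}\|-\|\mathcal H_{\boldsymbol\rho}\|\,\big|\le\|2\delta A_{\delta,\boldsymbol\rho}-\mathcal H_{\boldsymbol\rho}\|\le C_\eta\delta,
\]
with $C_\eta=4K$.
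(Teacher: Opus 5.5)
Your proof is correct. Its skeleton is the same as the paper's: you split off the pole of $\zeta$ through the entire function $g$ and apply Cauchy estimates to $g^{(m)}(1+2\delta)$. The difference is in how you bound the error operator.

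The paper takes Cauchy radius $3\eta$ and works in Hilbert--Schmidt norm. It uses the multinomial inequality $\sum_{|\gamma|=m}\bigl(\frac{m!}{\gamma!}x^\gamma\bigr)^2\le(x_1+\cdots+x_{2d})^{2m}$, where $x_1+\cdots+x_{2d}=2\delta R_{\boldsymbol\rho}/(3\eta)\le 2/3$. This gives $\|E_{\delta,\boldsymbol\rho}\|\le\|E_{\delta,\boldsymbol\rho}\|_{\mathrm{HS}}\le\frac{6M_\eta}{\sqrt5}\,\delta$.

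You instead dominate the error entrywise by $K\,V^*H_tV$ with $t=\delta R_{\boldsymbol\rho}/(2\eta)\le 1/2$. You then combine the contractivity of $V$ from Lemma~\ref{lem:fp-boundedness-H-rho} with $\|H_t\|\le 2$. For this purpose the bound $\|H_t\|\le 2$ already follows from Schur's test with constant weights, so you do not need the exact norm.

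The paper's route buys two things. It is self-contained and elementary, and it shows the error is Hilbert--Schmidt, uniformly $O(\delta)$ even in that stronger norm. Its cost is that the Cauchy radius must be strictly larger than $2\eta$ so the geometric series converges. You correctly note that at $t=1/2$ the Hilbert--Schmidt norm of the one-prime majorant $H_{1/2}$ diverges, since $\sum_m\binom{2m}{m}4^{-m}=\infty$.

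Your route buys three things. It shows that the error is controlled by a rescaled copy of the same weighted Hankel model that produces $\mathcal H_{\boldsymbol\rho}$. It tolerates the borderline radius $r=2\eta$. It therefore needs $g$ only on the disc $|z-1|\le 4\eta$, rather than on the set $K_\eta\subset\{|z-1|\le 5\eta\}$. The price is that it relies on the Section~\ref{Sec:2} Hankel bound and on the factorization through $V$.
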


\begin{proof}

For any $\delta>0,\; \boldsymbol{\rho}\in B_d,$ Equation \eqref{eq:positive operator A} gives,
\[
(A_{\delta,\boldsymbol{\rho}})_{\alpha,\beta}
=
\frac{
\delta^m
\boldsymbol{\rho}^{\alpha+\beta}}
{\alpha!\beta!}
\zeta^{(m)}(1+2\delta), \qquad \text{ where } m=|\alpha|+|\beta|.
\]
Define
\[
g(z)
=
\zeta(z)-\frac{1}{z-1}.
\]
Then \(g\) is an entire function and, for $m\in\mathbb{N}_{0},$ we have
\[
\zeta^{(m)}(z)
=
\frac{(-1)^m m!}{(z-1)^{m+1}}
+
g^{(m)}(z).
\]
Therefore
\[
\begin{aligned}
2\delta(A_{\delta,\boldsymbol{\rho}})_{\alpha,\beta}
&=
2\delta
\frac{
\delta^m
\boldsymbol{\rho}^{\alpha+\beta}}
{\alpha!\beta!}
\left(
\frac{(-1)^m m!}{(2\delta)^{m+1}}
+
g^{(m)}(1+2\delta)
\right)                                                     \\
&=
(-1)^m
\frac{m!}{\alpha!\beta!}
\prod_{j=1}^{d}
\left(\frac{\rho_j}{2}\right)^{\alpha_j+\beta_j}
+
(E_{\delta,\boldsymbol{\rho}})_{\alpha,\beta}.
\end{aligned}
\]
The first term in the above sum is exactly $(\mathcal{H}_{\boldsymbol{\rho}})_{\alpha,\beta}$ and
\[
(E_{\delta,\boldsymbol{\rho}})_{\alpha,\beta}
=
2\delta
\frac{
\delta^m
\boldsymbol{\rho}^{\alpha+\beta}}
{\alpha!\beta!}
g^{(m)}(1+2\delta).
\]
Thus, entrywise,
\[
2\delta A_{\delta,\boldsymbol{\rho}}
-
\mathcal{H}_{\boldsymbol{\rho}}
=
E_{\delta,\boldsymbol{\rho}}.
\]
By Lemma \ref{lem:fp-boundedness-H-rho} and Proposition
\ref{prop:fp-coefficient-realization},
$\mathcal{H}_{\boldsymbol{\rho}}$ and
$A_{\delta,\boldsymbol{\rho}} = T_{\delta,\boldsymbol{\rho}} T_{\delta,\boldsymbol{\rho}}^{*}$
are  bounded.
Hence $E_{\delta,\boldsymbol{\rho}}$ defines a bounded operator, and \[
2\delta A_{\delta,\boldsymbol{\rho}}
-
\mathcal{H}_{\boldsymbol{\rho}}
=
E_{\delta,\boldsymbol{\rho}}.
\]
It remains to estimate
$E_{\delta,\boldsymbol{\rho}}$ in the operator norm.

Fix $\eta>0$.
Define the set
\[
K_{\eta} =\bigcup_{0\leq u \leq \eta}\left\{z\in\mathbb{C}:|z-(1+2u)|=3\eta
\right\}.
\]
Since \(g\) is an entire function and \(K_\eta\) is compact, the number $M_\eta := \sup_{z\in K_\eta}|g(z)|$ is finite.
For $0<\delta\leq \eta,$ applying  Cauchy's estimate  to \(g\) on the circle
$|z-(1+2\delta)|=3\eta,$
gives
\[
|g^{(m)}(1+2\delta)|
\leq
M_\eta\frac{m!}{(3\eta)^m}.
\]
Hence, for  $0<\delta\leq \eta,$
\[
|(E_{\delta,\boldsymbol{\rho}})_{\alpha,\beta}|
\leq
2\delta M_\eta
\frac{m!}{\alpha!\beta!}
\prod_{j=1}^{d}
\left(\frac{\delta\rho_j}{3\eta}\right)^{\alpha_j+\beta_j}.
\]
Next we compute the Hilbert--Schmidt norm of the operator $E_{\delta,\rho},$ given by
\[
\left\|E_{\delta,\boldsymbol{\rho}}\right\|_{\mathrm{HS}}^2
=
\sum_{\alpha\in\mathbb{N}_{0}^{d}}
\sum_{\beta\in\mathbb{N}_{0}^{d}}
|(E_{\delta,\boldsymbol{\rho}})_{\alpha,\beta}|^{2}.
\]
Define $\gamma=(\alpha_1, \cdots,\alpha_d, \beta_1, \cdots, \beta_d)\in \mathbb{N}_{0}^{2d}.$ Then $|\gamma|=|\alpha|+|\beta|=m.$

Set
\[
x_j=\frac{\delta\rho_j}{3\eta},
\text{ and }
x_{d+j}=\frac{\delta\rho_j}{3\eta},
\qquad
1\leq j\leq d.
\]
Then the Hilbert--Schmidt norm is
\[
\left\|E_{\delta,\boldsymbol{\rho}}\right\|_{\mathrm{HS}}^2\leq(2\delta M_{\eta})^2\sum_{m=0}^{\infty}\sum_{|\gamma|=m}\left(\frac{m!}{\gamma!}x^{\gamma} \right)^2.
\]
On applying multinomial theorem, for every $m\in\mathbb{N}_{0},$ we have
\[
\sum_{\substack{\gamma\in\mathbb{N}_{0}^{2d}\\|\gamma|=m}}
\left(
\frac{m!}{\gamma!}
x_1^{\gamma_1}\cdots x_{2d}^{\gamma_{2d}}
\right)^2                                                       \leq
\left(
\sum_{\substack{\gamma\in\mathbb{N}_{0}^{2d}\\|\gamma|=m}}
\frac{m!}{\gamma!}
x_1^{\gamma_1}\cdots x_{2d}^{\gamma_{2d}}
\right)^2
 =(x_1+\cdots+x_{2d})^{2m}.
\]
Since
$x_1+\cdots+x_{2d} = \frac{2\delta R_{\boldsymbol{\rho}}}{3\eta},$
we obtain
\[
\left\|E_{\delta,\boldsymbol{\rho}}\right\|_{\mathrm{HS}}^2\leq
(2\delta M_\eta)^2
\sum_{m=0}^{\infty}
\left(
\frac{2\delta R_{\boldsymbol{\rho}}}{3\eta}
\right)^{2m}
\]
For any $\boldsymbol{\rho}\in B_d,$ we have $R_{\boldsymbol{\rho}}\leq1$ and  consequently, $\frac{2\delta R_{\boldsymbol{\rho}}}{3\eta}\leq 2/3$. Therefore
\[
\left\|E_{\delta,\boldsymbol{\rho}}\right\|
\leq
\left\|E_{\delta,\boldsymbol{\rho}}\right\|_{\mathrm{HS}}
\leq
\frac{2\delta M_\eta}
{\sqrt{1-\left(2\delta R_{\boldsymbol{\rho}}/3\eta\right)^2}}\leq\frac{6M_\eta}{\sqrt{5}}\delta.
\]
Thus
\[
\left\|
2\delta A_{\delta,\boldsymbol{\rho}}
-
\mathcal{H}_{\boldsymbol{\rho}}
\right\|
\leq
C_\eta\delta, \qquad \text{where } C_\eta=\frac{6M_\eta}{\sqrt{5}}.
\]
Since $\boldsymbol{\rho}\in B_d$ was arbitrary and
\[
\left|
2\delta\|C_{\varphi_{\delta,\boldsymbol\rho}}\|^2
-
\|\mathcal H_{\boldsymbol\rho}\|
\right|
\le
\left\|
2\delta A_{\delta,\boldsymbol\rho}
-
\mathcal H_{\boldsymbol\rho}
\right\|,
\]
the claimed uniform estimate follows.

\end{proof}
As an immediate consequence of Theorem \ref{main}, the following result holds.

\begin{corollary}\label{cor}
For  any $\boldsymbol{\rho}\in B_d$, we have
\[
2\delta\,
\bigl\|C_{\varphi_{\delta,\boldsymbol\rho}}\bigr\|^2
\longrightarrow
\|\mathcal{H}_{\boldsymbol{\rho}}\|
\qquad \text{ as }\delta\downarrow0.
\]

\end{corollary}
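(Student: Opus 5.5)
This follows directly from Theorem~\ref{main}. Fix $\boldsymbol\rho\in B_d$ and choose any $\eta>0$, say $\eta=1$. Theorem~\ref{main} then gives a constant $C_\eta$, independent of $\delta$ and $\boldsymbol\rho$, such that for all $0<\delta\le\eta$
\[
\left|2\delta\|C_{\varphi_{\delta,\boldsymbol\rho}}\|^2-\|\mathcal H_{\boldsymbol\rho}\|\right|
\le
\sup_{\boldsymbol\rho'\in B_d}\left|2\delta\|C_{\varphi_{\delta,\boldsymbol\rho'}}\|^2-\|\mathcal H_{\boldsymbol\rho'}\|\right|
\le C_\eta\delta .
\]
Letting $\delta\downarrow0$, the right-hand side tends to zero. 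This proves the stated convergence, and in fact at rate $O(\delta)$ and uniformly in $\boldsymbol\rho$.

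The only background facts used are those underlying the consequence clause of Theorem~\ref{main}. First, $\|C_{\varphi_{\delta,\boldsymbol\rho}}\|^2=\|T_{\delta,\boldsymbol\rho}\|^2=\|A_{\delta,\boldsymbol\rho}\|$, by Proposition~\ref{prop:fp-coefficient-realization} together with the $C^*$-identity. Second, the reverse triangle inequality gives $\bigl|\|X\|-\|Y\|\bigr|\le\|X-Y\|$ for the bounded operators $X=2\delta A_{\delta,\boldsymbol\rho}$ and $Y=\mathcal H_{\boldsymbol\rho}$, where $Y$ is bounded by Lemma~\ref{lem:fp-boundedness-H-rho}. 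Both facts are already established, so there is no genuine obstacle.

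The degenerate case $R_{\boldsymbol\rho}=0$ is also covered. Here $\mathcal H_{\boldsymbol 0}$ is the rank-one operator of norm $1$, and $\varphi_{\delta,\boldsymbol0}\equiv\tfrac12+\delta$ is constant, so $\|C_\varphi\|^2=\zeta(1+2\delta)$. Then $2\delta\zeta(1+2\delta)\to1$ directly from the simple pole of $\zeta$ at $1$ with residue $1$. This is consistent with the general argument, which needs no case split.
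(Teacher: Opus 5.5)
Your proposal is correct and follows the paper's route exactly: the paper also obtains this corollary as an immediate consequence of the uniform estimate $\sup_{\boldsymbol\rho\in B_d}\bigl|2\delta\|C_{\varphi_{\delta,\boldsymbol\rho}}\|^2-\|\mathcal H_{\boldsymbol\rho}\|\bigr|\le C_\eta\delta$ in Theorem~\ref{main}. Your separate check of the degenerate case $\boldsymbol\rho=\mathbf 0$, via $2\delta\,\zeta(1+2\delta)\to1$, is a harmless consistency check and is not needed.
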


\medskip
\noindent\textbf{The one-prime case.}
We now specialize the critical limit to a one-prime direction; so we consider $\phi(s)=\sigma+r2^{-s},$ and, as a consequence of Corollary \ref{cor}, obtain the following asymptotic formula.

\begin{figure}[htbp]
\begin{center}
\begin{tikzpicture}

\begin{axis}[
    axis lines=left,
    xmin=0, xmax=2.5,
    ymin=0, ymax=2,
    xlabel={$\sigma$},
    ylabel={$r$},
    xtick={0.5,1,1.5,2},
    ytick={0,0.5,1,1.5,2},
    domain=0:2,
    samples=200,
    clip=false
]

\addplot[name path=upper, domain=0.5:2.5] {x - 0.5};
\addplot[name path=lower, domain=0:2.5] {0};
\addplot[fill=blue!10] fill between[of=upper and lower, soft clip={domain=0.5:2.5}];

\addplot[ thick, orange!80!black, domain=0.5:2.5] {x - 0.5};

\addplot[thick, blue!70!black, domain=0.5:2.5] {0.8*(x - 0.5)};
\addplot[thick, green!70!black, domain=0.5:2.5] {0.5*(x - 0.5)};
\addplot[thick, red!70!black, domain=0.5:2.5] {0.2*(x - 0.5)};

\node[orange!80!black, rotate=46]
    at (axis cs:1.6,1.3) {$r=\sigma-0.5$};
\node[blue!70!black, rotate=40]
    at (axis cs:1.9,1.25) {$r=0.8(\sigma-0.5)$};

\node[green!70!black, rotate=29]
    at (axis cs:1.65,0.68) {$r=0.5(\sigma-0.5)$};

\node[red!70!black, rotate=13]
    at (axis cs:1.6,0.32) {$r=0.2(\sigma-0.5)$};

\addplot[
    only marks,
    mark=*,
    mark size=2.5pt,
    black
] coordinates {(0.5,0)};

\draw[->, orange!80!black, thick]
    (axis cs:1.30,0.8) -- (axis cs:1.20,0.7);

\draw[->, blue!70!black, thick]
    (axis cs:1.55,0.84) -- (axis cs:1.25,0.60);

\draw[->, green!70!black, thick]
    (axis cs:1.55,0.525) -- (axis cs:1.25,0.375);

\draw[->, red!70!black, thick]
    (axis cs:1.55,0.21) -- (axis cs:1.25,0.15);
\draw[->, black!70!black, thick]
    (axis cs:1.26,0)-- (axis cs: 1.25,0);

\end{axis}

\end{tikzpicture}
\end{center}
\caption{Scaling lines approaching the boundary point $(\frac12,0)$.}
\end{figure}

\begin{corollary}\label{cor: affine}
Fix $\rho \in[0,1].$ For $\sigma>1/2$, take $r=\rho(\sigma-1/2)$ . Then
\[
(2\sigma-1)\,\|C_{\sigma + r{2^{-s}}}\|^2 \;\longrightarrow\; \|\widetilde{H}_{\rho/2}\|
\;=\;
\frac{2}{1 + \sqrt{1 - \rho^2}}\qquad \text{ as }\sigma\to \frac{1}{2}.
\]

\end{corollary}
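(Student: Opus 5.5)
This is the case $d=1$, $p_1=2$ of Corollary~\ref{cor}. With $\delta=\sigma-\tfrac12$ and $\boldsymbol\rho=\rho\in[0,1]=B_1$, the symbol $\sigma+r2^{-s}$ with $r=\rho(\sigma-\tfrac12)$ is exactly $\varphi_{\delta,\rho}$ from \eqref{eq:scaling symbol}. Also $2\delta=2\sigma-1$, and $\sigma\to\tfrac12$ corresponds to $\delta\downarrow0$. Corollary~\ref{cor} (equivalently Theorem~\ref{main} with any fixed $\eta$) then gives
\[
(2\sigma-1)\|C_{\sigma+r2^{-s}}\|^2\longrightarrow\|\mathcal H_{\rho}\|,
\]
with an $O(\sigma-\tfrac12)$ error. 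The remaining task is to identify $\mathcal H_\rho$ when $d=1$.

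For $d=1$, multi-indices are integers $j,k\in\mathbb N_0$, so $\alpha!\beta!=j!k!$ and $(|\alpha|+|\beta|)!/(\alpha!\beta!)=\binom{j+k}{j}$. Formula \eqref{eq:multivariate HO} therefore reads
\[
(\mathcal H_\rho)_{j,k}=(-1)^{j+k}\binom{j+k}{j}\left(\tfrac{\rho}{2}\right)^{j+k},
\]
which is exactly the matrix of the signed Hankel operator $\widetilde H_{\rho/2}$ of Corollary~\ref{cor:SHO}. The parameter $t=\rho/2$ lies in $[0,\tfrac12]$, so Corollary~\ref{cor:SHO} (through Theorem~\ref{Thm:HO}) gives
\[
\|\widetilde H_{\rho/2}\|=\frac{2}{1+\sqrt{1-4(\rho/2)^2}}=\frac{2}{1+\sqrt{1-\rho^2}}.
\]
This is consistent with the proof of Lemma~\ref{lem:fp-boundedness-H-rho}: when $d=1$ we have $\theta_1=1$ and $w_\alpha=1$, so $V$ is the identity and $\mathcal H_\rho=\widetilde H_{\rho/2}$ exactly. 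The degenerate case $\rho=0$ gives the rank-one operator with norm $1$, which agrees with the formula.

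No step here is a real obstacle. The only care needed is the bookkeeping check that the one-prime symbol, the scaling $r=\rho\delta$, and the normalisation $2\delta=2\sigma-1$ match the hypotheses of Corollary~\ref{cor}. In particular, the choice of the prime $2$ is irrelevant, since $\mathcal H_\rho$ does not depend on which primes are used.
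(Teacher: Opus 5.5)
Your proposal is correct and is the paper's own argument: specialize Corollary~\ref{cor} to $d=1$, $p_1=2$, observe that $\mathcal H_\rho$ coincides entrywise with $\widetilde H_{\rho/2}$, and read off the norm from Theorem~\ref{Thm:HO} via Corollary~\ref{cor:SHO}. Your extra checks (that $2\delta=2\sigma-1$, the $\rho=0$ case, and $V=I$ when $d=1$) spell out what the paper leaves to the reader.
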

\begin{proof}
For $\phi=\sigma+r2^{-s}$, observe that $\mathcal{H}_{\boldsymbol{\rho}}$ is simply equal to $\widetilde{H}_{{\rho}/2}$. Hence, the proof follows from Corollary \ref{cor} and Theorem \ref{Thm:HO}.
\end{proof}
Figure $1$ illustrates the different scaling paths $r=\rho(\sigma-1/2)$ along which the boundary point $(1/2,0)$ is approached. The limiting value depends on the choice of $\rho$, giving different limiting behavior along different paths.

The preceding corollary shows that equality holds in Lemma~\ref{lem:fp-boundedness-H-rho} whenever at most one component of $\boldsymbol{\rho}$ is nonzero. A natural question is whether any further equality cases can occur. The following theorem gives a complete
characterization of the equality cases.

\begin{theorem}
    For $\boldsymbol{\rho}\in B_d\setminus\{\boldsymbol{0}\},$ we have \[
\lambda(R_{\boldsymbol\rho})
-
\|\mathcal H_{\boldsymbol\rho}\|
\geq
\left(1-Q(\boldsymbol\rho)\right)
\left(\lambda(R_{\boldsymbol\rho})-1\right),
\] where
\[
Q(\boldsymbol\rho)
=
\frac{\sum_{j=1}^d\rho_j^2}
{\left(\sum_{j=1}^d\rho_j\right)^2}, \qquad \lambda(R_{\boldsymbol{\rho}})
=
\frac{2}{1+\sqrt{1-R_{\boldsymbol{\rho}}^2}}.
\]
Consequently, \[
\|\mathcal H_{\boldsymbol\rho}\|
=
\lambda(R_{\boldsymbol\rho})
\]
if and only if at most one component of \(\boldsymbol\rho\) is nonzero.
\end{theorem}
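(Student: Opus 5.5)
The plan is to work directly with the factorization $\mathcal H_{\boldsymbol\rho}=V^*\widetilde H_{R_{\boldsymbol\rho}/2}V$ from the proof of Lemma~\ref{lem:fp-boundedness-H-rho}. I will split $V$ into a diagonal part, carrying the size of the weights $w_\alpha$ in each total degree, and a coisometric part. Write $t=R_{\boldsymbol\rho}/2$ and $\lambda=\lambda(R_{\boldsymbol\rho})$.

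\textbf{Step 1: diagonal--coisometry splitting.} For each degree $N$, let $w^{(N)}=(w_\alpha)_{|\alpha|=N}$ and set $c_N=\|w^{(N)}\|_2$. Then $VV^*$ is diagonal on $\ell^2(\mathbb N_0)$ with entries $c_N^2=\sum_{|\alpha|=N}w_\alpha^2$. Hence $V=DW$, where $D=\mathrm{diag}(c_N)$ and $W$ is a coisometry, i.e.\ $WW^*=I$. The degenerate case $c_N=0$ does not occur, since $c_N\ge$ the largest $w_\alpha$, which is positive. It follows that
\[
\|\mathcal H_{\boldsymbol\rho}\|=\|W^*D\widetilde H_tDW\|=\|D\widetilde H_tD\|.
\]
Since $\widetilde H_t=UH_tU$ and $H_t$ is positive by Proposition~\ref{prop: hbd}, $\widetilde H_t$ is positive. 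Therefore
\[
\|D\widetilde H_tD\|=\|\widetilde H_t^{1/2}D^2\widetilde H_t^{1/2}\|.
\]

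\textbf{Step 2: collision bound (the key obstacle).} I must show $c_0=1$ and $c_N^2\le Q(\boldsymbol\rho)$ for all $N\ge1$. Note that $w^{(N)}$ is the law of $X_1+\cdots+X_N$, where the $X_i$ are i.i.d.\ on $\{e_1,\dots,e_d\}\subset\mathbb Z^d$ with $\Pr(X_i=e_j)=\theta_j$. Thus $w^{(N)}=\theta^{*N}$, and $c_N^2$ is its collision probability. Young's inequality $\|p*q\|_2\le\|p\|_2\|q\|_1$ gives
\[
\|\theta^{*N}\|_2\le\|\theta\|_2\,\|\theta^{*(N-1)}\|_1=\|\theta\|_2=\sqrt{Q(\boldsymbol\rho)}.
\]
The equality $\|\theta\|_2^2=\sum\theta_j^2=Q(\boldsymbol\rho)$ holds by definition. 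Consequently $D^2\le P_0+Q(I-P_0)$, where $P_0$ is the projection onto $e_0$.

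\textbf{Step 3: norm estimate and equality cases.} By positivity and Step 2,
\[
\widetilde H_t^{1/2}D^2\widetilde H_t^{1/2}\le Q\,\widetilde H_t+(1-Q)\,\widetilde H_t^{1/2}P_0\widetilde H_t^{1/2}.
\]
The last operator is rank one, with norm $\langle \widetilde H_te_0,e_0\rangle=1$. Using $\|\widetilde H_t\|=\lambda$ from Corollary~\ref{cor:SHO}, we obtain $\|\mathcal H_{\boldsymbol\rho}\|\le Q\lambda+(1-Q)$. This rearranges to $\lambda-\|\mathcal H_{\boldsymbol\rho}\|\ge(1-Q)(\lambda-1)$.

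For the equality characterization, use that $R_{\boldsymbol\rho}>0$ forces $\lambda>1$. If two or more components are nonzero, then $Q<1$, so the gap is strictly positive. If exactly one component is nonzero, then $w_\alpha$ is a single point mass in each degree, so $D=I$ and $\|\mathcal H_{\boldsymbol\rho}\|=\|\widetilde H_t\|=\lambda$; this is consistent with Corollary~\ref{cor: affine}.

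This route bypasses the Brevig--Perfekt comparison entirely. If that were needed, it would only recover the upper bound already given by Lemma~\ref{lem:fp-boundedness-H-rho}. The real work is the collision inequality in Step 2.
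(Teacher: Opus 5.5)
Your argument is correct, and it takes a genuinely different route from the paper's.

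\textbf{The paper's route.} The paper never argues intrinsically on $\mathcal H_{\boldsymbol\rho}$ here. It invokes the $Q$-weighted comparison of Brevig and Perfekt \cite[Theorem~5]{BP1},
$\|C_{\varphi_{\delta,\boldsymbol\rho}}f\|^2\le(1-Q)\,|f(\tfrac12+\delta)|^2+Q\,\|C_{\psi_{\delta,R_{\boldsymbol\rho}}}f\|^2$.
It then bounds the point evaluation by $\zeta(1+2\delta)$, multiplies by $2\delta$, and passes to the limit using Corollary~\ref{cor} and the one-prime asymptotic of Corollary~\ref{cor: affine}.

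\textbf{Your route.} You stay entirely at the level of the limit operator. Your splitting $V=DW$ is the total-degree reduction of Theorem~\ref{thm:boundary-Hankel-reduction}, and your $c_N$ are exactly the paper's $c_N(\boldsymbol\rho)$. Your new ingredient is the monotone collision bound $c_N\le\|\theta\|_2=c_1=\sqrt{Q(\boldsymbol\rho)}$ for $N\ge1$, obtained from Young's inequality. The paper only records $c_N\le1$ and $c_N\to0$.

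Your operator inequality is the limiting counterpart of the Brevig--Perfekt inequality. The rank-one degree-zero term plays the role of $2\delta|f(\frac12+\delta)|^2$: the constant coefficient of $C_\varphi f$ is $f(\sigma)$, and $2\delta\zeta(1+2\delta)\to1$. The term $Q\widetilde H_t$ plays the role of $2\delta Q\|C_\psi f\|^2$.

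\textbf{What each approach buys.} Your route is self-contained: it needs no appeal to \cite{BP1}, to Theorem~\ref{main}, or to the one-prime limit. It also locates the deficit explicitly in the degree-one collision norm, and it leaves room for sharper bounds using the whole sequence $(c_N)$. In fact, $a_N(\boldsymbol r)=c_N\,R^N/N!$ with the same collision norms of $\boldsymbol r/R$. So the same three lines applied to Proposition~\ref{prop:finite-prime-total-degree-reduction} give $\|C_\varphi\|^2\le Q\|C_\psi\|^2+(1-Q)\zeta(2\sigma)$ directly at each fixed $\sigma$ with $R<\sigma-\frac12$, where $\psi=\sigma+R\,2^{-s}$. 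This is precisely the norm-level consequence of \cite{BP1} that the paper feeds into the limit. The paper's route, in turn, presents the inequality as the limit of a finite-$\delta$ statement. That is its thematic point: the fixed-parameter concentration principle survives the critical limit.

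\textbf{One correction to your closing remark.} The paper does not use Brevig--Perfekt only in the scalar form $\|C_\varphi\|\le\|C_\psi\|$. The $Q$-weighted form it uses yields exactly your bound $\|\mathcal H_{\boldsymbol\rho}\|\le1+Q(\lambda-1)$, not merely Lemma~\ref{lem:fp-boundedness-H-rho}.
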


\begin{proof}
    Let $\delta>0.$ Then $C_{\varphi_{\delta,\boldsymbol\rho}}$ is bounded on $\mathcal{H}^2$. Applying  Theorem~5 in \cite{BP1} for symbol $\varphi_{\delta,\boldsymbol\rho}$ gives
\[
\|C_{\varphi_{\delta,\boldsymbol\rho}}f\|_{\mathcal H^2}^2
\leq
(1-Q(\boldsymbol\rho))
\left|
f\left(\frac12+\delta\right)
\right|^2
+
Q(\boldsymbol\rho)
\|C_{\psi_{\delta,R_{\boldsymbol\rho}}}f\|_{\mathcal H^2}^2,
\]
where \[
\psi_{\delta,R_{\boldsymbol{\rho}}}(s)
=
\frac12+\delta+\delta R_{\boldsymbol{\rho}}2^{-s}.
\]
By the Cauchy-Schwarz inequality, we have $$\sup_{\|f\|_{\mathcal{H}^2}=1}\left|
f\left(\frac12+\delta\right)
\right|^2\leq \zeta(1+2\delta).$$
Thus
\[
\|C_{\varphi_{\delta,\boldsymbol\rho}}\|_{\mathcal H^2}^2
\leq
(1-Q(\boldsymbol\rho))
\zeta(1+2\delta)
+
Q(\boldsymbol\rho)
\|C_{\psi_{\delta,R_{\boldsymbol\rho}}}\|_{\mathcal H^2}^2,
\]
In view of Corollary \ref{cor} and \ref{cor: affine}, multiplying  the above equation by $2\delta$ and letting   $\delta \to 0$, we obtain
\[
\|\mathcal H_{\boldsymbol\rho}\|
\leq
1+
Q(\boldsymbol\rho)
\left(
\lambda(R_{\boldsymbol\rho})-1
\right).
\]
Consequently, we have
\[
\lambda(R_{\boldsymbol\rho})
-
\|\mathcal H_{\boldsymbol\rho}\|
\geq
\left(1-Q(\boldsymbol\rho)\right)
\left(\lambda(R_{\boldsymbol\rho})-1\right).
\]
Suppose that equality holds in the limiting norm estimate. Since
$R_{\boldsymbol\rho}>0$, we have
$\lambda(R_{\boldsymbol\rho})>1$, and hence
\[
Q(\boldsymbol\rho)=1.
\]
Equivalently,
\[
\sum_{j=1}^d\rho_j^2
=
\left(\sum_{j=1}^d\rho_j\right)^2.
\]
Since $\rho_j\ge0$, this holds if and only if at most one component of
$\boldsymbol\rho$ is nonzero. Conversely, if at most one component of
$\boldsymbol\rho$ is nonzero,
by  Proposition~\ref{prop:fp-coefficient-realization}, the norm of a one-prime coefficient model is independent of the chosen prime. After
relabelling the active coordinate, Corollary~\ref{cor: affine} therefore
gives equality.
\end{proof}

The preceding theorem characterizes the equality cases in the limiting
norm estimate. We now describe the structure of
$\mathcal H_{\boldsymbol\rho}$ according to total degree.

\begin{lemma}
\label{lem:boundary-layer-comparison}
Let $N$ be a non--negative integer. For $\alpha=(\alpha_1,\ldots,\alpha_d)\in\mathbb{N}_{0}^{d}$ and $\boldsymbol{\rho}\in B_d$, define
\[
h_N(\boldsymbol{\rho})^{2}
=
\sum_{|\alpha|=N}
\frac{1}{(\alpha!)^{2}}
\prod_{j=1}^{d}
\left(\frac{\rho_j}{2}\right)^{2\alpha_j}.
\] Then
\begin{equation} \label{eq:h-rho inequality}
h_N(\boldsymbol{\rho})
\leq
\frac{(R_{\boldsymbol{\rho}}/2)^N}{N!}.
\end{equation}
For $N\geq1$, equality holds in \eqref{eq:h-rho inequality} if and only if at most one of the $\rho_j$ is nonzero.
\end{lemma}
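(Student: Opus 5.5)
The plan is to compare $h_N(\boldsymbol\rho)^2$ with the square of the multinomial expansion of $(R_{\boldsymbol\rho}/2)^N$, as was already done in the Hilbert--Schmidt estimate in the proof of Theorem~\ref{main}. First, set $x_j=\rho_j/2\ge0$. Then
\[
h_N(\boldsymbol\rho)^2=\frac{1}{(N!)^2}\sum_{|\alpha|=N}\left(\frac{N!}{\alpha!}x^{\alpha}\right)^2 .
\]
Each summand $c_\alpha:=\frac{N!}{\alpha!}x^\alpha$ is nonnegative, and by the multinomial theorem $\sum_{|\alpha|=N}c_\alpha=(x_1+\cdots+x_d)^N=(R_{\boldsymbol\rho}/2)^N$. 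For nonnegative reals we have $\sum c_\alpha^2\le(\sum c_\alpha)^2$. This gives $h_N(\boldsymbol\rho)^2\le (R_{\boldsymbol\rho}/2)^{2N}/(N!)^2$, and taking square roots yields \eqref{eq:h-rho inequality}. The case $R_{\boldsymbol\rho}=0$ is trivial under the convention $0^0=1$.

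For the equality statement with $N\ge1$, equality in $\sum c_\alpha^2\le(\sum c_\alpha)^2$ holds exactly when all cross terms $c_\alpha c_\beta$ with $\alpha\neq\beta$ vanish. In other words, at most one $c_\alpha$ with $|\alpha|=N$ is nonzero.

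If at most one $\rho_j$ is nonzero, say only $\rho_{j_0}$, then $x^\alpha\neq0$ forces $\alpha=Ne_{j_0}$. So there is only one nonzero term, and equality holds. This also covers $\boldsymbol\rho=\boldsymbol0$, where both sides vanish for $N\ge1$.

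Conversely, suppose $\rho_i,\rho_j>0$ with $i\neq j$. Since $N\ge1$, the multi-indices $Ne_i$ and $Ne_j$ are distinct with $|\alpha|=N$, and $c_{Ne_i}=x_i^N>0$ and $c_{Ne_j}=x_j^N>0$. The cross term $2c_{Ne_i}c_{Ne_j}>0$ then makes the inequality strict.

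The only point needing care is this equality characterization. The requirement $N\ge1$ is exactly what makes $Ne_i\neq Ne_j$. For $N=0$ both sides equal $1$ regardless of $\boldsymbol\rho$, which is why $N=0$ is excluded.
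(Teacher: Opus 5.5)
Your proof is correct and follows essentially the same route as the paper: you expand $(R_{\boldsymbol\rho}/2)^N$ by the multinomial theorem into nonnegative terms and apply $\sum q_\alpha^2\le\bigl(\sum q_\alpha\bigr)^2$, and for the equality case you use the distinct multi-indices $Ne_i$ and $Ne_j$ (distinct precisely because $N\ge1$) to force strict inequality. The only cosmetic difference is that you normalize the terms as $\frac{N!}{\alpha!}x^\alpha$, whereas the paper works directly with $(\boldsymbol\rho/2)^\alpha/\alpha!$.
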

\begin{proof}
By the multinomial theorem,
\[
(R_{\boldsymbol{\rho}}/2)^N
=
(\rho_1/2+\cdots+\rho_d/2)^N
=
\sum_{|\alpha|=N}
\frac{N!}{\alpha!}(\boldsymbol{\rho}/2)^{\alpha}.
\]
Therefore
\[
\frac{(R_{\boldsymbol{\rho}}/2)^N}{N!}
=
\sum_{|\alpha|=N}
\frac{(\boldsymbol{\rho}/2)^{\alpha}}{\alpha!}.
\]
All terms in this sum are nonnegative. Hence
\[
\sum_{|\alpha|=N}
\left(\frac{(\boldsymbol{\rho}/2)^{\alpha}}{\alpha!}\right)^2
\leq
\left(
\sum_{|\alpha|=N}
\frac{(\boldsymbol{\rho}/2)^{\alpha}}{\alpha!}
\right)^2
= \left(\frac{(R_{\boldsymbol{\rho}}/2)^N}{N!}\right)^2.
\]
This is exactly
\[
h_N(\boldsymbol{\rho})^{2}
\leq
\frac{(R_{\boldsymbol{\rho}}/2)^{2N}}{(N!)^{2}}.
\]
If at most one of the \(\rho_j\)'s is nonzero, then there is at most one multi-index \(\alpha\) with $|\alpha|=N$  and
$\boldsymbol{\rho}^{\alpha}\neq 0.$ In that case, we get
\[
h_N(\boldsymbol{\rho})
=
\frac{(R_{\boldsymbol{\rho}}/2)^N}{N!}
\]
Conversely, assume that at least two of the \(\rho_j\)'s are strictly positive. Then, for every $N\geq 1,$ there are at least two multi-indices \(\alpha\) with $|\alpha|=N$ and
$\boldsymbol{\rho}^{\alpha}>0.$

For instance, if \(\rho_i>0\) and \(\rho_j>0\) with \(i\neq j\), then the multi-indices $Ne_i  \text{ and } Ne_j$
both contribute positively. Therefore the nonnegative numbers
\[
\frac{(\boldsymbol{\rho}/2)^{\alpha}}{\alpha!},
\qquad
|\alpha|=N,
\]
contain at least two strictly positive entries. For any finite family of nonnegative numbers with at least two positive entries, one has
\[
\sum q_{\alpha}^{2}
<
\left(\sum q_{\alpha}\right)^{2}.
\]
Applying this to
$q_{\alpha}=\frac{(\boldsymbol{\rho}/2)^{\alpha}}{\alpha!}$
gives
\[
h_N(\boldsymbol{\rho})^{2}
=
\sum_{|\alpha|=N}
\left(\frac{(\boldsymbol{\rho}/2)^{\alpha}}{\alpha!}\right)^2
<
\left(
\sum_{|\alpha|=N}
\frac{(\boldsymbol{\rho}/2)^{\alpha}}{\alpha!}
\right)^2
=
\frac{(R_{\boldsymbol{\rho}}/2)^{2N}}{(N!)^{2}}
\qquad
\text{for }N\geq 1.
\]

\end{proof}
For $\boldsymbol{\rho}\in B_d\setminus\{\mathbf0\}$, the above
estimate suggests the normalization
\[
c_N(\boldsymbol{\rho})
:=
\frac{h_N(\boldsymbol{\rho})}
{(R_{\boldsymbol{\rho}}/2)^N/N!},
\qquad N\in\mathbb N_0.
\]
Define the diagonal operator $D_{\boldsymbol{\rho}}$ on
$\ell^2(\mathbb N_0)$ by
\[
D_{\boldsymbol{\rho}}e_N
=
c_N(\boldsymbol{\rho})e_N,
\qquad N\in\mathbb N_0.
\]
By Lemma \ref{lem:boundary-layer-comparison}, $0<c_N(\boldsymbol{\rho})\le1,$
and hence $D_{\boldsymbol{\rho}}$ is a positive contraction and,
in particular, self-adjoint.
The following theorem provides a structural decomposition of $\mathcal{H}_{\boldsymbol{\rho}}$, reducing its analysis to that of a one-dimensional weighted Hankel operator.
\begin{theorem}
\label{thm:boundary-Hankel-reduction}
For $\boldsymbol{\rho}\in B_d$, the operator  \(\mathcal{H}_{\boldsymbol{\rho}}\) is unitarily equivalent to $\widetilde{B}_{\boldsymbol{\rho}}\oplus \mathbf{0},$ where
\[
\widetilde{B}_{\boldsymbol{\rho}}
=
\left(
(-1)^{N+M}(N+M)!h_N(\boldsymbol{\rho})h_M(\boldsymbol{\rho})
\right)_{N,M\geq 0}.
\]
Moreover, if $\boldsymbol{\rho}\neq \mathbf{0}$ then
$B_{\boldsymbol{\rho}} =
D_{\boldsymbol{\rho}}H_{\frac{R_{\boldsymbol{\rho}}}{2}}D_{\boldsymbol{\rho}},$
where $B_{\boldsymbol\rho}$ is unitarily equivalent to  $\widetilde B_{\boldsymbol\rho}$ by a diagonal unitary operator.

\end{theorem}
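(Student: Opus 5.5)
The plan is to decompose $\ell^2(\mathbb N_0^d)=\bigoplus_{N\ge0}\mathcal E_N$, where $\mathcal E_N=\operatorname{span}\{e_\alpha:|\alpha|=N\}$ is finite dimensional. The key observation is that $\mathcal H_{\boldsymbol\rho}$ has rank at most one in each block. Put $m=|\alpha|+|\beta|$. Writing $(\mathcal H_{\boldsymbol\rho})_{\alpha,\beta}=(-1)^{N+M}(N+M)!\,u_\alpha u_\beta$ with
\[
u_\alpha=\frac{(\boldsymbol\rho/2)^\alpha}{\alpha!},\qquad |\alpha|=N,\ |\beta|=M,
\]
the $(N,M)$ block of $\mathcal H_{\boldsymbol\rho}$ is $(-1)^{N+M}(N+M)!\,u^{(N)}(u^{(M)})^{T}$. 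Here $u^{(N)}=(u_\alpha)_{|\alpha|=N}\in\mathcal E_N$ and $\|u^{(N)}\|=h_N(\boldsymbol\rho)$ by definition.

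For each $N$ with $h_N(\boldsymbol\rho)>0$, set $v_N=u^{(N)}/h_N(\boldsymbol\rho)$, a unit vector. If $\boldsymbol\rho=\mathbf0$, every $h_N$ with $N\ge1$ vanishes; if $\boldsymbol\rho\neq\mathbf0$, all $h_N>0$ since some $\rho_j>0$ gives $u_{Ne_j}>0$. The vectors $v_N$ are mutually orthogonal because they lie in different $\mathcal E_N$. Let $\mathcal K=\overline{\operatorname{span}}\{v_N\}$ and define the isometry $W:\ell^2(\mathbb N_0)\to\mathcal K$ by $We_N=v_N$. In the degenerate case $\boldsymbol\rho=\mathbf0$ only $N=0$ survives, and one may use $v_N:=u^{(N)}$ (zero) with trivial adjustment, or simply treat it directly: $\mathcal H_{\mathbf0}$ is rank one with entry $1$ at $(0,0)$ and $\widetilde B_{\mathbf0}$ is the same.

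For finitely supported $x\in\ell^2(\mathbb N_0^d)$, I would compute
\[
\mathcal H_{\boldsymbol\rho}x=\sum_N v_N\sum_M (-1)^{N+M}(N+M)!\,h_Nh_M\langle x,v_M\rangle .
\]
Thus $\mathcal H_{\boldsymbol\rho}$ annihilates $\mathcal K^\perp$, maps into $\mathcal K$, and satisfies $W^*\mathcal H_{\boldsymbol\rho}W=\widetilde B_{\boldsymbol\rho}$ entrywise. Boundedness of $\mathcal H_{\boldsymbol\rho}$ is already known from Lemma~\ref{lem:fp-boundedness-H-rho}. Hence $\widetilde B_{\boldsymbol\rho}=W^*\mathcal H_{\boldsymbol\rho}W$ is bounded and the matrix identities extend by density. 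Taking the unitary $\ell^2(\mathbb N_0^d)=\mathcal K\oplus\mathcal K^\perp\to\ell^2(\mathbb N_0)\oplus\mathcal K^\perp$ given by $W^*\oplus I$ yields $\mathcal H_{\boldsymbol\rho}\simeq\widetilde B_{\boldsymbol\rho}\oplus\mathbf 0$.

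For the second part, conjugate by the diagonal unitary $Ue_N=(-1)^Ne_N$, as in Corollary~\ref{cor:SHO}, to remove the signs. This gives $B_{\boldsymbol\rho}=U\widetilde B_{\boldsymbol\rho}U$ with entries $(N+M)!\,h_Nh_M$. Substituting $h_N=c_N(R_{\boldsymbol\rho}/2)^N/N!$ then gives
\[
(B_{\boldsymbol\rho})_{N,M}=c_N c_M\binom{N+M}{N}\Big(\frac{R_{\boldsymbol\rho}}2\Big)^{N+M}=(D_{\boldsymbol\rho}H_{R_{\boldsymbol\rho}/2}D_{\boldsymbol\rho})_{N,M}.
\]
Since $R_{\boldsymbol\rho}/2\le\frac12$, Proposition~\ref{prop: hbd} applies to $H_{R_{\boldsymbol\rho}/2}$, and $D_{\boldsymbol\rho}$ is a contraction, so both sides are bounded operators agreeing on matrix entries and hence equal.

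The only delicate point is the bookkeeping: turning entrywise identities into operator identities. This requires boundedness, which comes from Lemma~\ref{lem:fp-boundedness-H-rho}, and the orthonormality of the $v_N$ across degrees. I expect no real obstacle beyond this.
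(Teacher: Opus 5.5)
Your proposal is correct and follows essentially the same route as the paper. You normalize the degree-$N$ weight vectors into an orthonormal family (the paper's $\omega_N$, your $v_N$), compute $\mathcal H_{\boldsymbol\rho}$ on finitely supported vectors to show it maps into their closed span and annihilates the orthogonal complement (using boundedness from Lemma~\ref{lem:fp-boundedness-H-rho}), compress to obtain $\widetilde B_{\boldsymbol\rho}$, and finally conjugate by the sign unitary $(-1)^N$ and substitute $h_N=c_N(R_{\boldsymbol\rho}/2)^N/N!$ to get $D_{\boldsymbol\rho}H_{R_{\boldsymbol\rho}/2}D_{\boldsymbol\rho}$.
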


\begin{proof}
For $\alpha\in\mathbb{N}_{0}^{d},$ set
\[
w_{\alpha}
=
\frac{1}{\alpha!}
\prod_{j=1}^{d}
\left(\frac{\rho_j}{2}\right)^{\alpha_j}.
\]
Then
\[
h_N(\boldsymbol{\rho})^{2}
=
\sum_{|\alpha|=N}w_{\alpha}^{2}.
\]
The case  \(R_{\boldsymbol{\rho}}=0\)  is immediate, so assume henceforth that $R_{\boldsymbol{\rho}}>0$. Then,  for every \(N\) there is at least one multi-index \(\alpha\) with $|\alpha|=N$ and $w_{\alpha}>0.$
Thus $h_N(\boldsymbol{\rho})>0.$ Define
$\omega_N\in\ell^{2}(\mathbb{N}_{0}^{d})$ by
\[
(\omega_N)_{\alpha}
=
\begin{cases}
\displaystyle \frac{w_{\alpha}}{h_N(\boldsymbol{\rho})}, & |\alpha|=N,\\[6pt]
0, & |\alpha|\neq N.
\end{cases}
\]
The vectors $\omega_0,\omega_1,\omega_2,\ldots$ are orthonormal because they have disjoint total-degree supports and each has norm \(1\). Let $\mathcal{M}_{\boldsymbol{\rho}}=\overline{\operatorname{span}}\{\omega_N:N\geq 0\}$.

For $|\alpha|=N$ and $|\beta|=M,$ the matrix entry of \(\mathcal{H}_{\boldsymbol{\rho}}\) can be written as
\[
(\mathcal{H}_{\boldsymbol{\rho}})_{\alpha,\beta}
=
(-1)^{N+M}(N+M)!w_{\alpha}w_{\beta}.
\]

Let $x=(x_{\beta})_{\beta\in\mathbb{N}_{0}^{d}}$ be finitely supported. For $|\alpha|=N,$ we have
\[
\begin{aligned}
(\mathcal{H}_{\boldsymbol{\rho}}x)_{\alpha}
&=
\sum_{\beta\in\mathbb{N}_{0}^{d}}
(\mathcal{H}_{\boldsymbol{\rho}})_{\alpha,\beta}x_{\beta}  \\
&=
(-1)^Nw_{\alpha}
\sum_{M=0}^{\infty}
(-1)^M(N+M)!
\sum_{|\beta|=M}w_{\beta}x_{\beta}.
\end{aligned}
\]
Consequently, for each fixed \(N\),
\[
\sum_{|\alpha|=N}(\mathcal{H}_{\boldsymbol{\rho}}x)_{\alpha}e_{\alpha}= (-1)^N
\sum_{M=0}^{\infty}
(-1)^M(N+M)!
\sum_{|\beta|=M}w_{\beta}x_{\beta} \sum_{|\alpha|=N}w_{\alpha}e_{\alpha}
\]
Since $\sum_{|\alpha|=N}w_{\alpha}e_{\alpha}
= h_N(\boldsymbol{\rho})\omega_N,$
\[
\mathcal{H}_{\boldsymbol{\rho}}x =  \sum_{N=0}^{\infty}\left (
\sum_{M=0}^{\infty}
(-1)^{(N+M)}(N+M)!
\sum_{|\beta|=M}w_{\beta}x_{\beta} \right  )h_N(\boldsymbol{\rho})\omega_N
\]
Therefore, $\mathcal{H}_{\boldsymbol{\rho}}x\in \mathcal{M}_{\boldsymbol{\rho}}$  for every finitely supported \(x\). Since \(\mathcal{H}_{\boldsymbol{\rho}}\) is bounded (from Lemma \ref{lem:fp-boundedness-H-rho}), it follows that
$\operatorname{Ran}\mathcal{H}_{\boldsymbol{\rho}}
\subset \mathcal{M}_{\boldsymbol{\rho}}.$

Now let $x\in\mathcal{M}_{\boldsymbol{\rho}}^{\perp}.$
Then, for every $M\in\mathbb{N}_{0},$ we have
\[
0
=
\left\langle x,\omega_M\right\rangle
=
\frac{1}{h_M(\boldsymbol{\rho})}
\sum_{|\beta|=M}x_{\beta}w_{\beta}.
\]
Hence
\[
\sum_{|\beta|=M}w_{\beta}x_{\beta}=0
\qquad
\text{for every}
\qquad
M\in\mathbb{N}_{0}.
\]
Applying the preceding formula  for $\mathcal{H}_{\boldsymbol{\rho}}$ first to finitely supported vectors and then using density, we obtain

\[
\mathcal{H}_{\boldsymbol{\rho}}x=0.
\]
Thus $\mathcal{H}_{\boldsymbol{\rho}}$ annihilates
$\mathcal{M}_{\boldsymbol{\rho}}^{\perp}.$

Define $U:\ell^2(\mathbb{N}_{0})\to \mathcal{M}_{\boldsymbol{\rho}}$  by
\[
Ue_N=\omega_N,
\qquad
N\in\mathbb{N}_{0}.
\]
Since the vectors $\omega_N$ form an orthonormal basis for
$\mathcal{M}_{\boldsymbol{\rho}},$ the map \(U\) is unitary.

We compute the matrix of $U^{*}\mathcal{H}_{\boldsymbol{\rho}}U.$ For $|\alpha|=N$ and
$M\in\mathbb{N}_{0},$ we have
\[
\begin{aligned}
(\mathcal{H}_{\boldsymbol{\rho}}\omega_M)_{\alpha}=\sum_{\beta\in\mathbb{N}_0^d}(\mathcal{H}_{\boldsymbol{\rho}})_{\alpha,\beta}(\omega_M)_{\beta}
&=
\sum_{|\beta|=M}
(-1)^{N+M}(N+M)!w_{\alpha}w_{\beta}
\frac{w_{\beta}}{h_M(\boldsymbol{\rho})}                                      \\
&=
(-1)^{N+M}(N+M)!w_{\alpha}
\frac{1}{h_M(\boldsymbol{\rho})}
\sum_{|\beta|=M}w_{\beta}^{2}                              \\
&=
(-1)^{N+M}(N+M)!w_{\alpha}h_M(\boldsymbol{\rho}).
\end{aligned}
\]
Since $w_{\alpha}=h_N(\boldsymbol{\rho})(\omega_N)_{\alpha}$ for every coordinate $\alpha$ with $|\alpha|=N$,
\[
(\mathcal{H}_{\boldsymbol{\rho}}\omega_M)_{\alpha}= (-1)^{N+M}(N+M)!h_N(\boldsymbol{\rho)}h_M(\boldsymbol{\rho})(\omega_N)_{\alpha}.
\]
Then
\[
\mathcal{H}_{\boldsymbol{\rho}}\omega_M
= \sum_{N=0}^{\infty}\sum_{|\alpha|=N}(\mathcal{H}_{\boldsymbol{\rho}}\omega_M)_{\alpha}e_{\alpha}=
\sum_{N=0}^{\infty}
(-1)^{N+M}
(N+M)!h_N(\boldsymbol{\rho})h_M(\boldsymbol{\rho})
\omega_N.
\]
Equivalently, $U^{*}\mathcal{H}_{\boldsymbol{\rho}}U=\widetilde{B}_{\boldsymbol{\rho}}$, where
\[
\widetilde{B}_{\boldsymbol{\rho}}
=
\left(
(-1)^{N+M}(N+M)!h_N(\boldsymbol{\rho})h_M(\boldsymbol{\rho})
\right)_{N,M\geq 0}.
\]
Let $U_1:\ell^{2}(\mathbb{N}_{0})\to\ell^{2}(\mathbb{N}_{0})$
be the diagonal unitary $(U_1x)_N=(-1)^Nx_N.$ Then
\[
U_1\widetilde{B}_{\boldsymbol{\rho}}U_1
=
B_{\boldsymbol{\rho}},
\]
where
\[
(B_{\boldsymbol{\rho}})_{N,M}
=
(N+M)!h_N(\boldsymbol{\rho})h_M(\boldsymbol{\rho}).
\]
For $\boldsymbol{\rho}\neq \mathbf{0}$, observe that
\[
(N+M)!h_N(\boldsymbol{\rho})h_M(\boldsymbol{\rho})
=
c_N(\boldsymbol{\rho})c_M(\boldsymbol{\rho})
\binom{N+M}{N}\left(\frac{R_{\boldsymbol{\rho}}}{2}\right)^{N+M}.
\]
Hence
$B_{\boldsymbol{\rho}}
= D_{\boldsymbol{\rho}}H_{\frac{R_{\boldsymbol{\rho}}}{2}}D_{\boldsymbol{\rho}}.$

\end{proof}
The decomposition above isolates the weighted Hankel operator
$H_{R_{\boldsymbol{\rho}}/2}$ from the contribution of the prime
coefficients, which is encoded in the diagonal sequence
$\{c_N(\boldsymbol{\rho})\}_{N\ge0}$. We next give a concrete
representation of these diagonal coefficients and analyze their
behavior.

\begin{proposition}\label{prop:conv}
Let \(\boldsymbol\rho\in B_d\setminus\{\boldsymbol0\}\), and set
\[
\nu_{\boldsymbol\rho}
=
\sum_{j=1}^d
\frac{\rho_j}{R_{\boldsymbol\rho}}\,\delta_{e_j}.
\]
With \(\nu_{\boldsymbol\rho}^{*0}=\delta_0\), we have
\[
c_N(\boldsymbol\rho)
=
\|\nu_{\boldsymbol\rho}^{*N}\|_{\ell^2(\mathbb Z^d)},
\qquad N\geq0.
\]
If at least two components of \(\boldsymbol\rho\) are positive, then
\[
c_N(\boldsymbol\rho)\longrightarrow0 \qquad \text{ as } N \longrightarrow \infty .
\]
\end{proposition}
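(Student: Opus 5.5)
The identity is a direct computation with the multinomial theorem, and the decay follows from a local limit theorem.

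\textbf{Step 1 (the identity).} Put $\theta_j=\rho_j/R_{\boldsymbol\rho}$. The $N$-fold convolution of $\nu_{\boldsymbol\rho}=\sum_j\theta_j\delta_{e_j}$ is the multinomial distribution, since the generating function is $(\sum_j\theta_jz_j)^N$. Hence
\[
\nu_{\boldsymbol\rho}^{*N}(\alpha)=\frac{N!}{\alpha!}\boldsymbol\theta^{\alpha}\quad(|\alpha|=N),
\]
and $\nu_{\boldsymbol\rho}^{*N}$ vanishes off the layer $|\alpha|=N$. For $|\alpha|=N$ we have
\[
\frac{(\boldsymbol\rho/2)^\alpha}{\alpha!}=\frac{(R_{\boldsymbol\rho}/2)^N}{N!}\cdot\frac{N!}{\alpha!}\boldsymbol\theta^\alpha .
\]
Squaring, summing over $|\alpha|=N$ and taking square roots gives
\[
h_N(\boldsymbol\rho)=\frac{(R_{\boldsymbol\rho}/2)^N}{N!}\,\|\nu_{\boldsymbol\rho}^{*N}\|_{\ell^2(\mathbb Z^d)},
\]
which is $c_N(\boldsymbol\rho)=\|\nu_{\boldsymbol\rho}^{*N}\|_{\ell^2}$. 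The case $N=0$ is consistent, since $\nu^{*0}=\delta_0$ has norm $1=c_0$.

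\textbf{Step 2 (decay).} Suppose $\theta_1,\theta_2>0$ after relabelling. Since $\nu^{*N}$ is a probability measure,
\[
\|\nu^{*N}\|_2^2\le\max_\alpha\nu^{*N}(\alpha),
\]
so it suffices to show that the maximal atom tends to $0$. Group coordinates $1$ and $2$ against the rest. Conditioning on the number $k=\alpha_1+\alpha_2$, the pair $(\alpha_1,\alpha_2)$ is binomial$(k,p)$ with $p=\theta_1/(\theta_1+\theta_2)\in(0,1)$, independently of the other coordinates. This gives
\[
\nu^{*N}(\alpha)\le \binom{k}{\alpha_1}p^{\alpha_1}(1-p)^{k-\alpha_1}\cdot \mathbb P(K=k),
\]
where $K\sim\mathrm{Bin}(N,\theta_1+\theta_2)$.

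The maximal binomial atom satisfies $\max_i\binom{k}{i}p^i(1-p)^{k-i}\le C_p/\sqrt{k+1}$, by Stirling or the standard local bound. Split according to whether $k\ge \tfrac12(\theta_1+\theta_2)N$ or not. In the first case the bound is $O(N^{-1/2})$. In the second case $\mathbb P(K=k)\le\mathbb P(K<\tfrac12\mathbb E K)$, which tends to $0$ by Chebyshev. Hence $\max_\alpha\nu^{*N}(\alpha)\to0$, and so $c_N\to0$.

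\textbf{Main obstacle.} The only nontrivial point is the uniform bound on the maximal binomial probability. I would obtain it from Stirling, or from the fact that binomial probabilities are unimodal with $\sum_i\binom ki p^i(1-p)^{k-i}=1$ and have variance $kp(1-p)$, which by a standard argument forces a max atom of order $k^{-1/2}$.

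An alternative route is Fourier analysis. Parseval gives
\[
c_N^2=\int_{\mathbb T^d}\Big|\sum_j\theta_je^{it_j}\Big|^{2N}\,dt .
\]
The integrand is at most $1$, and equals $1$ only on the null set $t_1=t_2$ (mod $2\pi$) when $\theta_1,\theta_2>0$. Dominated convergence then gives $c_N\to0$ immediately. I would actually present this Fourier argument, since it avoids the local estimates entirely.
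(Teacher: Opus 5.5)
Your proposal is correct, and the route you say you would actually present (the multinomial identity $\nu_{\boldsymbol\rho}^{*N}(\alpha)=\frac{N!}{\alpha!}\boldsymbol\theta^{\alpha}$, then Plancherel on $\mathbb T^d$, then dominated convergence using that $|\sum_j\theta_je^{it_j}|<1$ off a null set) is exactly the paper's proof. Your probabilistic Step 2 is also sound, and it gives something the paper's argument does not: combining $\|\nu^{*N}\|_2^2\le\max_\alpha\nu^{*N}(\alpha)$ with your binomial/Chebyshev split yields $\max_\alpha\nu^{*N}(\alpha)=O(N^{-1/2})$, hence the explicit rate $c_N(\boldsymbol\rho)=O(N^{-1/4})$, whereas dominated convergence gives only qualitative decay.
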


\begin{proof}
Set
$\theta_j = \frac{\rho_j}{R_{\boldsymbol{\rho}}},$  for $ 1\leq j\leq d.$
Then $\sum_{j=1}^d\theta_j=1$ and $c_N^2$ can be written as
\[
c_N^2
=
\sum_{|\alpha|=N}
\left(
\frac{N!}{\alpha!}
\theta_1^{\alpha_1}\cdots\theta_d^{\alpha_d}
\right)^2.
\]

Let $e_1,\ldots,e_d$ denote the standard basis of $\mathbb R^d$, and define the finitely supported function $\nu:\mathbb Z^d\to\mathbb R$ by
\[
\nu:=\sum_{j=1}^d\theta_j\delta_{e_j},
\]
where $\delta_{e_j}$ denotes the point mass at $e_j$. For functions
$f,g$ on $\mathbb Z^d$, their convolution is defined by
\[
(f*g)(\alpha)
=
\sum_{\beta\in\mathbb Z^d}
f(\beta)g(\alpha-\beta).
\]
Accordingly,
\[
\nu^{*N}
=
\underbrace{\nu*\cdots*\nu}_{N\text{ times}}
\]
and, for $\alpha\in\mathbb Z^d$,
\[
\nu^{*N}(\alpha)
=
\sum_{\substack{\beta_1,\ldots,\beta_N\in\mathbb Z^d\\
\beta_1+\cdots+\beta_N=\alpha}}
\nu(\beta_1)\cdots\nu(\beta_N).
\]

Since $\nu$ is supported on $\{e_1,\ldots,e_d\}$, only those terms
for which each $\beta_k$ belongs to $\{e_1,\ldots,e_d\}$ contribute to
the above sum. Hence
\[
\nu^{*N}(\alpha)
=
\sum_{\substack{
j_1,\ldots,j_N\in\{1,\ldots,d\}\\
e_{j_1}+\cdots+e_{j_N}=\alpha
}}
\theta_{j_1}\cdots\theta_{j_N}.
\]
Now let $\alpha=(\alpha_1,\ldots,\alpha_d)\in\mathbb N_0^d$ with
$|\alpha|=N$. The identity $e_{j_1}+\cdots+e_{j_N}=\alpha$
means that $e_j$ occurs exactly $\alpha_j$ times among
$e_{j_1},\ldots,e_{j_N}$, for each $1\leq j\leq d$. Consequently,
every nonzero term in the above sum equals
\[
\theta_1^{\alpha_1}\cdots\theta_d^{\alpha_d}
=
\boldsymbol{\theta}^\alpha.
\]
Moreover, the number of ordered $N$-tuples
$(j_1,\ldots,j_N)$ with this property is
\[
\frac{N!}{\alpha_1!\cdots\alpha_d!}
=
\frac{N!}{\alpha!}.
\]
Therefore,
\[
\nu^{*N}(\alpha)
=
\frac{N!}{\alpha!}\boldsymbol{\theta}^\alpha,
\qquad |\alpha|=N.
\]
Also,
\[
\operatorname{supp}(\nu^{*N})
\subseteq
\{\alpha\in\mathbb N_0^d:|\alpha|=N\}.
\]
It follows that
\begin{equation}\label{eq:convolution}
c_N^2(\boldsymbol{\rho})
=
\sum_{|\alpha|=N}
\left|
\frac{N!}{\alpha!}\boldsymbol{\theta}^\alpha
\right|^2
=
\sum_{\alpha\in\mathbb Z^d}
|\nu^{*N}(\alpha)|^2
=
\|\nu^{*N}\|_{\ell^2(\mathbb Z^d)}^2.
\end{equation}

For $\mathbf t=(t_1,\ldots,t_d)\in\mathbb T^d$, let
$\widehat{\nu}$ denote the Fourier transform of $\nu$, defined by
\[
\widehat{\nu}(\mathbf t)
:=
\sum_{\alpha\in\mathbb Z^d}
\nu(\alpha)e^{i\alpha\cdot\mathbf t}, \qquad
 \text{ where } \alpha\cdot\mathbf t
= \alpha_1t_1+\cdots+\alpha_dt_d.\]
Since $\nu$ is supported on $\{e_1,\ldots,e_d\}$, we have
\[
\widehat{\nu}(\mathbf t)
=
\sum_{j=1}^d\theta_j e^{it_j}.
\]
The Fourier transform converts convolution into multiplication, and
hence
\[
\widehat{\nu^{*N}}(\mathbf t)
=
\widehat{\nu}(\mathbf t)^N
=
\left(
\sum_{j=1}^d\theta_j e^{it_j}
\right)^N.
\]
Therefore, by Plancherel's identity on $\mathbb Z^d$,
\[
c_N^2(\boldsymbol{\rho})
=
\|\nu^{*N}\|_{\ell^2(\mathbb Z^d)}^2
=
\int_{\mathbb T^d}
|\widehat{\nu^{*N}}(\mathbf t)|^2
\,d\mathbf m(\mathbf t)
=
\int_{\mathbb T^d}
\left|
\sum_{j=1}^d\theta_j e^{it_j}
\right|^{2N}
\,d\mathbf m(\mathbf t),
\]
where $\mathbf m$ denotes the normalized Haar measure on
$\mathbb T^d$.

Since at least two $\theta_j$ are strictly positive, the modulus inside the integral is strictly smaller than $1$ outside the measure-zero set on which all active phases coincide.
 By the dominated convergence theorem
\[
\lim_{N\to\infty}c_N^2(\boldsymbol{\rho})=0.
\]

\end{proof}
We now record the consequences for the
diagonal operator $D_{\boldsymbol{\rho}}$ and, hence, for the limiting
operator $\mathcal H_{\boldsymbol{\rho}}$.

If at least two components of $\boldsymbol{\rho}$ are positive, then
 Proposition \ref{prop:conv} gives
\[
c_N(\boldsymbol{\rho})\longrightarrow0.
\]
Hence
\[
D_{\boldsymbol{\rho}}
=
\operatorname{diag}
\bigl(c_0(\boldsymbol{\rho}),
c_1(\boldsymbol{\rho}),\ldots\bigr)
\]
is compact. Therefore, by Theorem~\ref{thm:boundary-Hankel-reduction},
$\mathcal H_{\boldsymbol{\rho}}$
is compact as well.

\section{Fixed-\texorpdfstring{$\sigma$}{sigma} asymptotic expansion}
\label{sec:4}

We now turn to the fixed-$\sigma$ case for finite-prime symbols. Let
$$\varphi(s)=\sigma+\sum_{j=1}^{d}r_jp_j^{-s},$$ with
 \begin{equation}\label{eq:fixed-sigma}
    \sigma>\frac12,\quad r_j\geq0, \quad R:=\sum_{j=1}^{d}r_j<\sigma-1/2.
\end{equation}

The purpose of this section is to derive the asymptotic expansion of $\|C_{\varphi}\|^2$ as $R\to 0$. Our approach is based on a rank-one decomposition of the positive operator
$S_{\sigma,\boldsymbol{r}}=T_{\sigma,\boldsymbol{r}}^{*}T_{\sigma,\boldsymbol{r}}$
followed by an estimate of the higher-order terms and a reduction to a rank-two approximation. For $\alpha\in\mathbb{N}_{0}^{d},$ define $(u_{\alpha})_n$ by
\[
(u_{\alpha})_n
=
n^{-\sigma}
\frac{(\log n)^{|\alpha|}}{\alpha!},
\qquad
n\in\mathbb{N}.
\]

\begin{lemma}
\label{lem:finite-prime-input-rank-one-expansion}
Let $\sigma, \boldsymbol{r}$ be as in \eqref{eq:fixed-sigma}. Then $((u_{\alpha})_n)\in\ell^{2}(\mathbb{N})$, and
\[
S_{\sigma,\boldsymbol{r}}
=
\sum_{\alpha\in\mathbb{N}_{0}^{d}}
\boldsymbol{r}^{2\alpha}(u_{\alpha}\otimes u_{\alpha}),
\]
where the series converges absolutely in operator norm, and  $(u_{\alpha} \otimes u_{\alpha} )a = \langle a, u_{\alpha} \rangle u_{\alpha}$.
\end{lemma}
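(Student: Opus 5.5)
The plan is to compute $S_{\sigma,\boldsymbol r}=T_{\sigma,\boldsymbol r}^*T_{\sigma,\boldsymbol r}$ directly from the coefficient formula of Proposition~\ref{prop:fp-coefficient-realization}, and then bound the resulting rank-one pieces by a geometric series. First, membership $u_\alpha\in\ell^2(\mathbb N)$ is immediate:
\[
\|u_\alpha\|^2=\frac{1}{(\alpha!)^2}\sum_{n\ge1}n^{-2\sigma}(\log n)^{2|\alpha|}=\frac{|\zeta^{(2|\alpha|)}(2\sigma)|}{(\alpha!)^2}<\infty,
\]
since $2\sigma>1$. Next, the coefficient formula reads $(T_{\sigma,\boldsymbol r}a)_\alpha=(-1)^{|\alpha|}\boldsymbol r^\alpha\langle a,u_\alpha\rangle$, using that $u_\alpha$ is real. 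Hence, for $a,b\in\ell^2(\mathbb N)$,
\[
\langle S_{\sigma,\boldsymbol r}a,b\rangle=\langle T a,Tb\rangle=\sum_{\alpha}\boldsymbol r^{2\alpha}\langle a,u_\alpha\rangle\overline{\langle b,u_\alpha\rangle}=\sum_\alpha \boldsymbol r^{2\alpha}\langle (u_\alpha\otimes u_\alpha)a,b\rangle .
\]
This identity is legitimate because $Ta\in\ell^2(\mathbb N_0^d)$ by boundedness, so the inner product of $Ta$ and $Tb$ is an absolutely convergent sum over $\alpha$. Therefore, once the operator series is shown to converge in norm, its sum coincides with $S_{\sigma,\boldsymbol r}$ weakly, and hence as operators.

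The main step, and the only real obstacle, is absolute convergence in operator norm, namely $\sum_\alpha \boldsymbol r^{2\alpha}\|u_\alpha\|^2<\infty$. For this I would use Cauchy's estimate on $\zeta$ around $2\sigma$, mimicking the proof of Theorem~\ref{main}. Alternatively, and more simply, I would bound the sum directly by choosing $\varepsilon$ with $2R<\varepsilon<2\sigma-1$; such an $\varepsilon$ exists because $R<\sigma-\tfrac12$. The elementary inequality $x^{k}\le k!\,e^{x}$ applied with $x=\varepsilon\log n$ gives
\[
(\log n)^{2k}\le \frac{(2k)!}{\varepsilon^{2k}}\,n^{\varepsilon}.
\]
Therefore
\[
\|u_\alpha\|^2\le \frac{(2|\alpha|)!}{\varepsilon^{2|\alpha|}(\alpha!)^2}\,\zeta(2\sigma-\varepsilon).
\]
Summing over $|\alpha|=N$, I would use $\frac{(2N)!}{(\alpha!)^2}\le \binom{2N}{N}\left(\frac{N!}{\alpha!}\right)^2$ together with the bound $\sum_{|\alpha|=N}\left(\frac{N!}{\alpha!}\boldsymbol r^\alpha\right)^2\le R^{2N}$, which is the same multinomial trick as in Lemma~\ref{lem:boundary-layer-comparison}. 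Combined with $\binom{2N}{N}\le 4^N$, this yields
\[
\sum_{|\alpha|=N}\boldsymbol r^{2\alpha}\|u_\alpha\|^2\le \zeta(2\sigma-\varepsilon)\,(4R^2/\varepsilon^2)^N .
\]
The right-hand side is summable over $N$ because $2R<\varepsilon$. This is exactly the geometric majorant $E_N$ announced in the introduction. Since $\|u_\alpha\otimes u_\alpha\|=\|u_\alpha\|^2$, the operator series converges absolutely in norm, which completes the argument.
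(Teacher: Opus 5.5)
Your proposal is correct and follows essentially the same route as the paper: the coefficient formula $(T_{\sigma,\boldsymbol r}a)_\alpha=(-1)^{|\alpha|}\boldsymbol r^{\alpha}\langle a,u_\alpha\rangle$, the bound $(\log n)^{2m}\le (2m)!\,\varepsilon^{-2m}n^{\varepsilon}$ with $2R<\varepsilon<2\sigma-1$, the multinomial estimate together with $\binom{2m}{m}\le 4^m$, and $\|u_\alpha\otimes u_\alpha\|=\|u_\alpha\|^2$. Your weak-form identification of $S_{\sigma,\boldsymbol r}$ with the series is more careful than the paper's ``one can easily verify''. One small inaccuracy is your closing remark: the per-degree bound $\zeta(2\sigma-\varepsilon)(4R^2/\varepsilon^2)^N$ is not itself $E_N$, which is the geometric tail $\sum_{m\ge N}$ of these bounds.
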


\begin{proof}
For $a \in \ell^2(\mathbb{N})$, the definition of $T_{\sigma, \boldsymbol{r}}$ gives
\[
(T_{\sigma,\boldsymbol{r}}a)_{\alpha}
=
\boldsymbol{r}^{\alpha}
\sum_{n=1}^{\infty}
a_n n^{-\sigma}
\frac{(-\log n)^{|\alpha|}}{\alpha!}
= (-1)^{|\alpha|}\boldsymbol{r}^{\alpha}\langle a,u_{\alpha}\rangle.
\]
Then one can easily compute its adjoint and verify,
\[
S_{\sigma,\boldsymbol{r}}a
=
\sum_{\alpha\in\mathbb{N}_{0}^{d}}
\boldsymbol{r}^{2\alpha}\langle a,u_{\alpha}\rangle u_{\alpha}=\sum_{\alpha\in\mathbb{N}_{0}^{d}}
\boldsymbol{r}^{2\alpha}(u_{\alpha}\otimes u_{\alpha}).
\]
It remains to prove that this series converges absolutely in the operator norm.
Fix $\alpha\in \mathbb{N}_0^d$, put $m=|\alpha|.$ Then
\[
\|u_{\alpha}\|^{2}
=
\frac{1}{(\alpha!)^{2}}
\sum_{n=1}^{\infty}n^{-2\sigma}(\log n)^{2m}.
\]
Since $R<\sigma-\frac12,$ we may choose $\varepsilon$ such that
$2R<\varepsilon<2\sigma-1.$
From the exponential series,
\[
e^{\varepsilon\log n}
\ge
\frac{(\varepsilon\log n)^{2m}}{(2m)!},
\]
and therefore
\begin{equation}\label{eq:exp-inequality}
(\log n)^{2m}
\le
\frac{(2m)!}{\varepsilon^{2m}}\,n^\varepsilon.
\end{equation}
It follows that
\[
\|u_{\alpha}\|^{2}
\leq
\frac{(2m)!}{(\alpha!)^{2}\varepsilon^{2m}}
\zeta(2\sigma-\varepsilon).
\]
Thus
\[
\sum_{|\alpha|=m}
\boldsymbol{r}^{2\alpha}\|u_{\alpha}\|^{2}
\leq
\zeta(2\sigma-\varepsilon)
\frac{(2m)!}{\varepsilon^{2m}}
\sum_{|\alpha|=m}
\frac{\boldsymbol{r}^{2\alpha}}{(\alpha!)^{2}}.
\]
By the multinomial theorem,
\[
R^m
=
(r_1+\cdots+r_d)^m
=
\sum_{|\alpha|=m}
\frac{m!}{\alpha!}\boldsymbol{r}^{\alpha}.
\]
Therefore
\[
\frac{R^m}{m!}
=
\sum_{|\alpha|=m}
\frac{\boldsymbol{r}^{\alpha}}{\alpha!}.
\]
All terms in this sum are nonnegative. Hence
\[
\sum_{|\alpha|=m}
\left(\frac{\boldsymbol{r}^{\alpha}}{\alpha!}\right)^2
\leq
\left(
\sum_{|\alpha|=m}
\frac{\boldsymbol{r}^{\alpha}}{\alpha!}
\right)^2
=
\left(\frac{R^m}{m!}\right)^2.
\]
Thus
\[
\sum_{|\alpha|=m}
\boldsymbol{r}^{2\alpha}\|u_{\alpha}\|^{2}
\leq
\zeta(2\sigma-\varepsilon)
\binom{2m}{m}
\left(\frac{R^2}{\varepsilon^2}\right)^m.
\]
Using $\binom{2m}{m}\leq 4^m,$ we get
\begin{equation} \label{eq: summation bound}
\sum_{|\alpha|=m}
\boldsymbol{r}^{2\alpha}\|u_{\alpha}\|^{2}
\leq
\zeta(2\sigma-\varepsilon)
\left(\frac{4R^2}{\varepsilon^2}\right)^m<\infty.
\end{equation}
Observe that for any $a\in \ell^2$,
\[
\|(u_{\alpha}\otimes u_{\alpha})(a)\|=\|<a,u_{\alpha}>u_{\alpha}\|\leq \|a\|\|u_{\alpha}\|^2
\]
and for $a=\frac{u_{\alpha}}{\|u_{\alpha}\|}$,
\[
\|(u_{\alpha}\otimes u_{\alpha})(a)\|=\|a\|\|u_{\alpha}\|^2.
\]
Hence $\|u_{\alpha} \otimes u_{\alpha}\| = \|u_{\alpha}\|^2$. Using this, we obtain

\[
\sum_{\alpha\in\mathbb{N}_{0}^{d}}
\boldsymbol{r}^{2\alpha}\|u_{\alpha}\otimes u_{\alpha}\|
=
\sum_{\alpha\in\mathbb{N}_{0}^{d}}
\boldsymbol{r}^{2\alpha}\|u_{\alpha}\|^{2}
<\infty.
\]
\end{proof}

\begin{lemma}
\label{lem:finite-prime-tail-beyond-degree-one}
As $R\to 0,$  we have
\[
\left\|
\sum_{|\alpha|\geq 2}
\boldsymbol{r}^{2\alpha}(u_{\alpha}\otimes u_{\alpha})
\right\|
= O(R^4).
\]
\end{lemma}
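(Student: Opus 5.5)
The estimate will follow directly from the degree-wise bound \eqref{eq: summation bound} established in the proof of Lemma~\ref{lem:finite-prime-input-rank-one-expansion}. The $\varepsilon$ there is only constrained by $2R<\varepsilon<2\sigma-1$. Since we let $R\to0$ with $\sigma$ fixed, we can fix $\varepsilon$ once and for all, independent of $R$, for instance $\varepsilon=\sigma-\tfrac12$. We then restrict to $R$ small enough that $2R<\varepsilon/2$; this gives $4R^2/\varepsilon^2\le \tfrac14$. With this choice, $\zeta(2\sigma-\varepsilon)=\zeta(\sigma+\tfrac12)$ is a finite constant depending only on $\sigma$.

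Next we apply the triangle inequality together with $\|u_\alpha\otimes u_\alpha\|=\|u_\alpha\|^2$, both of which are justified by the absolute convergence proved in Lemma~\ref{lem:finite-prime-input-rank-one-expansion}. Grouping the terms by total degree $m=|\alpha|$ gives
\[
\Bigl\|\sum_{|\alpha|\ge2}\boldsymbol r^{2\alpha}(u_\alpha\otimes u_\alpha)\Bigr\|
\le\sum_{m\ge2}\sum_{|\alpha|=m}\boldsymbol r^{2\alpha}\|u_\alpha\|^2
\le\zeta(2\sigma-\varepsilon)\sum_{m\ge2}\Bigl(\frac{4R^2}{\varepsilon^2}\Bigr)^m .
\]
The right-hand side is a geometric series whose first term is $(4R^2/\varepsilon^2)^2$. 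It therefore equals
\[
\zeta(2\sigma-\varepsilon)\,\frac{(4R^2/\varepsilon^2)^2}{1-4R^2/\varepsilon^2}
\le \tfrac43\,\zeta(2\sigma-\varepsilon)\,\frac{16}{\varepsilon^4}\,R^4 ,
\]
and this is $O(R^4)$ with a constant depending only on $\sigma$.

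There is no real obstacle. The one point requiring care is that the implied constant must not depend on $R$: the auxiliary parameter $\varepsilon$ has to be chosen independently of $R$ instead of being tied to it, and the ratio $4R^2/\varepsilon^2$ has to stay bounded away from $1$. Both are secured by the choice of $\varepsilon$ and the smallness restriction on $R$ made at the outset.
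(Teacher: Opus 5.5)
Your proposal is correct and is essentially the paper's proof. It uses the same fixed $\varepsilon=\sigma-\tfrac12$, the same restriction $R\le\varepsilon/4$ giving $4R^2/\varepsilon^2\le\tfrac14$, the same appeal to \eqref{eq: summation bound} with the geometric tail from $m=2$, and the same constant $\tfrac{64}{3}\,\zeta(2\sigma-\varepsilon)\,\varepsilon^{-4}$.
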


\begin{proof}
Fix $\sigma
>1/2$ and set $\varepsilon=\frac{2\sigma-1}{2}$  and $R_0=\frac{\varepsilon}{4}.$ Then, for  $0\leq R\leq R_0,$   $q:=\frac{4R^2}{\varepsilon^2}\leq \frac{1}{4}.$

By the estimate established in Lemma  \ref{lem:finite-prime-input-rank-one-expansion}, for every $m\ge2$,
\[
\sum_{|\alpha|=m}
\boldsymbol{r}^{2\alpha}\|u_{\alpha}\otimes u_{\alpha}\|
\leq
\zeta(2\sigma-\varepsilon)
\left(\frac{4R^2}{\varepsilon^2}\right)^m.
\]
Therefore
\[
\left\|
\sum_{|\alpha|\geq 2}
\boldsymbol{r}^{2\alpha}(u_{\alpha}\otimes u_{\alpha})
\right\|
\leq
\zeta(2\sigma-\varepsilon)
\sum_{m=2}^{\infty}
\left(\frac{4R^2}{\varepsilon^2}\right)^m.
\]
Since $q\leq \frac{1}{4},$
we have $ \sum_{m=2}^{\infty}q^m = \frac{q^2}{1-q} \leq \frac{4}{3}q^2.$
Thus
\[
\left\|
\sum_{|\alpha|\geq 2}
\boldsymbol{r}^{2\alpha}(u_{\alpha}\otimes u_{\alpha})
\right\|
\leq
\frac{4}{3}\zeta(2\sigma-\varepsilon)
\left(\frac{4R^2}{\varepsilon^2}\right)^2=CR^4,
\]
where
$C = \frac{64}{3}\frac{\zeta(2\sigma-\varepsilon)}{\varepsilon^4}$
is independent of $R$. Hence the desired $O(R^4)$ estimate follows.

\end{proof}

\begin{lemma}
\label{lem:finite-prime-rank-two-calculation}
Let $\sigma,\boldsymbol{r}$ be as in \eqref{eq:fixed-sigma}.
For $u_0=(n^{-\sigma})_{n\geq 1},u_1=((\log n)n^{-\sigma})_{n\geq 1}$  and $Q=\sum_{j=1}^{d}r_j^2,$ define
\[
S_{\sigma,\boldsymbol{r}}^{(2)}
=
(u_0\otimes u_0)+Q(u_1\otimes u_1).
\]
Then
\[
\|S_{\sigma,\boldsymbol{r}}^{(2)}\|
=
\zeta(2\sigma)
+
\frac{\zeta'(2\sigma)^2}{\zeta(2\sigma)}Q
+
O(Q^2) \qquad \text{ as } Q\to 0.
\]
\end{lemma}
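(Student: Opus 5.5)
The operator $S^{(2)}_{\sigma,\boldsymbol r}$ is positive and of rank at most two, with range in $\operatorname{span}\{u_0,u_1\}$. I will compute its norm exactly as the largest eigenvalue of a $2\times 2$ matrix and then expand in $Q$.

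Both $u_0$ and $u_1$ lie in $\ell^2(\mathbb N)$ by Lemma~\ref{lem:finite-prime-input-rank-one-expansion}. The relevant inner products are
\[
a:=\|u_0\|^2=\zeta(2\sigma),\qquad b:=\langle u_1,u_0\rangle=\sum_{n}(\log n)n^{-2\sigma}=-\zeta'(2\sigma),\qquad c:=\|u_1\|^2=\zeta''(2\sigma),
\]
using the formula $\zeta^{(m)}(s)=\sum_n(-\log n)^m n^{-s}$ recalled in the proof of Proposition~\ref{prop:fp-output-kernel}.

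Write $S^{(2)}=WW^*$, where $W:\mathbb C^2\to\ell^2(\mathbb N)$ is given by $W(x,y)=xu_0+\sqrt Q\,yu_1$. Then $\|S^{(2)}\|=\|W^*W\|$, and $W^*W$ is the Gram matrix
\[
G=\begin{pmatrix} a & \sqrt Q\, b\\ \sqrt Q\, b & Qc\end{pmatrix}.
\]
Since $G$ is positive semidefinite, its norm is the larger eigenvalue
\[
\lambda_+=\tfrac12\Bigl(a+Qc+\sqrt{(a-Qc)^2+4Qb^2}\Bigr).
\]

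Next I expand $\lambda_+$ in $Q$. For small $Q$ we have $a-Qc>0$, since $a\ge1$. Write
\[
\sqrt{(a-Qc)^2+4Qb^2}=(a-Qc)\sqrt{1+x},\qquad x=\frac{4Qb^2}{(a-Qc)^2}=O(Q).
\]
Using $\sqrt{1+x}=1+x/2+O(x^2)$ gives
\[
\sqrt{(a-Qc)^2+4Qb^2}=a-Qc+\frac{2Qb^2}{a-Qc}+O(Q^2).
\]
Moreover $\frac{2Qb^2}{a-Qc}=\frac{2Qb^2}{a}+O(Q^2)$. Substituting into the formula for $\lambda_+$, the $Qc$ terms cancel, and
\[
\lambda_+=a+\frac{b^2}{a}Q+O(Q^2)=\zeta(2\sigma)+\frac{\zeta'(2\sigma)^2}{\zeta(2\sigma)}Q+O(Q^2).
\]

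An alternative route avoids the explicit eigenvalue formula. Since $\lambda_+=\|G\|$ is a simple eigenvalue near $a$ for small $Q$, one can apply second-order perturbation theory to $G=\operatorname{diag}(a,0)+\sqrt Q\,(\cdot)+Q(\cdot)$. The estimate $\|G\|\ge a+\frac{b^2}{a}Q-O(Q^2)$ also follows from testing $G$ on the vector $(1,\sqrt Q\,b/a)$.

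The only delicate point is making sure the $O(Q^2)$ constant is uniform, which here just means depending only on $\sigma$. This holds because $a$, $b$, $c$ are fixed finite numbers determined by $\sigma$, and the square-root expansion is uniform once $Q\le Q_0(\sigma)$ is small enough that $|x|\le\frac12$. I expect no serious obstacle beyond bookkeeping signs, noting that $b^2=\zeta'(2\sigma)^2$ regardless of the sign of $b$.
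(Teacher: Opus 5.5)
Your proposal is correct and follows the paper's proof: the paper also factors $S^{(2)}_{\sigma,\boldsymbol r}=BB^*$ through $\mathbb C^2$, takes the larger eigenvalue of the resulting $2\times2$ Gram matrix, and Taylor-expands the square root. The only difference is bookkeeping: the paper factors out $A=\zeta(2\sigma)$ inside the square root, whereas you factor out $a-Qc$. Both give $\zeta(2\sigma)+\frac{\zeta'(2\sigma)^2}{\zeta(2\sigma)}Q+O(Q^2)$ with a constant depending only on $\sigma$.
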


\begin{proof}
Define $B:\mathbb{C}^{2}\to\ell^{2}(\mathbb{N})$ by
\[
B(\xi_0,\xi_1)=\xi_0u_0+\xi_1Q^{1/2}u_1.
\]
Since $u_0,u_1 \in \ell^2(\mathbb{N})$, the operator $B$ is bounded. A direct computation shows that
 $S_{\sigma,\boldsymbol{r}}^{(2)}=BB^{*}.$

The matrix  $B^*B$ with respect to  the standard basis of $\mathbb{C}^2$ is
\[
B^{*}B
=
\begin{pmatrix}
\langle u_0,u_0\rangle & Q^{1/2}\langle u_1,u_0\rangle\\
Q^{1/2}\langle u_0,u_1\rangle & Q\langle u_1,u_1\rangle
\end{pmatrix}.
\]
Next we find out the largest eigenvalue of $B^{*}B$, that gives the norm of $S_{\sigma,\boldsymbol{r}}^{(2)}$.

Since $$\langle u_0,u_0\rangle=\zeta(2\sigma),
\langle u_1,u_0\rangle
=
\sum_{n=1}^{\infty}n^{-2\sigma}\log n
=
-\zeta'(2\sigma),$$
and $$
\langle u_1,u_1\rangle
=
\sum_{n=1}^{\infty}n^{-2\sigma}(\log n)^2
=
\zeta''(2\sigma).
$$
Set
\[
A=\zeta(2\sigma),
\qquad
B_1=-\zeta'(2\sigma),
\qquad
C=\zeta''(2\sigma).
\]
Then the relevant \(2\times 2\) matrix is
\[
\begin{pmatrix}
A & Q^{1/2}B_1\\
Q^{1/2}B_1 & QC
\end{pmatrix}.
\]
Its larger eigenvalue is
\[
\frac{A+QC+\sqrt{(A-QC)^2+4QB_1^2}}{2}.
\]

Since
\[
(A-QC)^2+4QB_1^2
=
A^2+Q(-2AC+4B_1^2)+C^2Q^2,
\]
we obtain
\[
\sqrt{(A-QC)^2+4QB_1^2}
=
A\sqrt{1+
Q\frac{-2AC+4B_1^2}{A^2}
+\frac{C^2}{A^2}Q^2}.
\]
Using the Taylor expansion
\[
\sqrt{1+x}=1+\frac{x}{2}+O(x^2), \qquad x\to0,
\]
and observing that
\[
Q\frac{-2AC+4B_1^2}{A^2}+\frac{C^2}{A^2}Q^2=O(Q),
\]
it follows that
\[
\sqrt{(A-QC)^2+4QB_1^2}
=
A+
\left(
\frac{2B_1^2}{A}-C
\right)Q
+O(Q^2).
\]
Therefore the larger eigenvalue is
\[
A+\frac{B_1^2}{A}Q+O(Q^2).
\]
Substituting back the value of A, B, C gives
\[
\|S_{\sigma,\boldsymbol{r}}^{(2)}\|
=
\zeta(2\sigma)
+
\frac{\zeta'(2\sigma)^2}{\zeta(2\sigma)}Q
+
O(Q^2).
\]
\end{proof}

\begin{theorem}
\label{T:finite-prime-fixed-sigma-expansion}
Let $\varphi=\sigma+\sum_{j=1}^{d}r_jp_j^{-s}$ be the finite-prime symbol  satisfying  \eqref{eq:fixed-sigma}. Then as $R\to 0,$ the squared norm of the associated composition operator admits the expansion
\[
\left\|C_{\sigma+\sum_{j=1}^{d}r_jp_j^{-s}}\right\|^{2}
=
\zeta(2\sigma)
+
\frac{\zeta'(2\sigma)^2}{\zeta(2\sigma)}
\sum_{j=1}^{d}r_j^2
+
O(R^4),
\]
with $\sigma>1/2$ fixed.
\end{theorem}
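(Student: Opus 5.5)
The plan is to combine the three preceding lemmas through the identity $\|C_\varphi\|^2=\|T_{\sigma,\boldsymbol r}\|^2=\|T_{\sigma,\boldsymbol r}^*T_{\sigma,\boldsymbol r}\|=\|S_{\sigma,\boldsymbol r}\|$, which follows from Proposition~\ref{prop:fp-coefficient-realization}. We then split the rank-one expansion of Lemma~\ref{lem:finite-prime-input-rank-one-expansion} by total degree:
\[
S_{\sigma,\boldsymbol r}=\sum_{|\alpha|\le 1}\boldsymbol r^{2\alpha}(u_\alpha\otimes u_\alpha)+\sum_{|\alpha|\ge2}\boldsymbol r^{2\alpha}(u_\alpha\otimes u_\alpha).
\]
The degree-zero term is $u_0\otimes u_0$ with $u_0=(n^{-\sigma})$. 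For $|\alpha|=1$, say $\alpha=e_j$, we have $\alpha!=1$, so $u_{e_j}=u_1=((\log n)n^{-\sigma})$ for every $j$. Also $\boldsymbol r^{2e_j}=r_j^2$. Hence the degree-one part is exactly $Q(u_1\otimes u_1)$ with $Q=\sum_j r_j^2$, and the truncated operator is precisely $S^{(2)}_{\sigma,\boldsymbol r}$ of Lemma~\ref{lem:finite-prime-rank-two-calculation}.

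Since both operators are bounded self-adjoint, the reverse triangle inequality gives
\[
\bigl|\|S_{\sigma,\boldsymbol r}\|-\|S^{(2)}_{\sigma,\boldsymbol r}\|\bigr|\le\Bigl\|\sum_{|\alpha|\ge2}\boldsymbol r^{2\alpha}(u_\alpha\otimes u_\alpha)\Bigr\|.
\]
By Lemma~\ref{lem:finite-prime-tail-beyond-degree-one}, the right-hand side is $O(R^4)$ as $R\to0$ with $\sigma$ fixed. Lemma~\ref{lem:finite-prime-rank-two-calculation} gives
\[
\|S^{(2)}_{\sigma,\boldsymbol r}\|=\zeta(2\sigma)+\frac{\zeta'(2\sigma)^2}{\zeta(2\sigma)}Q+O(Q^2).
\]

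It remains to convert the $O(Q^2)$ error into $O(R^4)$. Because $r_j\ge0$, we have $Q\le(\sum_j r_j)^2=R^2$, so $Q\to0$ as $R\to0$ and $O(Q^2)\subseteq O(R^4)$. Adding the two error terms yields the stated expansion.

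The only delicate point is the uniformity of the $O(Q^2)$ constant in Lemma~\ref{lem:finite-prime-rank-two-calculation}. That constant comes from the Taylor remainder of $\sqrt{1+x}$, with coefficients depending only on $\zeta(2\sigma)$, $\zeta'(2\sigma)$ and $\zeta''(2\sigma)$. These are fixed once $\sigma$ is fixed, so the bound holds for all $Q\le R_0^2$ with a constant independent of the direction of $\boldsymbol r$. I would verify this explicitly, restricting to $R\le R_0=\varepsilon/4$ as in Lemma~\ref{lem:finite-prime-tail-beyond-degree-one}, so that the combined constant depends only on $\sigma$.
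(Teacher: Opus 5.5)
Your proposal is correct and follows the paper's proof essentially step for step. You identify $\|C_\varphi\|^2=\|S_{\sigma,\boldsymbol r}\|$, split $S_{\sigma,\boldsymbol r}$ at total degree two, bound the tail by Lemma~\ref{lem:finite-prime-tail-beyond-degree-one}, apply Lemma~\ref{lem:finite-prime-rank-two-calculation}, and absorb $O(Q^2)$ into $O(R^4)$ via $Q\le R^2$; your explicit check that the degree-one part equals $Q(u_1\otimes u_1)$, because $u_{e_j}=u_1$ for every $j$, fills in a step the paper leaves implicit.
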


\begin{proof}
From Lemma \ref{lem:finite-prime-input-rank-one-expansion} and \ref{lem:finite-prime-rank-two-calculation}, we can write
\[
S_{\sigma,\boldsymbol{r}}
=
S_{\sigma,\boldsymbol{r}}^{(2)}
+
\sum_{|\alpha|\geq 2}
\boldsymbol{r}^{2\alpha}(u_{\alpha}\otimes u_{\alpha}),
\]
By Lemma \ref{lem:finite-prime-tail-beyond-degree-one},
\[
\left\|
S_{\sigma,\boldsymbol{r}}-S_{\sigma,\boldsymbol{r}}^{(2)}
\right\|
=
O(R^4).
\]
Hence
\[
\left|
\|S_{\sigma,\boldsymbol{r}}\|
-
\|S_{\sigma,\boldsymbol{r}}^{(2)}\|
\right|
=
O(R^4).
\]
Since \(Q\leq R^2\), we have \(Q^2\leq R^4\).  Lemma \ref{lem:finite-prime-rank-two-calculation} gives
\[
\|S_{\sigma,\boldsymbol{r}}\|
=
\zeta(2\sigma)
+
\frac{\zeta'(2\sigma)^2}{\zeta(2\sigma)}
\sum_{j=1}^{d}r_j^2
+
O(R^4).
\]
Finally,
\[
\|S_{\sigma,\boldsymbol{r}}\|
= \|T_{\sigma,\boldsymbol{r}}\|^2=
\left\|C_{\sigma+\sum_{j=1}^{d}r_jp_j^{-s}}\right\|^{2}.
\]
This proves the theorem.
\end{proof}

\section{Finite section error estimate}
\label{sec:5}
In this section, we develop fully finite-dimensional approximations with explicit total-degree and Dirichlet-sum truncation errors.

We introduce the positive coefficient operator associated with \(T_{\sigma, \boldsymbol{r}}\), obtained by removing the alternating signs
\[
(T_{\sigma,\boldsymbol{r}}^{+}a)_{\alpha}
=
\sum_{n=1}^{\infty}
a_n n^{-\sigma}
\frac{(\log n)^{|\alpha|}\boldsymbol{r}^{\alpha}}{\alpha!}.
\]
Define the diagonal unitary operator on $\ell^2(\mathbb{N}_0^d)$ by
\[
Ue_\alpha=(-1)^{|\alpha|}e_\alpha.
\]
Then
\[
T^+_{\sigma,\boldsymbol r}
=UT_{\sigma,\boldsymbol r},
\qquad
\|T^+_{\sigma,\boldsymbol r}\|
=
\|T_{\sigma,\boldsymbol r}\|.
\]
Our approach is based on truncating the positive matrix $A_{\sigma,\boldsymbol{r}}^{+}
=T_{\sigma,\boldsymbol{r}}^{+}(T_{\sigma,\boldsymbol{r}}^{+})^{*}$ to finite sections and estimating the resulting approximation error. This gives fully rigorous finite matrix approximations of the norm.
For $k,N\in\mathbb{N}_{0},$ define
\[
m_k(\sigma)
=
\sum_{n=1}^{\infty}n^{-2\sigma}(\log n)^k = (-1)^k\zeta^{(k)}(2\sigma) \quad \text{ and } \quad a_N(\boldsymbol{r})^2 =
\sum_{|\alpha|=N}
\frac{\boldsymbol{r}^{2\alpha}}{(\alpha!)^2}.
\]

\begin{proposition}
\label{prop:finite-prime-total-degree-reduction}
Fix $\varphi=\sigma+\sum_{j=1}^{d}r_jp_j^{-s}$  satisfying  \eqref{eq:fixed-sigma}.
Then
$A_{\sigma,\boldsymbol{r}}^{+}$
is unitarily equivalent to
$B_{\sigma,\boldsymbol{r}}\oplus \mathbf{0},$ where
\[
B_{\sigma,\boldsymbol{r}}
=
\left(
m_{N+M}(\sigma)a_N(\boldsymbol{r})a_M(\boldsymbol{r})
\right)_{N,M\geq 0}.
\]
Consequently, $\|C_{\varphi}\|^{2}
= \|B_{\sigma,\boldsymbol{r}}\|.$

\end{proposition}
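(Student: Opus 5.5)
The plan is to copy the total-degree reduction in the proof of Theorem~\ref{thm:boundary-Hankel-reduction}, with $(N+M)!$ replaced by the moments $m_{N+M}(\sigma)$. First I compute the matrix of $A^+_{\sigma,\boldsymbol r}$. Since $T^+_{\sigma,\boldsymbol r}=UT_{\sigma,\boldsymbol r}$, we have $A^+_{\sigma,\boldsymbol r}=UA_{\sigma,\boldsymbol r}U$. Proposition~\ref{prop:fp-output-kernel} then gives
\[
(A^+_{\sigma,\boldsymbol r})_{\alpha,\beta}=(-1)^{|\alpha|+|\beta|}\frac{\boldsymbol r^{\alpha+\beta}}{\alpha!\beta!}\zeta^{(|\alpha|+|\beta|)}(2\sigma)=m_{|\alpha|+|\beta|}(\sigma)\,w_\alpha w_\beta,
\qquad w_\alpha=\frac{\boldsymbol r^{\alpha}}{\alpha!}.
\]
By construction $a_N(\boldsymbol r)^2=\sum_{|\alpha|=N}w_\alpha^2$. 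Boundedness of $A^+_{\sigma,\boldsymbol r}$ follows from Proposition~\ref{prop:fp-coefficient-realization}.

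If $\boldsymbol r=\mathbf 0$, then $w_\alpha=0$ for $|\alpha|\ge1$, so $A^+$ is the rank-one matrix $\zeta(2\sigma)e_0\otimes e_0$. This agrees with $B_{\sigma,\mathbf 0}$, which has only the $(0,0)$ entry $m_0(\sigma)=\zeta(2\sigma)$. So assume $R>0$. Then $a_N(\boldsymbol r)>0$ for every $N$, and I define the unit vectors $\omega_N=a_N(\boldsymbol r)^{-1}\sum_{|\alpha|=N}w_\alpha e_\alpha$. They have disjoint degree supports, so they are orthonormal. Let $\mathcal M$ be their closed span and let $V:\ell^2(\mathbb N_0)\to\mathcal M$ be the unitary with $Ve_N=\omega_N$.

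Next I show that $\mathcal M$ reduces $A^+$ and that $A^+$ vanishes on $\mathcal M^\perp$. For finitely supported $x$ and $|\alpha|=N$,
\[
(A^+x)_\alpha=w_\alpha\sum_M m_{N+M}(\sigma)\sum_{|\beta|=M}w_\beta x_\beta .
\]
The inner sums are finite, so $A^+x\in\mathcal M$. By boundedness and density, $\operatorname{Ran}A^+\subset\mathcal M$. If $x\perp\mathcal M$, every degree sum $\sum_{|\beta|=M}w_\beta x_\beta$ vanishes. To conclude $A^+x=0$, I pair against $e_\alpha$: $(A^+x)_\alpha=\langle x,A^+e_\alpha\rangle$ by self-adjointness. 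Since $A^+e_\alpha=w_\alpha\sum_M m_{N+M}a_M\omega_M\in\mathcal M$, this entry is zero. This pairing argument avoids any delicate density step for the kernel. Finally, computing $A^+\omega_M$ entrywise exactly as in the proof of Theorem~\ref{thm:boundary-Hankel-reduction} gives
\[
A^+\omega_M=\sum_N m_{N+M}(\sigma)\,a_N(\boldsymbol r)a_M(\boldsymbol r)\,\omega_N,
\]
so $V^*A^+|_{\mathcal M}V=B_{\sigma,\boldsymbol r}$. Hence $A^+\cong B_{\sigma,\boldsymbol r}\oplus\mathbf 0$.

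The norm identity follows from the chain $\|C_\varphi\|^2=\|T_{\sigma,\boldsymbol r}\|^2=\|T^+_{\sigma,\boldsymbol r}\|^2=\|A^+_{\sigma,\boldsymbol r}\|=\|B_{\sigma,\boldsymbol r}\|$, using Proposition~\ref{prop:fp-coefficient-realization} for the first step. The only genuine obstacle is the convergence of the series $\sum_M m_{N+M}a_M(\cdot)$ that defines $A^+e_\alpha$ and $B$ as bounded operators. This is already settled because $A^+$ is bounded, so $A^+e_\alpha\in\ell^2$ and the series converges. If an independent check is wanted, the bound \eqref{eq:exp-inequality} gives $m_{k}(\sigma)\le k!\,\varepsilon^{-k}\zeta(2\sigma-\varepsilon)$, and Lemma~\ref{lem:boundary-layer-comparison}-type reasoning gives $a_M\le R^M/M!$. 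Together these yield geometric decay with ratio $2R/\varepsilon<1$.
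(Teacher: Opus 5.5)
Your proposal is correct and follows essentially the paper's route. You use the same degree-layer vectors $\omega_N$, the same entrywise computation $A^+_{\sigma,\boldsymbol r}\omega_M=\sum_N m_{N+M}(\sigma)a_N(\boldsymbol r)a_M(\boldsymbol r)\omega_N$, and the same norm chain; the only variation is that you get $A^+_{\sigma,\boldsymbol r}|_{\mathcal M^\perp}=0$ from self-adjointness ($\ker A^+_{\sigma,\boldsymbol r}=(\operatorname{Ran}A^+_{\sigma,\boldsymbol r})^\perp$), whereas the paper uses density of finitely supported vectors in $\mathcal M^\perp$, which is equally valid because each degree layer is finite-dimensional.
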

\begin{proof}
For $ \alpha\in\mathbb{N}_{0}^{d}$, set  $w_{\alpha} =\frac{\boldsymbol{r}^{\alpha}}{\alpha!}.$ Using the same computation as in Proposition \ref{prop:fp-output-kernel}, we get
\[
(A_{\sigma,\boldsymbol{r}}^{+})_{\alpha,\beta}
=
m_{|\alpha|+|\beta|}(\sigma)w_{\alpha}w_{\beta}.
\]
If $R=0$, then $B_{\sigma,\mathbf 0}$ has rank one, and the assertion is
immediate. Assume from now on that \(R>0\). Then, for every
$N\in\mathbb N_0$, we have $a_N(\boldsymbol r)>0$.
Define $\omega_N\in\ell^{2}(\mathbb{N}_{0}^{d})$
by
\[
(\omega_N)_{\alpha}
=
\begin{cases}
\displaystyle \frac{w_{\alpha}}{a_N(\boldsymbol{r})}, & |\alpha|=N,\\[6pt]
0, & |\alpha|\neq N.
\end{cases}
\]
Thus
\[
\|\omega_N\|^2
=
\frac{1}{a_N(\boldsymbol{r})^2}
\sum_{|\alpha|=N}w_{\alpha}^2
=
1,
\]
and the vectors \(\omega_N\) have disjoint supports for different \(N\). Hence $(\omega_N)_{N\geq 0}$ is an orthonormal sequence.

We first compute  on finitely supported vectors. For
$|\alpha|=N,$ we have
\[
(A_{\sigma,\boldsymbol{r}}^{+}\omega_M)_{\alpha}
=
\sum_{|\beta|=M}
m_{N+M}(\sigma)w_{\alpha}w_{\beta}
\frac{w_{\beta}}{a_M(\boldsymbol{r})}.
\]
Therefore
\[
(A_{\sigma,\boldsymbol{r}}^{+}\omega_M)_{\alpha}
=
m_{N+M}(\sigma)w_{\alpha}
\frac{1}{a_M(\boldsymbol{r})}
\sum_{|\beta|=M}w_{\beta}^{2}.
\]
Since $\sum_{|\beta|=M}w_{\beta}^{2} = a_M(\boldsymbol{r})^{2},$
we obtain
\[
(A_{\sigma,\boldsymbol{r}}^{+}\omega_M)_{\alpha}
=
m_{N+M}(\sigma)w_{\alpha}a_M(\boldsymbol{r}).
\]
On the layer $|\alpha|=N,$ we have
$w_{\alpha}=a_N(\boldsymbol{r})(\omega_N)_{\alpha}.$ Thus
\[
A_{\sigma,\boldsymbol{r}}^{+}\omega_M
=
\sum_{N=0}^{\infty}
m_{N+M}(\sigma)a_N(\boldsymbol{r})a_M(\boldsymbol{r})\omega_N.
\]
This identifies the restriction of \(A_{\sigma,\boldsymbol{r}}^{+}\) to
$\overline{\operatorname{span}}\{\omega_N:N\geq 0\}$ with the matrix $B_{\sigma,\boldsymbol{r}}.$

It remains to show that \(A_{\sigma,\boldsymbol{r}}^{+}\) vanishes on the orthogonal complement. Let \(x\) be finitely supported and orthogonal to every \(\omega_M\). Then
\[
0=\langle x,\omega_M\rangle
=
\frac{1}{a_M(\boldsymbol{r})}
\sum_{|\beta|=M}x_{\beta}w_{\beta}
\]
for every \(M\). Hence
\[
\sum_{|\beta|=M}x_{\beta}w_{\beta}=0
\]
for every \(M\). For \(|\alpha|=N\),
\[
(A_{\sigma,\boldsymbol{r}}^{+}x)_{\alpha}
=
\sum_{M=0}^{\infty}
m_{N+M}(\sigma)w_{\alpha}
\sum_{|\beta|=M}w_{\beta}x_{\beta}
=
0.
\]
Thus $A_{\sigma,\boldsymbol{r}}^{+}x=0$
for every finitely supported vector in the orthogonal complement.
Since finitely supported vectors in the orthogonal complement are dense
there and \(A_{\sigma,\boldsymbol r}^{+}\) is bounded, the operator
vanishes on the entire orthogonal complement.
Consequently, $A_{\sigma,\boldsymbol{r}}^{+}$
is unitarily equivalent to
$B_{\sigma,\boldsymbol{r}}\oplus 0$ and
\[
\|C_{\varphi}\|^{2}
=\|A_{\sigma,\boldsymbol{r}}^{+}\|=\|B_{\sigma,\boldsymbol{r}}\|. \]
\end{proof}
To construct finite-dimensional approximations of the matrix $B_{\sigma,\boldsymbol{r}}$, we first define the truncation of $m_k(\sigma),$ given by
\[
m_k^{(M)}(\sigma)
=
\sum_{n=1}^{M}
n^{-2\sigma}(\log n)^k.
\]
By Proposition~\ref{prop:finite-prime-total-degree-reduction}, $\|C_{\varphi}\|^2 = \|B_{\sigma,\boldsymbol{r}}\|$. We therefore introduce the  finite-section of $B_{\sigma,\boldsymbol{r}}$.
For $N\in\mathbb{N}$,
\[
B_{\sigma,\boldsymbol{r}}^{(N)}
=
\left(
m_{j+k}(\sigma)a_j(\boldsymbol{r})a_k(\boldsymbol{r})
\right)_{0\le j,k\le N-1},
\]
and let
\[
\lambda_N(\sigma,\boldsymbol{r})
=
\lambda_{\max}(B_{\sigma,\boldsymbol{r}}^{(N)}).
\]
For numerical computation it is convenient to truncate the $m_k(\sigma)$ as well. This leads to the following fully finite approximation.
For $N,M\in\mathbb N$, define
\[
B_{\sigma,\boldsymbol{r}}^{(N,M)}
=
\left(
m_{j+k}^{(M)}(\sigma)a_j(\boldsymbol{r})a_k(\boldsymbol{r})
\right)_{0\le j,k\le N-1},
\]
and let
\[
\lambda_{N,M}(\sigma,\boldsymbol{r})
=
\lambda_{\max}(B_{\sigma,\boldsymbol{r}}^{(N,M)}).
\]
Let $P_N:\ell^2(\mathbb N_0)\to\ell^2(\mathbb N_0)$ be the orthogonal projection onto
$\operatorname{span}\{e_0,\ldots,e_{N-1}\}.$ Then
\[
B_{\sigma,\boldsymbol{r}}^{(N)} = P_NB_{\sigma,\boldsymbol{r}}P_N.
\]

\begin{lemma}
\label{lem:finite-section-lower-bound}
Let $\varphi=\sigma+\sum_{j=1}^{d}r_jp_j^{-s}$   satisfying  \eqref{eq:fixed-sigma}. Then, for every $N,M\in\mathbb{N},$ we have
\[
0\leq
\lambda_{N,M}(\sigma,\boldsymbol{r})
\leq
\lambda_N(\sigma,\boldsymbol{r})
\leq
\|C_{\varphi}\|^{2}.
\]
\end{lemma}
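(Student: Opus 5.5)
The inequality chain splits into three comparisons, which I will establish from right to left.

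\emph{Right inequality.} By Proposition~\ref{prop:finite-prime-total-degree-reduction}, $\|C_\varphi\|^2=\|B_{\sigma,\boldsymbol r}\|$. We also have $B^{(N)}_{\sigma,\boldsymbol r}=P_NB_{\sigma,\boldsymbol r}P_N$, viewed as an operator on $\operatorname{span}\{e_0,\ldots,e_{N-1}\}$. This compression is positive semidefinite because $B_{\sigma,\boldsymbol r}$ is positive: it is unitarily equivalent to a direct summand of $A^+_{\sigma,\boldsymbol r}=T^+(T^+)^*\ge0$. Its largest eigenvalue therefore equals its norm, and
\[
\lambda_N(\sigma,\boldsymbol r)=\|P_NB_{\sigma,\boldsymbol r}P_N\|\le\|B_{\sigma,\boldsymbol r}\|=\|C_\varphi\|^2 .
\]

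\emph{Middle inequality.} The plan is to realize both finite matrices as Gram matrices. For $0\le j\le N-1$, set
\[
v_j=\bigl(a_j(\boldsymbol r)\,n^{-\sigma}(\log n)^j\bigr)_{n\ge1}\in\ell^2(\mathbb N).
\]
This vector lies in $\ell^2$ since $\sigma>1/2$. Then
\[
\langle v_k,v_j\rangle=a_ja_k\,m_{j+k}(\sigma),
\]
so $B^{(N)}_{\sigma,\boldsymbol r}$ is the Gram matrix of $v_0,\ldots,v_{N-1}$. Likewise $B^{(N,M)}_{\sigma,\boldsymbol r}$ is the Gram matrix of the truncated vectors $P^{(M)}v_j$, where $P^{(M)}$ is the coordinate projection onto $n\le M$. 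In particular both matrices are positive semidefinite.

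Their difference is the Gram matrix of the tails $(I-P^{(M)})v_j$: its entries are $a_ja_k\sum_{n>M}n^{-2\sigma}(\log n)^{j+k}$, which is again positive semidefinite. Hence
\[
0\le B^{(N,M)}_{\sigma,\boldsymbol r}\le B^{(N)}_{\sigma,\boldsymbol r}
\]
in the Loewner order. For positive semidefinite matrices $0\le X\le Y$, we have $\lambda_{\max}(X)=\sup_{\|\xi\|=1}\langle X\xi,\xi\rangle\le\lambda_{\max}(Y)$. This gives $\lambda_{N,M}\le\lambda_N$.

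\emph{Left inequality.} Positivity of $B^{(N,M)}_{\sigma,\boldsymbol r}$, established above, immediately gives $\lambda_{N,M}\ge0$.

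I expect no real obstacle here. The one point needing care is to justify that $\lambda_{\max}$ of each finite matrix coincides with its norm, or quadratic-form supremum, which is exactly what the Gram/positivity observation provides. A secondary check is that $P_NB_{\sigma,\boldsymbol r}P_N$ has the displayed entries, which is immediate from the matrix of $B_{\sigma,\boldsymbol r}$.
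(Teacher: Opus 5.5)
Your proposal is correct and follows the paper's proof. The right inequality is the same compression argument, $\|P_NB_{\sigma,\boldsymbol r}P_N\|\le\|B_{\sigma,\boldsymbol r}\|$. Your Gram-matrix realization through the column vectors $v_j\in\ell^2(\mathbb N)$ is the same factorization the paper writes row-wise as $\sum_{n}v_n^{(N)}\otimes v_n^{(N)}$. In both, the tail $n>M$ supplies the Loewner comparison $0\le B^{(N,M)}_{\sigma,\boldsymbol r}\le B^{(N)}_{\sigma,\boldsymbol r}$, which gives the left and middle inequalities.
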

\begin{proof}
Since $B_{\sigma,\boldsymbol{r}}$ is Hilbert-space positive, its compression
$B_{\sigma,\boldsymbol{r}}^{(N)}
= P_NB_{\sigma,\boldsymbol{r}}P_N$
is a positive selfadjoint matrix. Hence
\[
\lambda_N(\sigma,\boldsymbol{r})
=
\|B_{\sigma,\boldsymbol{r}}^{(N)}\|
\geq 0.
\]
Moreover,
\[
\|B_{\sigma,\boldsymbol{r}}^{(N)}\|
=
\|P_NB_{\sigma,\boldsymbol{r}}P_N\|
\leq
\|B_{\sigma,\boldsymbol{r}}\|
=
\|C_{\varphi}\|^{2}.
\]
Now fix $N,M\in\mathbb{N}.$ For $n\in\mathbb{N},$ define
$v_n^{(N)}(\sigma,\boldsymbol{r})\in\mathbb{C}^{N}$
by
\[
(v_n^{(N)}(\sigma,\boldsymbol{r}))_j
=
n^{-\sigma}a_j(\boldsymbol{r})(\log n)^j,
\qquad
0\leq j\leq N-1.
\]
A direct computation gives
\[
B_{\sigma,\boldsymbol{r}}^{(N)}
=
\sum_{n=1}^{\infty}
v_n^{(N)}(\sigma,\boldsymbol{r})\otimes v_n^{(N)}(\sigma,\boldsymbol{r})
\]
and
\[
B_{\sigma,\boldsymbol{r}}^{(N,M)}
=
\sum_{n=1}^{M}
v_n^{(N)}(\sigma,\boldsymbol{r})\otimes v_n^{(N)}(\sigma,\boldsymbol{r}).
\]
Since each rank-one term is positive semidefinite,
\[
0\leq
B_{\sigma,\boldsymbol{r}}^{(N,M)}
\leq
B_{\sigma,\boldsymbol{r}}^{(N)}.
\] Hence
\[
0\leq
\lambda_{N,M}(\sigma,\boldsymbol{r})
\leq
\lambda_N(\sigma,\boldsymbol{r}).
\]
\end{proof}

\begin{lemma}
\label{lem:finite-prime-total-degree-tail-domination}
Let $\varphi=\sigma+\sum_{j=1}^{d}r_jp_j^{-s}$   satisfying  \eqref{eq:fixed-sigma}. For $N\in\mathbb{N},$  $Q_N:\ell^{2}(\mathbb{N}_{0}^{d})\to\ell^{2}(\mathbb{N}_{0}^{d})$
denotes the projection onto the multi-indices of total degree less than \(N\):
\[
Q_N(\ell^{2}(\mathbb{N}_{0}^{d}))
=
\overline{\operatorname{span}}
\{e_{\alpha}:|\alpha|\leq N-1\}.
\]
Then,
\[
0\leq
\|C_{\varphi}\|^{2}
-
\lambda_N(\sigma,\boldsymbol{r})
\leq
\left\|
(T_{\sigma,\boldsymbol{r}}^{+})^{*}(I-Q_N)T_{\sigma,\boldsymbol{r}}^{+}
\right\|.
\]
\end{lemma}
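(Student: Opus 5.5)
The plan is to work on the input side, with the positive operator $S^+=(T^+_{\sigma,\boldsymbol r})^*T^+_{\sigma,\boldsymbol r}$ on $\ell^2(\mathbb N)$. We have
\[
\|C_\varphi\|^2=\|T^+_{\sigma,\boldsymbol r}\|^2=\|S^+\|,
\]
and $S^+$ splits as
\[
S^+=(T^+)^*Q_NT^+ +(T^+)^*(I-Q_N)T^+ ,
\]
where both summands are positive because $Q_N$ and $I-Q_N$ are orthogonal projections. The lower inequality $0\le\|C_\varphi\|^2-\lambda_N$ is already contained in Lemma~\ref{lem:finite-section-lower-bound}, so only the upper bound needs an argument.

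The key identification is
\[
\|(T^+)^*Q_NT^+\|=\|Q_NT^+\|^2=\|Q_NT^+(T^+)^*Q_N\|=\|Q_NA^+_{\sigma,\boldsymbol r}Q_N\|.
\]
I then claim that this compression equals $\lambda_N(\sigma,\boldsymbol r)$. To see this, I would reuse the unitary $U$ from the proof of Proposition~\ref{prop:finite-prime-total-degree-reduction}, defined by $Ue_M=\omega_M$. Each $\omega_M$ is supported on the layer $|\alpha|=M$, so $Q_N$ commutes with the orthogonal decomposition $\mathcal M\oplus\mathcal M^\perp$. On $\mathcal M$ it acts as $UP_NU^*$, and it maps $\mathcal M^\perp$ into itself.

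On $\mathcal M^\perp$ the operator $A^+$ vanishes. Since $Q_N$ preserves $\mathcal M^\perp$, the compression $Q_NA^+Q_N$ also vanishes there. Hence $Q_NA^+Q_N$ is unitarily equivalent to $P_NB_{\sigma,\boldsymbol r}P_N\oplus\mathbf 0$, and its norm is $\|B^{(N)}_{\sigma,\boldsymbol r}\|=\lambda_N$.

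The conclusion follows from the triangle inequality:
\[
\|C_\varphi\|^2=\|S^+\|\le \|(T^+)^*Q_NT^+\|+\|(T^+)^*(I-Q_N)T^+\|=\lambda_N+\|(T^+)^*(I-Q_N)T^+\|.
\]

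The one step that needs real care is that $Q_N$ preserves $\mathcal M^\perp$. I expect this to be the main obstacle. It holds because $Q_N$ is a sum of the layer projections $\Pi_M$ (onto $|\alpha|=M$) for $M<N$. For $x\perp\mathcal M$, each $\Pi_Mx$ is still orthogonal to every $\omega_K$: when $K\ne M$ the supports are disjoint, and when $K=M$ we have $\langle\Pi_Mx,\omega_M\rangle=\langle x,\omega_M\rangle=0$.

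A secondary point is the case $R=0$. There only the degree-zero layer is nonzero and everything is trivial, so it can be handled directly as in the earlier proposition.
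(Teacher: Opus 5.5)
Your proof is correct and follows the paper's argument: split $\|T^+_{\sigma,\boldsymbol r}x\|^2$ (equivalently $S^+$) along $Q_N$ and $I-Q_N$, bound by the sum of the two norms, identify $\|(I-Q_N)T^+_{\sigma,\boldsymbol r}\|^2$ with the stated tail norm, and identify $\|Q_NT^+_{\sigma,\boldsymbol r}\|^2$ with $\lambda_N(\sigma,\boldsymbol r)$. The paper only asserts that last identification ("the total-degree reduction identifies"), whereas you actually verify it by showing $Q_NA^+_{\sigma,\boldsymbol r}Q_N\simeq P_NB_{\sigma,\boldsymbol r}P_N\oplus\mathbf 0$; the invariance of $\mathcal M^\perp$ under $Q_N$ that you flag as the main obstacle is in fact automatic, since $Q_N$ is self-adjoint and maps each $\omega_M$ to $\omega_M$ or $0$.
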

\begin{proof}
For every $x\in\ell^{2}(\mathbb{N}),$ the orthogonal decomposition
\[
T_{\sigma,\boldsymbol{r}}^{+}x
=
Q_NT_{\sigma,\boldsymbol{r}}^{+}x
+
(I-Q_N)T_{\sigma,\boldsymbol{r}}^{+}x
\]
gives
\[
\|T_{\sigma,\boldsymbol{r}}^{+}x\|^{2}
=
\|Q_NT_{\sigma,\boldsymbol{r}}^{+}x\|^{2}
+
\|(I-Q_N)T_{\sigma,\boldsymbol{r}}^{+}x\|^{2}.
\]
Taking the supremum over $\|x\|=1$, gives
\[
\|T_{\sigma,\boldsymbol{r}}^{+}\|^{2}
\leq
\|Q_NT_{\sigma,\boldsymbol{r}}^{+}\|^{2}
+
\|(I-Q_N)T_{\sigma,\boldsymbol{r}}^{+}\|^{2}.
\]
The total-degree reduction identifies
$\|Q_NT_{\sigma,\boldsymbol{r}}^{+}\|^{2}$ with $\lambda_N(\sigma,\boldsymbol{r}).$ Furthermore,
\[
\|(I-Q_N)T_{\sigma,\boldsymbol{r}}^{+}\|^{2}
=
\left\|
(T_{\sigma,\boldsymbol{r}}^{+})^{*}(I-Q_N)T_{\sigma,\boldsymbol{r}}^{+}
\right\|.
\]
Since $T_{\sigma,\boldsymbol{r}}^{+}$ is unitarily equivalent to  $T_{\sigma,\boldsymbol{r}}$,
$\|C_{\varphi}\|^{2}
= \|T_{\sigma,\boldsymbol{r}}^{+}\|^{2}.$
Thus
\[
\|C_{\varphi}\|^{2}
-
\lambda_N(\sigma,\boldsymbol{r})
\leq
\left\|
(T_{\sigma,\boldsymbol{r}}^{+})^{*}(I-Q_N)T_{\sigma,\boldsymbol{r}}^{+}
\right\|.
\]
\end{proof}
\begin{lemma}
\label{lem:finite-prime-output-tail-rank-one-expansion}
Fix $\sigma,\boldsymbol{r}$ satisfying \eqref{eq:fixed-sigma}. Then, we have
\[
(T_{\sigma,\boldsymbol{r}}^{+})^{*}(I-Q_N)T_{\sigma,\boldsymbol{r}}^{+}
=
\sum_{|\alpha|\geq N}
\boldsymbol{r}^{2\alpha}(u_{\alpha}\otimes u_{\alpha}),
\]
where the series converges absolutely in operator norm.
\end{lemma}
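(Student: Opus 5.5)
The plan is to reduce the identity to the rank-one expansion already obtained in Lemma~\ref{lem:finite-prime-input-rank-one-expansion}, applied to the positive operator $T^+_{\sigma,\boldsymbol r}$ instead of $T_{\sigma,\boldsymbol r}$.

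First, I record the coordinates of $T^+_{\sigma,\boldsymbol r}$. With $u_\alpha=(n^{-\sigma}(\log n)^{|\alpha|}/\alpha!)_{n\ge1}$ as in Section~\ref{sec:4}, the definition gives
\[
(T^+_{\sigma,\boldsymbol r}a)_\alpha=\boldsymbol r^\alpha\langle a,u_\alpha\rangle
\]
for $a\in\ell^2(\mathbb N)$. Since $u_\alpha$ is real and $\boldsymbol r^\alpha\ge0$, this has no sign factor, and $u_\alpha\in\ell^2(\mathbb N)$ by Lemma~\ref{lem:finite-prime-input-rank-one-expansion}. The projection $I-Q_N$ keeps exactly the coordinates with $|\alpha|\ge N$, so
\[
(I-Q_N)T^+_{\sigma,\boldsymbol r}a=\sum_{|\alpha|\ge N}\boldsymbol r^\alpha\langle a,u_\alpha\rangle e_\alpha .
\]

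Next, I compute the quadratic form rather than the adjoint directly; this avoids any interchange issue. For $a,b\in\ell^2(\mathbb N)$,
\[
\langle (T^+)^*(I-Q_N)T^+a,b\rangle=\langle (I-Q_N)T^+a,(I-Q_N)T^+b\rangle=\sum_{|\alpha|\ge N}\boldsymbol r^{2\alpha}\langle a,u_\alpha\rangle\overline{\langle b,u_\alpha\rangle}.
\]
This uses that $I-Q_N$ is a self-adjoint idempotent. The sum on the right converges absolutely, because it is an inner product of two $\ell^2(\mathbb N_0^d)$ vectors. Its right-hand side equals $\langle Xa,b\rangle$, where $X=\sum_{|\alpha|\ge N}\boldsymbol r^{2\alpha}(u_\alpha\otimes u_\alpha)$, provided $X$ is a well-defined bounded operator.

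The main step is the absolute operator-norm convergence of $X$. I take it from estimate \eqref{eq: summation bound}: choose $\varepsilon$ with $2R<\varepsilon<2\sigma-1$, which is possible by \eqref{eq:fixed-sigma}. Then
\[
\sum_{|\alpha|=m}\boldsymbol r^{2\alpha}\|u_\alpha\otimes u_\alpha\|\le\zeta(2\sigma-\varepsilon)(4R^2/\varepsilon^2)^m,
\]
with $4R^2/\varepsilon^2<1$. Summing over $m\ge N$ gives a convergent geometric tail, bounded by $\zeta(2\sigma-\varepsilon)(4R^2/\varepsilon^2)^N/(1-4R^2/\varepsilon^2)$. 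So $X$ converges absolutely in operator norm, and the weak-form identity of the previous paragraph identifies the two operators.

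I expect no real obstacle here, since it is a restriction of the already-proved full expansion to the tail indices. The only point needing care is that the tail sum is indexed by the full multi-index set $|\alpha|\ge N$, so it is unconditionally summable because all terms are norm-summable. The tail bound above is also the one that will later give the $E_N$ term.
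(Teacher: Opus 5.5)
Your proposal is correct and follows essentially the paper's argument. You identify the quadratic form of $(T^+_{\sigma,\boldsymbol r})^*(I-Q_N)T^+_{\sigma,\boldsymbol r}$ coordinatewise as $\sum_{|\alpha|\ge N}\boldsymbol r^{2\alpha}\langle a,u_\alpha\rangle\langle u_\alpha,b\rangle$, and you obtain absolute operator-norm convergence from the degree-$m$ bound in Lemma~\ref{lem:finite-prime-input-rank-one-expansion}. The only cosmetic difference is that you verify the form identity for all $a,b\in\ell^2(\mathbb N)$ (legitimate, since Proposition~\ref{prop:fp-coefficient-realization} shows the bounded extension is given by the same coefficient formula), whereas the paper checks it on finitely supported vectors.
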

\begin{proof}
Recall that for $\alpha\in\mathbb{N}_{0}^{d},$
\[
(u_{\alpha})_n
=
n^{-\sigma}
\frac{(\log n)^{|\alpha|}}{\alpha!},
\qquad
n\in\mathbb{N}.
\]
For finitely supported \(a\), the \(\alpha\)-coordinate of
$T_{\sigma,\boldsymbol{r}}^{+}a$ is
\[
(T_{\sigma,\boldsymbol{r}}^{+}a)_{\alpha}
=
\boldsymbol{r}^{\alpha}\langle a,u_{\alpha}\rangle.
\]
The projection $I-Q_N$ keeps exactly those coordinates with
$|\alpha|\geq N.$
Therefore, for finitely supported \(a,c\in\ell^{2}(\mathbb{N})\),
\[
\left\langle
(T_{\sigma,\boldsymbol{r}}^{+})^{*}(I-Q_N)T_{\sigma,\boldsymbol{r}}^{+}a,c
\right\rangle
=
\sum_{|\alpha|\geq N}
\boldsymbol{r}^{2\alpha}
\langle a,u_{\alpha}\rangle
\langle u_{\alpha},c\rangle= \sum_{|\alpha|\geq N}
\langle \boldsymbol{r}^{2\alpha}(u_\alpha\otimes u_\alpha)a,c \rangle.
\]
Thus
\[
(T_{\sigma,\boldsymbol{r}}^{+})^{*}(I-Q_N)T_{\sigma,\boldsymbol{r}}^{+}
=
\sum_{|\alpha|\geq N}
\boldsymbol{r}^{2\alpha}(u_{\alpha}\otimes u_{\alpha})
\]
on finitely supported vectors and the convergence assertion follows from the Lemma \ref{lem:finite-prime-input-rank-one-expansion}.
\end{proof}

\begin{theorem}
\label{thm:finite-section}
Fix $\varphi=\sigma+\sum_{j=1}^{d}r_jp_j^{-s}$ satisfying  \eqref{eq:fixed-sigma}.
If $2R<\varepsilon<2\sigma-1$ then for $N\in\mathbb{N},$
\[
0\leq
\|C_{\varphi}\|^{2}
-
\lambda_N(\sigma,\boldsymbol{r})
\leq
E_N(\sigma,\boldsymbol{r},\varepsilon),
\]
where
\[
E_N(\sigma,\boldsymbol{r},\varepsilon)
=
\zeta(2\sigma-\varepsilon)
\frac{(4R^{2}/\varepsilon^{2})^{N}}{1-4R^{2}/\varepsilon^{2}}.
\]
\end{theorem}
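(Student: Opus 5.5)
The plan is to chain the three preceding lemmas and then sum a geometric tail. The lower bound $0\le \|C_\varphi\|^2-\lambda_N(\sigma,\boldsymbol r)$ is already contained in Lemma~\ref{lem:finite-section-lower-bound}, so only the upper bound needs work.

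For the upper bound, we start from Lemma~\ref{lem:finite-prime-total-degree-tail-domination}:
\[
\|C_\varphi\|^2-\lambda_N(\sigma,\boldsymbol r)\le \bigl\|(T^+_{\sigma,\boldsymbol r})^*(I-Q_N)T^+_{\sigma,\boldsymbol r}\bigr\|.
\]
Lemma~\ref{lem:finite-prime-output-tail-rank-one-expansion} identifies the right-hand side operator with $\sum_{|\alpha|\ge N}\boldsymbol r^{2\alpha}(u_\alpha\otimes u_\alpha)$, and this series converges absolutely in operator norm. The triangle inequality together with $\|u_\alpha\otimes u_\alpha\|=\|u_\alpha\|^2$ then bounds the norm by
\[
\sum_{m\ge N}\sum_{|\alpha|=m}\boldsymbol r^{2\alpha}\|u_\alpha\|^2 .
\]

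Next we invoke the layer estimate \eqref{eq: summation bound} from the proof of Lemma~\ref{lem:finite-prime-input-rank-one-expansion}. That estimate holds for any $\varepsilon$ with $2R<\varepsilon<2\sigma-1$, since only $\varepsilon<2\sigma-1$ was used to make $\zeta(2\sigma-\varepsilon)$ finite, along with \eqref{eq:exp-inequality} and $\binom{2m}{m}\le 4^m$. It gives, for every $m$,
\[
\sum_{|\alpha|=m}\boldsymbol r^{2\alpha}\|u_\alpha\|^2\le \zeta(2\sigma-\varepsilon)\,q^m,\qquad q:=\frac{4R^2}{\varepsilon^2}.
\]
Because $\varepsilon>2R$, we have $q<1$, so summing the geometric series from $m=N$ gives $\zeta(2\sigma-\varepsilon)\,q^N/(1-q)=E_N(\sigma,\boldsymbol r,\varepsilon)$.

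The only real point to check is that the identification in Lemma~\ref{lem:finite-prime-total-degree-tail-domination} of $\|Q_NT^+_{\sigma,\boldsymbol r}\|^2$ with $\lambda_N$ is consistent with $B^{(N)}_{\sigma,\boldsymbol r}=P_NB_{\sigma,\boldsymbol r}P_N$. This holds because $Q_N$ corresponds to $P_N$ under the total-degree reduction of Proposition~\ref{prop:finite-prime-total-degree-reduction}: $Q_N$ commutes with that reduction, as the vectors $\omega_j$ are supported on single degree layers. Since that lemma is already stated, I take it as given. The case $R=0$ is trivial, since then $E_N=0$ for $N\ge1$ and the operator has rank one supported in degree zero.
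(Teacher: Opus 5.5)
Your proposal is correct and is essentially the paper's proof. It combines the tail-domination lemma with the rank-one expansion of $(T^+_{\sigma,\boldsymbol r})^*(I-Q_N)T^+_{\sigma,\boldsymbol r}$, applies the triangle inequality with $\|u_\alpha\otimes u_\alpha\|=\|u_\alpha\|^2$, uses the layer bound $\sum_{|\alpha|=m}\boldsymbol r^{2\alpha}\|u_\alpha\|^2\le\zeta(2\sigma-\varepsilon)(4R^2/\varepsilon^2)^m$, and sums the geometric tail from $m=N$. Your two added remarks correctly justify points the paper only asserts: that the layer bound holds for every admissible $\varepsilon$, and that $Q_N$ corresponds to $P_N$ under the total-degree reduction because each $\omega_j$ is supported on a single degree layer, so that $\|Q_NT^+_{\sigma,\boldsymbol r}\|^2=\lambda_N(\sigma,\boldsymbol r)$.
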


\begin{proof}

If \(R=0\), then \(B_{\sigma,\boldsymbol0}\) is rank one and every
finite section with \(N\geq1\) is exact. Hence the assertion is immediate.
Assume from now on that \(R>0\).
By Lemma \ref{lem:finite-prime-total-degree-tail-domination} and \ref{lem:finite-prime-output-tail-rank-one-expansion},
\[
0\leq
\|C_{\varphi}\|^{2}
-
\lambda_N(\sigma,\boldsymbol{r})
\leq
\left\|
\sum_{|\alpha|\geq N}
\boldsymbol{r}^{2\alpha}(u_{\alpha}\otimes u_{\alpha})
\right\|.
\]
Thus
\[
0\leq
\|C_{\varphi}\|^{2}
-
\lambda_N(\sigma,\boldsymbol{r})
\leq
\sum_{m=N}^{\infty}
\sum_{|\alpha|=m}
\boldsymbol{r}^{2\alpha}\|u_{\alpha}\|^{2}.
\]
From equation \eqref{eq: summation bound}, we have
\[
\sum_{|\alpha|=m}
\boldsymbol{r}^{2\alpha}\|u_{\alpha}\|^{2}
\leq
\zeta(2\sigma-\varepsilon)
\left(\frac{4R^2}{\varepsilon^2}\right)^m.
\]
Therefore
\[
0\leq
\|C_{\varphi}\|^{2}
-
\lambda_N(\sigma,\boldsymbol{r})
\leq
\zeta(2\sigma-\varepsilon)
\sum_{m=N}^{\infty}
\left(\frac{4R^2}{\varepsilon^2}\right)^m.
\]
Since $2R<\varepsilon,$ we have
$0\leq \frac{4R^2}{\varepsilon^2}<1.$ Thus the geometric sum gives
\[
\sum_{m=N}^{\infty}
\left(\frac{4R^2}{\varepsilon^2}\right)^m
=
\frac{(4R^{2}/\varepsilon^{2})^{N}}{1-4R^{2}/\varepsilon^{2}}.
\]
This proves the theorem.
\end{proof}
The preceding theorem shows that $\|C_\varphi\|^2$ admits
finite-dimensional approximations with an explicit error bound. However, the entries of these finite matrices still
involve infinite Dirichlet-series sums. We now turn to the truncation of
these sums. The following lemma provides the estimate needed for this
additional approximation.
\begin{lemma}
\label{lem:finite-prime-universal}
Fix $\sigma,\boldsymbol{r}$ satisfying \eqref{eq:fixed-sigma}.
For $N,M\in\mathbb{N}$ and
$2R<\varepsilon<2\sigma-1,$ we have
\[
0\leq
\lambda_N(\sigma,\boldsymbol{r})
-
\lambda_{N,M}(\sigma,\boldsymbol{r})
\leq
\eta_{N,M}(\sigma,\boldsymbol{r},\varepsilon),
\]
where
\[
\eta_{N,M}(\sigma,\boldsymbol{r},\varepsilon)
=
\left(
\sum_{j=0}^{N-1}
\frac{(2j)!}{\varepsilon^{2j}}a_j(\boldsymbol{r})^{2}
\right)
\frac{M^{-(2\sigma-\varepsilon-1)}}{2\sigma-\varepsilon-1}.
\]
\end{lemma}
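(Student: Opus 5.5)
The lower bound $0\le\lambda_N-\lambda_{N,M}$ is already contained in Lemma~\ref{lem:finite-section-lower-bound}, so only the upper bound needs proof. I will use the rank-one representation from that proof:
\[
B^{(N)}_{\sigma,\boldsymbol r}-B^{(N,M)}_{\sigma,\boldsymbol r}=\sum_{n>M}v_n^{(N)}\otimes v_n^{(N)}.
\]
This difference is a positive semidefinite $N\times N$ matrix. For positive matrices $0\le X\le Y$, Weyl monotonicity gives
\[
\lambda_{\max}(Y)\le \lambda_{\max}(X)+\|Y-X\|.
\]
Hence
\[
\lambda_N-\lambda_{N,M}\le \Bigl\|\sum_{n>M}v_n^{(N)}\otimes v_n^{(N)}\Bigr\|\le \operatorname{tr}\sum_{n>M}v_n^{(N)}\otimes v_n^{(N)}=\sum_{n>M}\|v_n^{(N)}\|^2 .
\]
I bound the norm by the trace because the trace splits into the required product form.

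The trace is
\[
\sum_{j=0}^{N-1}a_j(\boldsymbol r)^2\sum_{n>M}n^{-2\sigma}(\log n)^{2j}.
\]
For each $j$, I apply the exponential inequality \eqref{eq:exp-inequality} with $m=j$:
\[
(\log n)^{2j}\le \frac{(2j)!}{\varepsilon^{2j}}\,n^{\varepsilon}.
\]
This reduces every $j$-term to $\frac{(2j)!}{\varepsilon^{2j}}a_j(\boldsymbol r)^2\sum_{n>M}n^{-(2\sigma-\varepsilon)}$. The exponent satisfies $s:=2\sigma-\varepsilon>1$ because $\varepsilon<2\sigma-1$.

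It remains to bound the Dirichlet tail. Since $x\mapsto x^{-s}$ is decreasing,
\[
\sum_{n>M}n^{-s}\le\int_M^\infty x^{-s}\,dx=\frac{M^{-(s-1)}}{s-1}=\frac{M^{-(2\sigma-\varepsilon-1)}}{2\sigma-\varepsilon-1}.
\]
This factor does not depend on $j$, so it factors out of the sum over $j$ and yields exactly $\eta_{N,M}(\sigma,\boldsymbol r,\varepsilon)$.

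I expect no serious obstacle. The only points to check are that the infinite rank-one sum defining $B^{(N)}_{\sigma,\boldsymbol r}$ converges, which holds entrywise because each $m_k(\sigma)$ is finite, and that the eigenvalue comparison is applied to positive matrices. The hypothesis $2R<\varepsilon$ is not needed for this step; it is kept only so that the statement can later be combined with Theorem~\ref{thm:finite-section}.
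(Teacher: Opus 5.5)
Your proposal is correct and follows the paper's proof essentially step for step. You compare $\lambda_N-\lambda_{N,M}$ with $\|B^{(N)}_{\sigma,\boldsymbol r}-B^{(N,M)}_{\sigma,\boldsymbol r}\|$, write the difference as $\sum_{n>M}v_n^{(N)}\otimes v_n^{(N)}$, bound its norm by $\sum_{n>M}\|v_n^{(N)}\|^2$, and finish with \eqref{eq:exp-inequality} and the integral tail bound. The only difference is cosmetic: you pass through the trace of the positive tail rather than the triangle inequality over the rank-one terms, and both give the same quantity. Note also that $2R<\varepsilon$ is not entirely idle here, since with $R\ge0$ it is what guarantees $\varepsilon>0$ in \eqref{eq:exp-inequality}.
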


\begin{proof}
From Lemma \ref{lem:finite-section-lower-bound}, we have
\[
0\leq
\lambda_N(\sigma,\boldsymbol{r})
-
\lambda_{N,M}(\sigma,\boldsymbol{r}).
\]
Also,
\[
\lambda_N(\sigma,\boldsymbol{r})
-
\lambda_{N,M}(\sigma,\boldsymbol{r})
\leq
\left\|
B_{\sigma,\boldsymbol{r}}^{(N)}
-
B_{\sigma,\boldsymbol{r}}^{(N,M)}
\right\|.
\]
Recall $B_{\sigma,\boldsymbol{r}}^{(N)}$ and $
B_{\sigma,\boldsymbol{r}}^{(N,M)}$ from Lemma \ref{lem:finite-section-lower-bound}, then its difference is
\[
B_{\sigma,\boldsymbol{r}}^{(N)}
-
B_{\sigma,\boldsymbol{r}}^{(N,M)}
=
\sum_{n=M+1}^{\infty}
v_n^{(N)}(\sigma,\boldsymbol{r})\otimes v_n^{(N)}(\sigma,\boldsymbol{r}).
\]
Thus
\[
\left\|
B_{\sigma,\boldsymbol{r}}^{(N)}
-
B_{\sigma,\boldsymbol{r}}^{(N,M)}
\right\|
\leq
\sum_{n=M+1}^{\infty}
\|v_n^{(N)}(\sigma,\boldsymbol{r})\|_{\mathbb{C}^{N}}^{2}.
\]
Now calculate the norm of  $v_n^{(N)}(\sigma,\boldsymbol{r})$
\[
\|v_n^{(N)}(\sigma,\boldsymbol{r})\|_{\mathbb{C}^{N}}^{2}
=
n^{-2\sigma}
\sum_{j=0}^{N-1}
a_j(\boldsymbol{r})^{2}(\log n)^{2j}.
\]
Therefore
\[
\left\|
B_{\sigma,\boldsymbol{r}}^{(N)}
-
B_{\sigma,\boldsymbol{r}}^{(N,M)}
\right\|
\leq
\sum_{j=0}^{N-1}
a_j(\boldsymbol{r})^{2}
\sum_{n=M+1}^{\infty}
n^{-2\sigma}(\log n)^{2j}.
\]
From Equation \eqref{eq:exp-inequality}, we have
\[
(\log n)^{2j}
\leq
\frac{(2j)!}{\varepsilon^{2j}}n^{\varepsilon}.
\]
Hence
\[
\sum_{n=M+1}^{\infty}
n^{-2\sigma}(\log n)^{2j}
\leq
\frac{(2j)!}{\varepsilon^{2j}}
\sum_{n=M+1}^{\infty}
n^{-(2\sigma-\varepsilon)}.
\]
Since $2\sigma-\varepsilon>1,$ the tail estimate for zeta function gives
\[
\sum_{n=M+1}^{\infty}
n^{-(2\sigma-\varepsilon)}
\leq
\frac{M^{-(2\sigma-\varepsilon-1)}}{2\sigma-\varepsilon-1}.
\]
Combining these estimates yields
\[
\lambda_N(\sigma,\boldsymbol{r})
-
\lambda_{N,M}(\sigma,\boldsymbol{r})
\leq
\left(
\sum_{j=0}^{N-1}
\frac{(2j)!}{\varepsilon^{2j}}a_j(\boldsymbol{r})^{2}
\right)
\frac{M^{-(2\sigma-\varepsilon-1)}}{2\sigma-\varepsilon-1}.
\]

\end{proof}

\begin{theorem}\label{thm:full-error-estimate}
Fix $\varphi=\sigma+\sum_{j=1}^{d}r_jp_j^{-s}$ satisfying  \eqref{eq:fixed-sigma}.
If $2R<\varepsilon<2\sigma-1,$ then
for $N,M\in\mathbb{N}$
\[
0\leq
\|C_{\varphi}\|^{2}
-
\lambda_{N,M}(\sigma,\boldsymbol{r})
\leq
E_N(\sigma,\boldsymbol{r},\varepsilon)
+
\eta_{N,M}(\sigma,\boldsymbol{r},\varepsilon),
\]
where
\[
E_N(\sigma,\boldsymbol{r},\varepsilon)
=
\zeta(2\sigma-\varepsilon)
\frac{(4R^{2}/\varepsilon^{2})^{N}}{1-4R^{2}/\varepsilon^{2}},
\] and
\[
\eta_{N,M}(\sigma,\boldsymbol{r},\varepsilon)
=
\left(
\sum_{j=0}^{N-1}
\frac{(2j)!}{\varepsilon^{2j}}a_j(\boldsymbol{r})^{2}
\right)
\frac{M^{-(2\sigma-\varepsilon-1)}}{2\sigma-\varepsilon-1}.
\]
\end{theorem}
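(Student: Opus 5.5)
The plan is to telescope the error through the intermediate quantity $\lambda_N(\sigma,\boldsymbol r)$ and combine the two estimates already established. We write
\[
\|C_\varphi\|^2-\lambda_{N,M}(\sigma,\boldsymbol r)
=
\bigl(\|C_\varphi\|^2-\lambda_N(\sigma,\boldsymbol r)\bigr)
+
\bigl(\lambda_N(\sigma,\boldsymbol r)-\lambda_{N,M}(\sigma,\boldsymbol r)\bigr).
\]

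For the lower bound we invoke Lemma~\ref{lem:finite-section-lower-bound}. It gives the chain $0\le\lambda_{N,M}\le\lambda_N\le\|C_\varphi\|^2$, and this immediately yields $\|C_\varphi\|^2-\lambda_{N,M}\ge0$.

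For the upper bound we estimate the two brackets separately, under the standing hypothesis $2R<\varepsilon<2\sigma-1$.

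\textbf{First bracket.} Theorem~\ref{thm:finite-section} bounds it by $E_N(\sigma,\boldsymbol r,\varepsilon)$. In the degenerate case $R=0$ the bound is exact, since then $E_N=0$ for $N\ge1$ and the finite section is exact.

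\textbf{Second bracket.} Lemma~\ref{lem:finite-prime-universal} bounds it by $\eta_{N,M}(\sigma,\boldsymbol r,\varepsilon)$, under the same hypothesis on $\varepsilon$.

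Adding the two bounds gives the claimed estimate $E_N+\eta_{N,M}$.

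There is no real obstacle here; the work is bookkeeping. The one point to check is that the hypothesis on $\varepsilon$ is the same in both cited results, so that a single $\varepsilon$ serves both. One should also confirm that $R<\sigma-\tfrac12$ from \eqref{eq:fixed-sigma} guarantees that such an $\varepsilon$ exists.
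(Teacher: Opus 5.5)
Your proposal is correct and follows the same route as the paper: split the error through $\lambda_N(\sigma,\boldsymbol r)$, bound $\|C_\varphi\|^2-\lambda_N$ by $E_N$ via Theorem~\ref{thm:finite-section} and $\lambda_N-\lambda_{N,M}$ by $\eta_{N,M}$ via Lemma~\ref{lem:finite-prime-universal}, then add. The only cosmetic difference is that you take nonnegativity directly from the chain in Lemma~\ref{lem:finite-section-lower-bound}, whereas the paper adds the two lower bounds $0\le\cdot$ from the cited results, which themselves rest on that same chain.
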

\begin{proof}
From Theorem \ref{thm:finite-section}, we have
\[
0\leq
\|C_{\varphi}\|^{2}
-
\lambda_N(\sigma,\boldsymbol{r})
\leq
E_N(\sigma,\boldsymbol{r},\varepsilon).
\]
And by Lemma \ref{lem:finite-prime-universal},
\[
0\leq
\lambda_N(\sigma,\boldsymbol{r})
-
\lambda_{N,M}(\sigma,\boldsymbol{r})
\leq
\eta_{N,M}(\sigma,\boldsymbol{r},\varepsilon).
\]
Adding the two inequalities gives
\[
0\leq
\|C_{\varphi}\|^{2}
-
\lambda_{N,M}(\sigma,\boldsymbol{r})
\leq
E_N(\sigma,\boldsymbol{r},\varepsilon)
+
\eta_{N,M}(\sigma,\boldsymbol{r},\varepsilon).
\]
\end{proof}

\vspace{0.2in}

\noindent\textbf{Data Availability:} No data were used or generated in this theoretical study.

\vspace{0.2in}
\noindent\textbf{Declaration of competing interests:}
The authors declare no competing interests.

\vspace{0.2in}
\noindent\textbf{Acknowledgments:}
X. Fang was partially supported by NSTC, Taiwan (No. 114-2115-M-A49-003-MY3).

\printbibliography
\end{document}